\renewcommand{\theequation}{$\smash{\sharp}\mspace{0.5mu}$\arabic{equation}}
\newcommand{\numberseries}{\mdseries}   
\newlength{\thmtopspace}                
\newlength{\thmbotspace}                
\newlength{\thmheadspace}               
\newlength{\thmindent}                  
\newtheoremstyle{bfupright head,slanted body}
                {\thmtopspace}{\thmbotspace}
                {\slshape}{\thmindent}{\bfseries}{.}{\thmheadspace}
                {{\numberseries \thmnumber{{\bf #2} }}\thmnote{#3}}
\newtheoremstyle{bfupright head,upright body}
                {\thmtopspace}{\thmbotspace}
                {\upshape}{\thmindent}{\bfseries}{.}{\thmheadspace}
                {{\numberseries \thmnumber{{\bf #2} }}\thmnote{#3}}
\newtheoremstyle{bfit head,upright body}
                {\thmtopspace}{\thmbotspace}
                {\upshape}{\thmindent}{\upshape}{.}{\thmheadspace}
                {{\numberseries\thmnumber{{\bf #2} }}
                {\bfseries\itshape\thmnote{\negthickspace#3}}}
\newtheoremstyle{it head,upright body}
                {\thmtopspace}{\thmbotspace}
                {\upshape}{\thmindent}{\upshape}{.}{\thmheadspace}
                {{\numberseries\thmnumber{{\bf #2} }}
                {\itshape\thmnote{\negthickspace#3}}}
\newtheoremstyle{fixed bf head,slanted body}
                {\thmtopspace}{\thmbotspace}{\slshape}
                {\thmindent}{\bfseries}{.}{\thmheadspace}
                {{\numberseries \thmnumber{{\bf #2} }}\thmname{#1}\thmnote{ (#3)}}
\newtheoremstyle{fixed bf head,upright body}
                {\thmtopspace}{\thmbotspace}{\upshape}
                {\thmindent}{\bfseries}{.}{\thmheadspace}
                {{\numberseries \thmnumber{{\bf #2} }}\thmname{#1}\thmnote{ (#3)}}
\newtheoremstyle{independent paragraph}
                {\thmtopspace}{\thmbotspace}
                {\upshape}{\thmindent}{\upshape}{}{0pt}
                {\thmnote{#3 }}
\newtheoremstyle{subparagraph}
                {\thmbotspace}{\thmbotspace}
                {\upshape}{\thmindent}{\upshape}{}{0pt}
                {\thmnote{#3 }}
\newtheoremstyle{notes}
                {\thmtopspace}{\thmbotspace}
                {\ttfamily}{\thmindent}{\ttfamily\small }{}{0pt}
                {\thmnote{#3 }}
\newtheoremstyle{numbered paragraph}
                {\thmtopspace}{\thmbotspace}{\upshape}
                {\thmindent}{\upshape}{}{\thmheadspace}
                {{\numberseries \thmnumber{\bf #2.}}}
\theoremstyle{bfupright head,slanted body}
\newtheorem{res}{}[section]             \newtheorem*{res*}{}
\theoremstyle{bfit head,upright body}
                 \newtheorem*{com*}{}
\theoremstyle{bfupright head,upright body}
\newtheorem{bfhpg}[res]{}               \newtheorem*{bfhpg*}{}
\theoremstyle{it head,upright body}
               \newtheorem*{ithpg*}{}
\theoremstyle{fixed bf head,slanted body}
\newtheorem{thm}[res]{Theorem}          \newtheorem*{thm*}{Theorem}
\newtheorem{prp}[res]{Proposition}      \newtheorem*{prp*}{Proposition}
\newtheorem{cor}[res]{Corollary}        \newtheorem*{cor*}{Corollary}
\newtheorem{lem}[res]{Lemma}            \newtheorem*{lem*}{Lemma}
\theoremstyle{fixed bf head,upright body}
\newtheorem{dfn}[res]{Definition}       \newtheorem*{dfn*}{Definition}
      \newtheorem*{obs*}{Observation}
\newtheorem{rmk}[res]{Remark}           \newtheorem*{rmk*}{Remark}
\newtheorem{exa}[res]{Example}          \newtheorem*{exa*}{Example}
         \newtheorem*{exe*}{Exercise}
\newtheorem{stp}[res]{Setup}            \newtheorem{stp*}{Setup}
         \newtheorem{ntn*}{Notation}
\newtheorem{con}[res]{Construction}     \newtheorem{con*}{Construction}
\theoremstyle{numbered paragraph}
\newtheorem{ipg}[res]{}
\theoremstyle{subparagraph}
\theoremstyle{notes}
\newlength{\thmlistleft}        
\newlength{\thmlistright}       
\newlength{\thmlistpartopsep}   
\newlength{\thmlisttopsep}      
\newlength{\thmlistparsep}      
\newlength{\thmlistitemsep}     
\newcounter{eqc} 
  {\end{list}}%
\newcounter{prt}
\newenvironment{prt}{\begin{list}{\upshape (\alph{prt})}%
    {\usecounter{prt}%
      \setlength{\leftmargin}{\thmlistleft}%
      \setlength{\labelwidth}{\thmlistleft}%
      \setlength{\rightmargin}{\thmlistright}%
      \setlength{\partopsep}{\thmlistpartopsep}%
      \setlength{\topsep}{\thmlisttopsep}%
      \setlength{\parsep}{\thmlistparsep}%
      \setlength{\itemsep}{\thmlistitemsep}}}%
  {\end{list}}%
\newcommand{\prtlbl}[1]{{\upshape(#1)}}
\newcounter{rqm}
\newenvironment{rqm}{\begin{list}{\upshape (\arabic{rqm})}%
    {\usecounter{rqm}%
      \setlength{\leftmargin}{\thmlistleft}%
      \setlength{\labelwidth}{\thmlistleft}%
      \setlength{\rightmargin}{\thmlistright}%
      \setlength{\partopsep}{\thmlistpartopsep}%
      \setlength{\topsep}{\thmlisttopsep}%
      \setlength{\parsep}{\thmlistparsep}%
      \setlength{\itemsep}{\thmlistitemsep}}}%
  {\end{list}}%
\newcommand{\rqmlbl}[1]{{\upshape(#1)}}
\newcounter{rqmm}
  {\end{list}}%
\newenvironment{itemlist}{\nopagebreak \begin{list}{{\small $\bullet$}}%
    {\setlength{\leftmargin}{\thmlistleft}%
      \setlength{\labelwidth}{\thmlistleft}%
      \setlength{\rightmargin}{\thmlistright}%
      \setlength{\partopsep}{\thmlistpartopsep}%
      \setlength{\topsep}{\thmlisttopsep}%
      \setlength{\parsep}{\thmlistparsep}%
      \setlength{\itemsep}{\thmlistitemsep}}}%
  {\end{list}}%
  \newcommand{\proofoftag}[2][:]{(#2)#1}
\newcommand{\pgref}[1]{\ref{#1}}
\renewcommand{\eqref}[1]{(\pgref{eq:#1})}
\newcommand{\corref}[2][Corollary ]{#1\pgref{cor:#2}}
\newcommand{\dfnref}[2][Definition~]{#1\pgref{dfn:#2}}
\newcommand{\exaref}[2][Example ]{#1\pgref{exa:#2}}
\newcommand{\lemref}[2][Lemma ]{#1\pgref{lem:#2}}
\newcommand{\prpref}[2][Proposition ]{#1\pgref{prp:#2}}
\newcommand{\rmkref}[2][Remark ]{#1\pgref{rmk:#2}}
\newcommand{\thmref}[2][Theorem ]{#1\pgref{thm:#2}}
\newcommand{\stpref}[2][Setup~]{#1\pgref{stp:#2}}
\newcommand{\secref}[2][Section ]{#1\pgref{sec:#2}}
\newcommand{\appref}[2][Appendix~]{#1\ref{app:#2}}
\newcommand{\conref}[2][Construction~]{#1\ref{con:#2}}
\def\@nobreak@#1{\mathchoice%
  {\nobreakdef@\displaystyle\f@size{#1}}%
  {\nobreakdef@\nobreakstyle\tf@size{\firstchoice@false #1}}%
  {\nobreakdef@\nobreakstyle\sf@size{\firstchoice@false #1}}%
  {\nobreakdef@\nobreakstyle\ssf@size{\firstchoice@false #1}}%
  \check@mathfonts}%
\def\nobreakdef@#1#2#3{\hbox{{%
                    \everymath{#1}%
                    \let\f@size#2\selectfont%
                    #3}}}%
\DeclareFontFamily{T1}{cmex}{}
\DeclareFontShape{T1}{cmex}{m}{n}{<-> s * [0.89] cmex10}{}
\DeclareSymbolFont{cmlargesymbols}{T1}{cmex}{m}{n}
\DeclareMathSymbol{\mycoprod}{\mathop}{cmlargesymbols}{"60} 
\DeclareMathSymbol{\myprod}{\mathop}{cmlargesymbols}{"51} \let\prod\myprod
\DeclareSymbolFont{usualmathcal}{OMS}{cmsy}{m}{n}
\DeclareSymbolFontAlphabet{\mathcal}{usualmathcal}
\DeclareSymbolFont{letters}{OML}{txmi}{m}{it}
\DeclareMathSymbol{\alpha}{\mathord}{letters}{"0B}
\DeclareMathSymbol{\beta}{\mathord}{letters}{"0C}
\DeclareMathSymbol{\gamma}{\mathord}{letters}{"0D}
\DeclareMathSymbol{\sigma}{\mathord}{letters}{"0E}
\DeclareMathSymbol{\epsilon}{\mathord}{letters}{"0F}
\DeclareMathSymbol{\zeta}{\mathord}{letters}{"10}
\DeclareMathSymbol{\eta}{\mathord}{letters}{"11}
\DeclareMathSymbol{\theta}{\mathord}{letters}{"12}
\DeclareMathSymbol{\iota}{\mathord}{letters}{"13}
\DeclareMathSymbol{\kappa}{\mathord}{letters}{"14}
\DeclareMathSymbol{\lambda}{\mathord}{letters}{"15}
\DeclareMathSymbol{\mu}{\mathord}{letters}{"16}
\DeclareMathSymbol{\nu}{\mathord}{letters}{"17}
\DeclareMathSymbol{\xi}{\mathord}{letters}{"18}
\DeclareMathSymbol{\pi}{\mathord}{letters}{"19}
\DeclareMathSymbol{\rho}{\mathord}{letters}{"1A}
\DeclareMathSymbol{\sigma}{\mathord}{letters}{"1B}
\DeclareMathSymbol{\tau}{\mathord}{letters}{"1C}
\DeclareMathSymbol{\upsilon}{\mathord}{letters}{"1D}
\DeclareMathSymbol{\phi}{\mathord}{letters}{"1E}
\DeclareMathSymbol{\chi}{\mathord}{letters}{"1F}
\DeclareMathSymbol{\psi}{\mathord}{letters}{"20}
\DeclareMathSymbol{\omega}{\mathord}{letters}{"21}
\DeclareMathSymbol{\varepsilon}{\mathord}{letters}{"22}
\DeclareMathSymbol{\vartheta}{\mathord}{letters}{"23}
\DeclareMathSymbol{\varpi}{\mathord}{letters}{"24}
\DeclareMathSymbol{\varrho}{\mathord}{letters}{"25}
\DeclareMathSymbol{\varsigma}{\mathord}{letters}{"26}
\DeclareMathSymbol{\varphi}{\mathord}{letters}{"27}
\DeclareMathSymbol{\Gamma}{\mathord}{letters}{"00}
\DeclareMathSymbol{\Delta}{\mathord}{letters}{"01}
\DeclareMathSymbol{\Theta}{\mathord}{letters}{"02}
\DeclareMathSymbol{\Lambda}{\mathord}{letters}{"03}
\DeclareMathSymbol{\Xi}{\mathord}{letters}{"04}
\DeclareMathSymbol{\Pi}{\mathord}{letters}{"05}
\DeclareMathSymbol{\Sigma}{\mathord}{letters}{"06}
\DeclareMathSymbol{\Upsilon}{\mathord}{letters}{"07}
\DeclareMathSymbol{\Phi}{\mathord}{letters}{"08}
\DeclareMathSymbol{\Psi}{\mathord}{letters}{"09}
\DeclareMathSymbol{\Omega}{\mathord}{letters}{"0A}
\DeclareMathSymbol{\upGamma}{\mathalpha}{operators}{"00}
\DeclareMathSymbol{\upDelta}{\mathalpha}{operators}{"01}
\DeclareMathSymbol{\upTheta}{\mathalpha}{operators}{"02}
\DeclareMathSymbol{\upLambda}{\mathalpha}{operators}{"03}
\DeclareMathSymbol{\upXi}{\mathalpha}{operators}{"04}
\DeclareMathSymbol{\upPi}{\mathalpha}{operators}{"05}
\DeclareMathSymbol{\upSigma}{\mathalpha}{operators}{"06}
\DeclareMathSymbol{\upUpsilon}{\mathalpha}{operators}{"07}
\DeclareMathSymbol{\upPhi}{\mathalpha}{operators}{"08}
\DeclareMathSymbol{\upPsi}{\mathalpha}{operators}{"09}
\DeclareMathSymbol{\upOmega}{\mathalpha}{operators}{"0A}
\DeclareMathAlphabet\PazoBB{U}{fplmbb}{m}{n}%
\newcommand{\cofr}{\mathscr{Q}}
\newcommand{\fibr}{\mathscr{R}}
\newcommand{\cpx}[1]{#1_{\scriptscriptstyle{\bullet}}}
\newcommand{\uHom}[3]{\underline{\operatorname{Hom}}_{#1}(#2,#3)}
\newcommand{\Hom}[3]{\operatorname{Hom}_{#1}(#2,#3)}
\newcommand{\Ext}[4]{\operatorname{Ext}_{#1}^{#2}(#3,#4)}
\newcommand{\Tor}[4]{\operatorname{Tor}^{#1}_{#2}(#3,#4)}
\renewcommand{\Im}[1]{\operatorname{Im}\mspace{1mu}#1}
\newcommand{\Ker}[1]{\operatorname{Ker}\mspace{1mu}#1}
\newcommand{\Coker}[1]{\operatorname{Cok}\mspace{1mu}#1}
\newcommand{\lMod}[1]{{}_{#1}\mspace{-1mu}\operatorname{Mod}}
\newcommand{\rMod}[1]{\operatorname{Mod}_{#1}}
\newcommand{\lGPrj}[1]{{}_{#1}\mspace{-1mu}\operatorname{GPrj}}
\newcommand{\lPrj}[1]{{}_{#1}\mspace{-1mu}\operatorname{Prj}}
\newcommand{\lInj}[1]{{}_{#1}\mspace{-1mu}\operatorname{Inj}}
\newcommand{\Serre}{\mathbb{S}}
\newcommand{\QSD}[2]{\mathcal{D}_{#1}(#2)}
\newcommand{\QSDperf}[2]{\mathcal{D}^\mathrm{perf}_{#1}(#2)}
\newcommand{\stalkco}[1]{S\mspace{-2mu}\langle{#1}\rangle}
\newcommand{\stalkcn}[1]{S\mspace{-2mu}\{#1\}}
\newcommand{\alg}{A}
\newcommand{\vtx}{\text{\small $\bullet$}}
\newcommand{\Cq}[1]{C_{\mspace{-1mu}\smash{#1}}}
\newcommand{\Sq}[1]{S_{\mspace{-4mu}\smash{#1}}}
\newcommand{\Kq}[1]{K_{\smash{#1}}}
\newcommand{\Fq}[1]{F_{\mspace{-5mu}\smash{#1}}}
\newcommand{\Eq}[1]{E_{\smash{#1}}}
\newcommand{\Gq}[1]{G_{\mspace{-1mu}\smash{#1}}}
\newcommand{\ORt}[3][Q]{#2 \otimes_{#1} #3}
\newcommand{\hH}[2][1]{\mathbb{H}_{#1}^{\textnormal{\tiny[}#2\textnormal{\tiny]}}}
\newcommand{\cH}[2][1]{\mathbb{H}^{#1}_{\textnormal{\tiny[}#2\textnormal{\tiny]}}}
\newcommand{\lE}[1]{{}_{#1}\mathscr{E}}
\newcommand{\finsupp}[1]{{}_{#1}\mathscr{\mathcal{K}}}
\newcommand{\supp}[1]{\operatorname{supp}\mspace{1mu}#1}
\newcommand{\Filt}[1]{\operatorname{Filt}(#1)}
\newcommand{\coFilt}[1]{\operatorname{coFilt}(#1)}
\begin{document}

\title{The $Q$-shaped derived category of a ring --- compact and perfect objects}

\author{Henrik Holm \ }

\address{Department of Mathematical Sciences, University of Copenhagen, Universitetsparken~5, 2100 Copenhagen {\O}, Denmark} 
\email{holm@math.ku.dk}

\urladdr{http://www.math.ku.dk/\~{}holm/}

\author{\ Peter J{\o}rgensen}

\address{Department of Mathematics, Aarhus University, Ny Munkegade 118, Building 1530, 8000 Aarhus C, Denmark}
\email{peter.jorgensen@math.au.dk}

\urladdr{https://sites.google.com/view/peterjorgensen}

\keywords{(Co)fibrant objects; compact objects; derived categories; differential modules; Frobenius categories; perfect objects; projective and injective model structures; quivers with relations; stable categories; Zeckendorf expansions.}

\subjclass{16E35, 18G80, 18N40}


\vspace*{-2ex}

\begin{abstract}
  In a previous work we constructed the $Q$-shaped derived category of any ring $\alg$ for any suitably nice category $Q$. The $Q$-shaped derived category of $\alg$, which is denoted by \smash{$\QSD{Q}{\alg}$}, is a generalization of the ordinary derived category. In this paper we prove that the $Q$-shaped derived category of $\alg$ is a compactly generated triangulated category. We also define perfect objects in \smash{$\QSD{Q}{\alg}$} and prove that these constitute a triangulated~subcategory, \smash{$\QSDperf{Q}{\alg}$}, of the category \smash{$\QSD{Q}{\alg}^\mathrm{c}$} of compact objects in the $Q$-shaped derived category. The subcategories \smash{$\QSDperf{Q}{\alg}$} and \smash{$\QSD{Q}{\alg}^\mathrm{c}$} coincide if and only if the former is thick.
\end{abstract}

\maketitle

\section{Introduction}
\label{sec:Introduction}

A chain complex over a ring $\alg$ can be viewed as an $\lMod{\alg}$-valued representation of the quiver
\begin{equation}
\label{eq:Gamma}
  \upGamma \,= \ \xymatrix{
    \cdots \ar[r]^-{\partial} &
    \vtx
    \ar[r]^-{\partial} &
    \vtx
    \ar[r]^-{\partial} &
    \vtx
    \ar[r]^-{\partial} &
    \cdots
  }
\end{equation}
with the relations $\partial^2 = 0$.  Expressed more formally, the category of chain complexes $\operatorname{Ch}\mspace{1mu}\alg$ can be identified with $\lMod{Q,\,\alg}$, the category of additive functors $Q \to \lMod{\alg}$, where $Q$ is the path category of $\upGamma$ with these relations.

A key property of $\upGamma$ is that it is a {\em self-injective} quiver with relations; equivalently, $Q$ has a Serre functor (see \ref{Bbbk}).  We showed in \cite{HJ2021} that many properties of chain complexes can be generalized to $\lMod{Q,\,\alg}$ when $Q$ is another suitable category with a Serre functor.  This permits $Q$ to be the path category of many different quivers with relations.  Among them are cyclic quivers, like:
\begin{equation*}
    \begin{gathered}
    \xymatrix{
      \vtx \ar@(ur,dr)[]
    }
    \end{gathered}
    \hspace*{6ex} , \hspace*{4ex}
    \begin{gathered}
    \xymatrix@R=5ex{
      \vtx \ar@/^0.7pc/[d] 
      \\
      \vtx \ar@/^0.7pc/[u]
    }
    \end{gathered}    
    \hspace*{4ex} , \hspace*{1ex}
    \begin{gathered}
    \xymatrix@C=2.5ex@R=5ex{
      {} & 
      \vtx \ar[dr] 
      & {} 
      \\
      \vtx 
      \ar[ur]
      & {} & 
      \vtx
      \ar[ll]
    }
    \end{gathered}
    \hspace*{1.5ex} , \hspace*{2.5ex}
    \begin{gathered}
    \xymatrix@C=5ex@R=5ex{
      \vtx \ar[r] & 
      \vtx \ar[d] 
      \\
      \vtx \ar[u] & 
      \vtx \ar[l]
    }
    \end{gathered}
    \hspace*{2.5ex} , \hspace*{2ex} \cdots \hspace*{1ex},
\end{equation*}
repetitive quivers, like:
  \begin{equation*}
    \begin{gathered}
    \xymatrix@!=0.1pc{
      \cdots &
      \vtx \ar[dr] & {} & 
      \vtx \ar[dr] & {} & 
      \vtx \ar[dr] & {} & 
      \vtx
      &
      \cdots
      \\
      {} & \mspace{-30mu}\cdots & 
      \vtx 
      \ar[ur]
      \ar[dr] & 
      {} & 
      \vtx 
      \ar[ur] 
      \ar[dr] & 
      {} & 
      \vtx 
      \ar[ur] 
      \ar[dr] & 
      \mspace{30mu}\cdots & {}
      \\
      \cdots & \vtx 
      \ar[ur]
      \ar[dr] & 
      {} & 
      \vtx 
      \ar[ur]
      \ar[dr] & 
      {} & 
      \vtx 
      \ar[ur]
      \ar[dr] & 
      {} &
      \vtx & \cdots
      \\
      {} & \mspace{-30mu}\cdots & 
      \vtx 
      \ar[ur]
      & 
      {} & 
      \vtx 
      \ar[ur]
      & 
      {} & 
      \vtx 
      \ar[ur]
      & 
      \mspace{30mu}\cdots & {}
    }
    \end{gathered}    
  \end{equation*} 
and double quivers, like:
\begin{equation*}
  \xymatrix@C=2.5pc{
    \vtx \ar@<0.6ex>[r] &
    \vtx \ar@<0.6ex>[l] \ar@<0.6ex>[r] &
    \ \cdots \ \ar@<0.6ex>[r] \ar@<0.6ex>[l] &
    \vtx \ar@<0.6ex>[l] \ar@<0.6ex>[r] &
    \vtx \ar@<0.6ex>[l]
  }\,.
\end{equation*}

\noindent
To wit, the following was proved in \cite{HJ2021} when $Q$ has a Serre functor.  
\begin{itemlist}

  \item There is a notion of ``quasi-isomorphism'' in $\lMod{Q,\,\alg}$, which generalizes the usual notion of quasi-isomorphism in  $\operatorname{Ch}\mspace{1mu}\alg$. Inverting the quasi-isomorphisms in $\lMod{Q,\,\alg}$ gives a category $\QSD{Q}{\alg}$, which we call the \emph{$Q$-shaped derived category of $\alg$}.
  
  \item  $\QSD{Q}{\alg}$ is a triangulated category.  If $Q$ is the path category of \eqref{Gamma} with the relations $\partial^2 = 0$, then $\QSD{Q}{\alg}$ can be identified with the classic derived category $\QSD{}{\alg}$.

  \item  $\lMod{Q,\,\alg}$ contains three classes of objects, $\lE{}$, ${}^\perp\lE{}$, and $\lE{}^\perp$, which generalize the exact, semi-projective, and semi-injective complexes (see \exaref{Ch-semi}).
  
  \item  These classes provide $\lMod{Q,\,\alg}$ with two model category structures, which generalize the standard projective and injective model category structures on $\operatorname{Ch}\mspace{1mu}\alg$ (see \ref{summary-of-HJ2021}).

  \item  Either model category structure on $\lMod{Q,\,\alg}$ has the associated homotopy category
$\operatorname{Ho}(\lMod{Q,\,\alg}) \,=\, \QSD{Q}{\alg}$.

\end{itemlist}

However, there are key properties of the (classic) derived category $\QSD{}{\alg}$ which do not generalize to $\QSD{Q}{\alg}$, and the purpose of this paper is to compare and contrast these categories by investigating several key classes of objects.

\begin{bfhpg}[Strictly perfect objects]
Recall that a {\em strictly perfect} complex $P$ is a bounded complex of finitely generated projective modules.  It is well-known that a strictly perfect complex is {\em compact} in $\QSD{}{\alg}$, that is, $\Hom{\QSD{}{\alg}}{P}{-}$ preserves set-indexed coproducts.  However, as shown in \thmref{counterexample}, the corresponding property does {\em not} always hold for $\QSD{Q}{\alg}$.
\end{bfhpg}

\begin{res*}[Theorem~A]
Let $Q$ be the path category of the Jordan quiver
\begin{equation*}
    \xymatrix{
      \vtx \ar^{\partial}@(ur,dr)[]
    }
\end{equation*}
with the relation $\partial^2 = 0$.  There exists a commutative noetherian ring $A$ and an object 
\begin{equation*}
    M \,= \;
    \xymatrix{
      A \ar^{x}@(ur,dr)[]
    }
\end{equation*}
in $\QSD{Q}{\alg}$ which is \emph{\emph{not}} compact in $\QSD{Q}{\alg}$, despite having the finitely generated projective module $A$ at the (unique) vertex.
\end{res*}

\noindent
Note that for this quiver, $\lMod{Q,\,\alg}$ is the category of {\em differential $A$-modules}, i.e.~$A$-modules equipped with an endomorphism squaring to zero.  Differential modules have been studied intensively in the literature, see for instance \cite{ABI-07}, but Theorem~A is new.

The object $M$ in Theorem~A is an obvious generalization of a strictly perfect complex.  Indeed, we will say that a functor $K \colon Q \to \lMod{\alg}$ 
in $\lMod{Q,\,\alg}$ is \emph{strictly perfect} if it has finite support, that is, if the set $\supp{K} = \{q \in Q \ | \ K(q) \neq 0 \}$ is finite, and the $\alg$-module $K(q)$ is finitely generated and projective for each $q \in Q$ (see \dfnref{perfect}).  As shown by Theorem~A, strictly perfect objects need not be compact in $\QSD{Q}{\alg}$, but we do have the following (positive) result, which is proved in \secref{Perfect}.  For the notion of \emph{cycles} in the category $Q$, see \dfnref{cycle} under the assumptions in \stpref{nilpotent}.  

\begin{res*}[Theorem~B]
  Assume \stpref{nilpotent} and one of the next conditions:
  \begin{rqm}
  \item The category $Q$ has no cycles, or
  \item The ring $\alg$ has finite left global dimension.
  \end{rqm}
  The following assertions hold:
  \begin{prt}
  \item Every strictly perfect object is compact in $\QSD{Q}{\alg}$.
  \item Every strictly perfect object is cofibrant in the projective model structure on $\lMod{Q,\,\alg}$; that is, every strictly perfect object is in \smash{${}^\perp\lE{}$} (see \dfnref{perfect}). 

  \item An object in $\QSD{Q}{\alg}$ is perfect if and only if it is isomorphic to a strictly perfect~\mbox{object.}
  \end{prt}
\end{res*}

\noindent
Notice that in the special case of chain complexes, the category $Q$ has no cycles and hence condition \prtlbl{1} in Theorem~B holds. Also note that by condition \rqmlbl{2} in Theorem~B, the ring in Theorem~A must necessarily have infinite global dimension (and, indeed, it has). We now turn to perfect objects in $\QSD{Q}{\alg}$, which are already mentioned in part \prtlbl{c} of Theorem~B.

\begin{bfhpg}[Perfect objects]
Since strictly perfect objects in the category $\lMod{Q,\,\alg}$ need not be compact in $\QSD{Q}{\alg}$, we propose in \dfnref[]{perfect} the following definition:
\begin{equation*}
  \qquad
  \text{$X \in \QSD{Q}{\alg}$ is \emph{perfect}} \ \ \ \iff \ \ \
  \left\{\!\!
  \begin{array}{l}
  \text{There is an isomorphism $X \cong K$ in $\QSD{Q}{\alg}$}
  \\
  \text{for some strictly perfect object $K \in {}^\perp\lE{}$\;.}
  \end{array}
  \right.
\end{equation*}
Observe that if condition (1) or (2) in Theorem~B holds, then by part \prtlbl{c} of that theorem, the perfect objects are just those which are isomorphic in $\QSD{Q}{\alg}$ to a strictly perfect object.

With this definition, we show in \secref{Perfect} the following result where \smash{$\QSDperf{Q}{\alg}$} is the class of perfect objects 
and $\QSD{Q}{\alg}^\mathrm{c}$ is the class of compact objects in $\QSD{Q}{\alg}$:
\end{bfhpg}

\enlargethispage{3ex}

\begin{res*}[Theorem~C]
Assume \stpref{nilpotent}. There is an inclusion of triangulated subcategories of the $Q$-shaped derived category of $\alg$,
\begin{equation*}
  \QSDperf{Q}{\alg} \,\subseteq\, \QSD{Q}{\alg}^\mathrm{c}\;.
\end{equation*}
Equality holds if and only if the subcategory $\QSDperf{Q}{\alg}$ is thick.
\end{res*}

\noindent
A triangulated subcategory of a triangulated category is called \emph{thick} if it is closed under direct summands (see also \dfnref{compactly-generated}). In the special case of chain complexes, it is known that $\QSDperf{}{\alg}$ is a thick subcategory of $\QSD{}{\alg}$; whence equality holds in Theorem~C.  We conjecture that the same is true in general:

\begin{bfhpg*}[Conjecture]
Assume \stpref{nilpotent}. There is always an equality of triangulated subcategories of the $Q$-shaped derived category of $\alg$,
\begin{equation*}
  \QSDperf{Q}{\alg} \,=\, \QSD{Q}{\alg}^\mathrm{c}\;.
\end{equation*}
\end{bfhpg*}

\vspace*{0.5ex}

\begin{bfhpg}[Compact objects]
The notion of (strictly) perfect objects aside, another main result is the following, generalizing that the (classic) derived category is compactly generated.  This will be proved in \secref{Compact} (the objects in the set $\mathbb{S}$ are defined in \ref{recall-C-K}):
\end{bfhpg}

\begin{res*}[Theorem~D]
  Assume \stpref{nilpotent}. Then $\QSD{Q}{\alg}$ is a compactly generated triangulated category; in fact, 
\begin{equation*}  
  \mathbb{S} \,=\, \big\{\Sq{q}(\alg) \,\big|\, q \in Q \big\}
\end{equation*}  
is a set of compact generators for $\QSD{Q}{\alg}$. In particular, one has
\begin{equation*}
  \QSD{Q}{\alg}
  \,=\, 
  \operatorname{Loc}(\mathbb{S})
  \qquad \text{and} \qquad
  \QSD{Q}{\alg}^\mathrm{c}
  \,=\, 
  \operatorname{Thick}(\mathbb{S})\;.
\end{equation*}
\end{res*}

\vspace*{0.5ex}

\begin{bfhpg}[Cofibrant and fibrant objects]
Our final main result concerns the two model category structures on the functor category $\lMod{Q,\,\alg}$.  In the projective model structure on $\lMod{Q,\,\alg}$, the class of cofibrant objects is ${}^\perp\lE{}$ and in the injective model structure the class of fibrant objects is $\lE{}^\perp$, see \cite[Thm.~6.1]{HJ2021}.  The following shows that over a ring with finite global dimension, it is possible to give hands-on descriptions of theses classes:
\end{bfhpg}

\begin{res*}[Theorem~E]
Assume \stpref{nilpotent}. There are inclusions:
  \begin{align*}
    {}^\perp\lE{} 
    &\,\subseteq\, 
    \{ X \in \lMod{Q,\,\alg} \; | \; X(q) \in \lPrj{\alg} \text{ for all } q \in Q\} \quad \text{and}
    \\
    \lE{}^\perp
    &\,\subseteq\, 
    \{ X \in \lMod{Q,\,\alg} \; | \; X(q) \in \lInj{\alg} \text{ for all } q \in Q\}\;,
  \end{align*}
  and equalities hold if $\alg$ has finite left global dimension.
\end{res*}

\noindent In this theorem, $\lPrj{\alg}$ and $\lInj{\alg}$ denote the classes of projective and injective left $\alg$-modules. Theorem~E, which is proved in \secref{Fibrant}, generalizes well-known descriptions of semi-projective and semi-injective complexes over a ring with finite left global dimension; see Avramov and Foxby \cite{LLAHBF91} and \exaref{Ch-semi} for further details.  By contrast, a complete understanding of ${}^\perp\lE{}$ and $\lE{}^\perp$ is generally not available, not even for chain complexes.  An illustration of how complicated the class ${}^\perp\lE{}$ can be is found in \appref{example}.

\vspace*{1ex}

This concludes the introduction. In the next section we recap some main definitions and results from \cite{HJ2021} which are key to understanding the present paper.

\section{Prerequisites}
\label{sec:Prerequisites}

We start by recalling some general facts about (abelian) model categories.

\begin{ipg}
  \label{Localization}
Let $W$ be a collection of morphisms in a category $\mathcal{C}$. By definition, the \emph{localization} of $\mathcal{C}$ with respect to $W$ is a category
$\mathcal{C}[W^{-1}]$ together with a functor $\mathcal{C} \to \mathcal{C}[W^{-1}]$ satisfying the expected universal property; see for example Krause \cite[\S1.1]{Krause-book} or Weibel \cite[\S10.3]{Wei}. There is a standard construction of the category $\mathcal{C}[W^{-1}]$ 
and the functor $\mathcal{C} \to \mathcal{C}[W^{-1}]$ that always works if one ignores set-theoretical difficulties. In this construction, the category $\mathcal{C}[W^{-1}]$ has the same objects as $\mathcal{C}$; however, in general, the hom ``sets'' in $\mathcal{C}[W^{-1}]$ need not be small, so the ``category'' $\mathcal{C}[W^{-1}]$ might not exist within the same set-theoretic universe as $\mathcal{C}$. In the situation where $\mathcal{C}$ is a \emph{model category} and $W$ is the collection of \emph{weak equivalences} in $\mathcal{C}$ (which is the situation of interest in this paper), the localization $\mathcal{C}[W^{-1}]$ does exist; it is called the \emph{homotopy category} of $\mathcal{C}$ and denoted by $\operatorname{Ho}(\mathcal{C})$; see e.g.~Hovey \cite[\S1.2]{modcat}.
\end{ipg}

We assume that the reader has basic knowledge of \emph{cotorsion pairs}; in particular, (s)he must know what a cotorsion pair is and what it means for such a pair to be \emph{complete} or \emph{hereditary}. The book \cite{GobelTrlifaj} (Chaps.~2 and 3) by G{\"o}bel and Trlifaj is an excellent account on cotorsion pairs in module categories. Cotorsion pairs in general abelian or exact categories are treated in various papers, see e.g.~Saor{\'{\i}}n and {\v{S}}{\v{t}}ov{\'{\i}}{\v{c}}ek \cite{SaorinStovicek} and Gillespie \cite{MR2811572,MR3608936}.

\begin{ipg}
  \label{amc}
Let $\mathcal{M}$ be an abelian category. An \emph{abelian model structure} on $\mathcal{M}$ is a model structure compatible with the abelian structure in the sense of \cite[Dfn.~2.1]{Hovey02}.
By Thm.~2.2~in~\emph{loc.~cit.} any abelian model structure on $\mathcal{M}$ corresponds to a triple $(\mathcal{Q},\mathcal{W},\mathcal{R})$ of classes of objects in $\mathcal{M}$ for which $\mathcal{W}$ is thick and $(\mathcal{Q} \cap \mathcal{W},\mathcal{R})$ and $(\mathcal{Q},\mathcal{W} \cap \mathcal{R})$ are complete cotorsion pairs. Such a triple is called a
\emph{Hovey triple} in $\mathcal{M}$; in fact, $\mathcal{Q}$ is the class of cofibrant objects, $\mathcal{R}$ is the class of fibrant objects, and $\mathcal{W}$ is the class of trivial objects, i.e.~objects that are weakly equivalent to $0$. If the complete cotorsion pairs $(\mathcal{Q} \cap \mathcal{W},\mathcal{R})$ and $(\mathcal{Q},\mathcal{W} \cap \mathcal{R})$ are hereditary, then $(\mathcal{Q},\mathcal{W},\mathcal{R})$ is called a \emph{hereditary} Hovey triple and the associated abelian model structure on $\mathcal{M}$ is called a \emph{hereditary} abelian model structure. In this case, the full subcategory $\mathcal{M}_{\mathrm{cf}} = \mathcal{Q} \cap \mathcal{R}$ of cofibrant and fibrant (also called bifibrant) objects in $\mathcal{M}$ is a Frobenius exact category whose class of pro-injective objects is $\mathcal{Q}\cap \mathcal{W} \cap \mathcal{R}$, see Gillespie \cite[Prop.~4.2]{MR3608936}. The stable category \mbox{$\operatorname{Stab}(\mathcal{M}_{\mathrm{cf}}) = (\mathcal{Q} \cap \mathcal{R})\big/(\mathcal{Q}\cap \mathcal{W} \cap \mathcal{R})$} is by \cite[Thm.~4.3]{MR3608936} triangulated equivalent to the homotopy category \smash{$\operatorname{Ho}(\mathcal{M}) = \mathcal{M}[\{\mathrm{weak \ equivalences}\}^{-1}]$}; in fact, the inclusion functor $\mathcal{M}_{\mathrm{cf}} \to \mathcal{M}$ induces a triangulated equivalence,
\begin{equation}
  \label{eq:equivalence}
  \operatorname{Stab}(\mathcal{M}_{\mathrm{cf}}) \stackrel{\simeq}{\longrightarrow} \operatorname{Ho}(\mathcal{M})
\end{equation}
whose quasi-inverse is the bifibrant replacement functor $\fibr \cofr$ described in \cite[Lem.~2.5]{MR3608936}: For $X \in \mathcal{M}$ write $\cofr X$ for a cofibrant replacement and $\fibr X$ for a fibrant replacement of $X$. A bifibrant replacement of $X$ is $\fibr \cofr X$. Note that $\cofr X \in \mathcal{Q}$, $\fibr X \in \mathcal{R}$, and $\fibr \cofr X \in \mathcal{Q} \cap \mathcal{R}$.

As explained in Happel \cite[Chap.~I\S2]{Happel}, the stable category of any Frobenius category is triangulated in a canonical way, so the triangulated equivalence \eqref{equivalence} provides a rather explicit description of the triangulated structure on $\operatorname{Ho}(\mathcal{M})$. This will be useful for us.
\end{ipg}

We now give a brief summary of some definitions and results from \cite{HJ2021}, which are key to this paper.

\begin{bfhpg}[Blanket assumption]
  \label{blanket-assumptions}
Unless otherwise stated, $\Bbbk$ denotes in this paper any commutative ring, $Q$ any small $\Bbbk$-preadditive category, and $\alg$ any $\Bbbk$-algebra. 
\end{bfhpg}

\begin{bfhpg}[Notation]
  \label{not}
   We will use the notation from \cite{HJ2021}. In particular, $\lMod{\alg}$ denotes the category of left $\alg$-modules and $\lMod{Q,\,\alg}$ is the category of $\Bbbk$-linear functors $Q \to \lMod{\alg}$, see \cite[Dfn.~3.1]{HJ2021}. Further symbols along with references to where they appear in \cite{HJ2021} will follow.
\end{bfhpg}

\begin{ipg}
\label{recall-F-G}
Recall from \cite[Cor.~3.9]{HJ2021} that for every $q \in Q$ there is an adjoint triple $(\Fq{q},\Eq{q},\Gq{q})$:
  \begin{equation*}
  \xymatrix@C=4pc{
    \lMod{Q,\,\alg}
    \ar[r]^-{\Eq{q}}
    &
    \lMod{\alg}
    \ar@/_1.8pc/[l]_-{\Fq{q}}
    \ar@/^1.8pc/[l]^-{\Gq{q}}     
  }
  \qquad \text{given by} \qquad
  {\setlength\arraycolsep{1.5pt}
   \renewcommand{\arraystretch}{1.2}
  \begin{array}{rcl}
  \Fq{q}(M) &=& Q(q,-) \otimes_\Bbbk M \\
  \Eq{q}(X) &=& X\mspace{1.5mu}(q) \\ 
  \Gq{q}(M) &=& \Hom{\Bbbk}{Q(-,q)}{M}
  \end{array}
  }
  \end{equation*}
  for $M \in \lMod{\alg}$ and $X \in \lMod{Q,\,\alg}$.  Note that $\Eq{q}$ is the evaluation functor at $q$.
\end{ipg}  

Some of the definitions below are borrowed from Dell'Ambrogio, Stevenson, and  \v{S}\v{t}ov\'{\i}\-\v{c}ek \cite[Dfns. 4.1, 4.5 and Rmk. 4.7]{MR3719530}. Note that Serre functors were originally introduced by Bondal and Kapranov in \cite{BondalKapranov} (in the case where $\Bbbk$ is a field). The definitions play a key role in \cite{HJ2021} and in the present work.

\begin{ipg}
\label{Bbbk}
  We consider the following conditions on $\Bbbk$ and $Q$ (which may or may not hold).
\begin{rqm}
\item \label{eq:Homfin} \emph{Hom-finiteness}: Each hom $\Bbbk$-module $Q(p,q)$ is finitely generated and projective. 

\item \label{eq:locbd} \emph{Local boundedness}: For each $q \in Q$ there are only finitely many objects in $Q$ map\-ping nontrivially into or out of $q$, that is, the following sets are finite:
\begin{equation*}
  \hspace*{4ex}
  \operatorname{N}_-(q)=\{p \in Q \;|\; Q(p,q) \neq 0\}
  \quad \ \text{ and } \ \quad
  \operatorname{N}_+(q)=\{r \in Q \;|\; Q(q,r) \neq 0\}\;.
\end{equation*}

\item \label{eq:Serre} \emph{Existence of a Serre functor}: There exists a $\Bbbk$-linear autoequivalence $\Serre \colon Q \to Q$ and a natural isomorphism $Q(p,q) \cong \Hom{\Bbbk}{Q(q,\Serre(p))}{\Bbbk}$.

\item \label{eq:strong-retraction} \emph{Strong Retraction Property}: For $q \in Q$ the unit map $\Bbbk \to Q(q,q)$ given by $x \mapsto x\cdot\mathrm{id}_q$ has a $\Bbbk$-module retraction; whence there is a $\Bbbk$-module decomposition:
\begin{equation*}
  Q(q,q) \,=\, (\Bbbk\cdot\mathrm{id}_q) \oplus \mathfrak{r}_q\;.
\end{equation*}  
Moreover, the $\Bbbk$-submodules $\mathfrak{r}_q$ are compatible with composition in $Q$ as follows:
\begin{itemlist}
\item[($\dagger$)] $\mathfrak{r}_q \circ \mathfrak{r}_q \subseteq \mathfrak{r}_q$ for all $q \in Q$.
\item[($\ddagger$)] $Q(q,p) \circ Q(p,q) \subseteq \mathfrak{r}_p$ for all $p \neq q$ in $Q$.
\end{itemlist}

\end{rqm}
\end{ipg}

\begin{ipg}
  \label{consequences-of-srp}
  The Strong Retraction Property, i.e.~condition \eqref{strong-retraction} in \ref{Bbbk}, makes it possible to define the \emph{pseudo-radical} $\mathfrak{r}$. This important ideal in $Q$ is explained in detail in \cite[Rmk.~7.4 and Lem.~7.7]{HJ2021}. Using the pseudo-radical one can for every $q \in Q$ define \emph{stalk functors}:
\begin{equation*}
  \stalkco{q}\,=\, Q(q,-)/\mathfrak{r}(q,-) \,\in\, \lMod{Q}
  \qquad \text{and} \qquad
  \stalkcn{q} \,=\, Q(-,q)/\mathfrak{r}(-,q) \,\in\, \rMod{Q}\,,
\end{equation*}  
see \cite[Dfn.~7.9 and Lem.~7.10]{HJ2021}. Using the stalk functors one can for each $q \in Q$ and $i \geqslant 0$ define \emph{(co)homology} functors:
  \begin{equation*}
    \cH[i]{q} \,=\, \Ext{Q}{i}{\stalkco{q}}{?}
    \qquad \text{and} \qquad
    \hH[i]{q} \,=\, \Tor{Q}{i}{\stalkcn{q}}{?}\;,
  \end{equation*}
  which are functors from $\lMod{Q,\,\alg}$ to $\lMod{\alg}$; see \cite[Dfn.~7.11]{HJ2021}.  
\end{ipg}

\begin{ipg}
  \label{recall-C-K}
  Assume that $Q$ satisfies the Strong Retraction Property, i.e.~condition \eqref{strong-retraction} in \ref{Bbbk}. Recall from \cite[Prop.~7.15]{HJ2021} (see also Setup~7.13 in \emph{loc.~cit.}) that for every $q \in Q$ there is an adjoint triple $(\Cq{q},\Sq{q},\Kq{q})$:
  \begin{equation*}
  \begin{gathered}
  \xymatrix@C=4pc{
    \lMod{\alg}
    \ar[r]^-{\Sq{q}} &
    \lMod{Q,\,\alg}
    \ar@/_1.8pc/[l]_-{\Cq{q}}
    \ar@/^1.8pc/[l]^-{\Kq{q}}    
  }  \qquad \textnormal{given by} \qquad
  {\setlength\arraycolsep{1.5pt}
   \renewcommand{\arraystretch}{1.2}
  \begin{array}{rcl}
  \Cq{q}(X) &=& \ORt{\stalkcn{q}}{X} \\
  \Sq{q}(M) &=& \stalkco{q} \otimes_\Bbbk M
  \\ 
  \Kq{q}(X) &=& \Hom{Q}{\stalkco{q}}{X}
  \end{array}
  }
  \end{gathered}
  \end{equation*}
  for $M \in \lMod{\alg}$ and $X \in \lMod{Q,\,\alg}$. 
\end{ipg}

Many of the results in \cite{HJ2021} and in this paper require the following setup.

\begin{stp}
  \label{stp:nilpotent}
  The commutative ring $\Bbbk$ and the $\Bbbk$-preadditive category $Q$ from \ref{blanket-assumptions} satisfy:
 \begin{itemlist}
 \item $Q$ is Hom-finite, locally bounded, has a Serre functor, and has the Strong Retraction Property, that is, all all four conditions in \ref{Bbbk} hold.
  \item The pseudo-radical $\mathfrak{r}$ from \ref{consequences-of-srp} is nilpotent, that is, $\mathfrak{r}^N=0$ for some $N \in \mathbb{N}$.
 \item The ring $\Bbbk$ is noetherian and hereditary (e.g.~$\Bbbk$ is a field or $\Bbbk=\mathbb{Z}$).
  \item $\alg$ can be any $\Bbbk$-algebra. 
 \end{itemlist}
\end{stp} 
 
\noindent 
Let us explain the importance and the power of this setup: 
 
\begin{ipg}
  \label{summary-of-HJ2021}
  Assume \stpref{nilpotent}. In this case, there is by \cite[Dfn.~4.1]{HJ2021} an equality:
   \begin{align*}
    \lE{} &\,=\, \{ X \in \lMod{Q,\,\alg} \,|\, X^\natural \text{ has finite projective dimension in } \lMod{Q} \}
    \\
    &\,=\, \{ X \in \lMod{Q,\,\alg} \,|\, X^\natural \text{ has finite injective dimension in } \lMod{Q} \}\;,
  \end{align*}  
   where $(-)^\natural$ is the forgetful functor, see \cite[Dfn.~3.2]{HJ2021}. Objects in $\lE{}$ are called \emph{exact} and the class $\lE{}$ is by \cite[Thm.~6.1]{HJ2021} part of two different hereditary Hovey triples, 
\begin{equation*}
  (\mathcal{Q},\mathcal{W},\mathcal{R}) = ({}^\perp\lE{}, \lE{}, \lMod{Q,\,\alg})
  \qquad \text{and} \qquad
  (\mathcal{Q},\mathcal{W},\mathcal{R}) = (\lMod{Q,\,\alg}, \lE{}, \lE{}^\perp)\;,
\end{equation*}   
which, as explained in \ref{amc}, yield two different model structures on $\lMod{Q,\,\alg}$:
\begin{itemlist}   
\item The \emph{projective} model structure on $\lMod{Q,\,\alg}$ where ${}^\perp\lE{}$ is the class of cofibrant objects, $\lE{}$ is the class of trivial objects, and every object is fibrant.

\item The \emph{injective} model structure on $\lMod{Q,\,\alg}$ where $\lE{}^\perp$ is the class of fibrant objects, $\lE{}$ is the class of trivial objects, and every object is cofibrant.
\end{itemlist}
These two model structures have  by \cite[Prop.~6.3]{HJ2021} the same weak equivalences and the associated homotopy category is, as in \cite[Dfn.~6.4]{HJ2021}, denoted by
\begin{equation*}
  \QSD{Q}{\alg} \,=\, \operatorname{Ho}(\lMod{Q,\,\alg})\;.
\end{equation*}
It is called the \emph{$Q$-shaped derived category of $\alg$}. By the general theory from \ref{amc}, the category ${}^\perp\lE{}$, respectively, $\lE{}^\perp$, is a Frobenius exact category whose class of pro-injective objects is \smash{${}^\perp\lE{} \cap \lE{} = \lPrj{Q,\,\alg}$} (the projective objects in $\lMod{Q,\,\alg}$), respectively, \smash{$\lE{} \cap \lE{}^\perp = \lInj{Q,\,\alg}$} (the  injective objects in $\lMod{Q,\,\alg}$). As mentioned in \ref{amc} and recorded in \cite[Thm.~6.5]{HJ2021} there are equivalences of triangulated categories,
\begin{equation*}
  \frac{{}^\perp\mathscr{E}}{\lPrj{Q,\alg}} \ \simeq \ 
  \QSD{Q}{\alg} \ \simeq \
  \frac{\mathscr{E}^\perp}{\lInj{Q,\alg}}\;.
\end{equation*} 

Finally, the (co)homology functors \smash{$\cH[i]{q}, \hH[i]{q} \colon \lMod{Q,\,\alg} \to \lMod{\alg}$} from \ref{consequences-of-srp} detect exact objects and weak equivalences in \smash{$\lMod{Q,\,\alg}$} as described in \cite[Thms.~7.1 and 7.2]{HJ2021}. In particular, the functors $\cH[i]{q}$ and $\hH[i]{q}$ map weak equivalences in to isomorphisms, and hence they induce well-defined functors on the $Q$-shaped derived category \smash{$\QSD{Q}{\alg}$}.
\end{ipg}

Not all results in this paper require the full strength of \stpref{nilpotent}; as the reader will notice we sometimes only assume some of the conditions in \ref{Bbbk}. This is the case for e.g.~\lemref{F-G-iso} and \prpref{F-G-locally-finite}, and it is also the case for a number of results in 
\secref[Sections~]{Compact} and \secref[]{Perfect}.

\section{(Co)fibrant objects in $\lMod{Q,\,\alg}$}

\label{sec:Fibrant}

In general, the class ${}^\perp\lE{}$ of cofibrant objects in the projective model structure and the class $\lE{}^\perp$ of fibrant objects in the injective model structure on $\lMod{Q,\,\alg}$ (see \ref{summary-of-HJ2021}) are hard to perceive. The main purpose of this section is to prove Theorem~E from the Introduction, which gives hands-on descriptions of the classes ${}^\perp\lE{}$ and $\lE{}^\perp$ in the case where $\alg$ has finite global dimension. Theorem~E is also an important ingredient in the proof of Theorem~B (from the Introduction), which is established in \secref{Perfect}. Another goal in this section is \thmref{closure-properties}, which 
shows how that classes ${}^\perp\lE{}$ and $\lE{}^\perp$ behave in short exact sequences.

To parse the next definition, recall from \cite[6.4 and 6.7]{MR3990178} the definitions of \emph{filtrations} and \emph{cofiltrations} with respect to some class of objects in an abelian category.

\begin{dfn}
  For a class $\mathcal{C}$ of objects in a bicomplete abelian category $\mathcal{M}$ we set
\begin{align*}
  \Filt{\mathcal{C}} 
  &\,=\,
  \{ M \in \mathcal{M} \ | \ \textnormal{$M$ has a $\mathcal{C}$-filtration} \}\;,
  \\
  \coFilt{\mathcal{C}} 
  &\,=\,
  \{ M \in \mathcal{M} \ | \ \textnormal{$M$ has a $\mathcal{C}$-cofiltration} \}\;.
\end{align*}  
\end{dfn}

This definition is important in the next result, and so are the (stalk) functors $\Sq{q}$ from~\ref{recall-C-K}. Furthermore, 
$\lPrj{\alg}$ and $\lInj{\alg}$ denote the classes of projective and injective left $\alg$-modules.

\begin{prp}
  \label{prp:E-perp-inclusions}
Assume \stpref{nilpotent}. The following assertions hold.
 \begin{prt}
 \item
 With $\mathcal{P} = \{ \Sq{q}(P) \ | \ q \in Q \textnormal{ and } P \in \lPrj{\alg} \}$ there are inclusions,
  \begin{equation*}
    \Filt{\mathcal{P}} \ \subseteq \
    {}^\perp\lE{} 
    \ \subseteq \
    \{ X \in \lMod{Q,\,\alg} \; | \; X(q) \in \lPrj{\alg} \text{ for all } q \in Q \}\;.
  \end{equation*}
  
  \item With $\mathcal{I} = \{ \Sq{q}(I) \ | \ q \in Q \textnormal{ and } I \in \lInj{\alg} \}$ there are inclusions,
  \begin{equation*}
    \hspace*{-2ex}
    \coFilt{\mathcal{I}} \ \subseteq \   
    \lE{}^\perp
    \ \subseteq \
    \{ X \in \lMod{Q,\,\alg} \; | \; X(q) \in \lInj{\alg} \text{ for all } q \in Q \}\;.
  \end{equation*}
  \end{prt}
\end{prp}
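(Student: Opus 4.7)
The two parts are mutually dual, so I will outline (a) in detail; part (b) follows by interchanging the roles of projective and injective, the adjoint pair $\Sq{q} \dashv \Kq{q}$ with $\Cq{q} \dashv \Sq{q}$, and cohomology $\cH[i]{q}$ with homology $\hH[i]{q}$. Each part contains two inclusions, which I handle separately.

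For the right inclusion of (a), namely ${}^\perp\lE{} \subseteq \{X \,|\, X(q) \in \lPrj{\alg}\ \text{for all}\ q\}$: fix $X \in {}^\perp\lE{}$ and aim to show $\Ext{\alg}{1}{\Eq{q}(X)}{N} = 0$ for every $N \in \lMod{\alg}$. The functor $\Eq{q}$ is always exact, and Hom-finiteness \eqref{Homfin} makes its right adjoint $\Gq{q}$ exact too, so $\Eq{q}$ preserves projectives. A standard balanced-Ext argument via the adjunction $\Eq{q} \dashv \Gq{q}$ from \ref{recall-F-G} then yields the natural isomorphism $\Ext{\alg}{i}{\Eq{q}(X)}{N} \cong \Ext{Q,\alg}{i}{X}{\Gq{q}(N)}$, whose right side vanishes as soon as $\Gq{q}(N) \in \lE{}$. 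To obtain the latter, take a two-term $\Bbbk$-projective resolution of $N$ (hereditary $\Bbbk$) and apply the exact functor $\Gq{q}$; the Serre isomorphism \eqref{Serre} identifies $\Gq{q}(\Bbbk)$ with the representable $Q(\Serre^{-1}(q),-)$ in $\lMod{Q}$, so $\Gq{q}(N)^\natural$ has projective dimension at most one in $\lMod{Q}$ and therefore lies in $\lE{}$.

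For the left inclusion $\Filt{\mathcal{P}} \subseteq {}^\perp\lE{}$: the cotorsion pair $({}^\perp\lE{}, \lE{})$ from \ref{summary-of-HJ2021} is complete and hereditary, so Eklof's Lemma shows that ${}^\perp\lE{}$ is closed under transfinite extensions and reduces the task to $\mathcal{P} \subseteq {}^\perp\lE{}$. Fix $q \in Q$ and $P \in \lPrj{\alg}$; the aim is $\Ext{Q,\alg}{i}{\Sq{q}(P)}{E} = 0$ for $i \geq 1$ and $E \in \lE{}$. The stalk $\stalkco{q}$ is supported at $q$ with value $\Bbbk$, so $\Sq{q} = \stalkco{q} \otimes_\Bbbk -$ is exact and its right adjoint $\Kq{q}$ preserves injectives. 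Picking an injective resolution $E \to I^\bullet$ in $\lMod{Q,\alg}$, the adjunction $\Sq{q} \dashv \Kq{q}$ from \ref{recall-C-K} combined with projectivity of $P$ gives $\Ext{Q,\alg}{i}{\Sq{q}(P)}{E} \cong \Hom{\alg}{P}{R^i\Kq{q}(E)}$, reducing the problem to showing that $R^i\Kq{q}(E) = 0$ for all $i \geq 0$ whenever $E \in \lE{}$.

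I expect this last vanishing to be the main obstacle. The plan is to identify $R^i\Kq{q}(E)$ with the cohomology $\cH[i]{q}(E)$ from \ref{consequences-of-srp}. This is not entirely formal because the two derived functors are computed in different ambient categories, $\lMod{Q,\alg}$ versus $\lMod{Q}$; my route is to compute both via a single projective resolution $\Pi_\bullet \to \stalkco{q}$ in $\lMod{Q}$, noting that $\Hom{Q}{\Pi_\bullet}{E}$ carries an $\alg$-structure inherited from $E$ and that its cohomology computes $R^i\Kq{q}(E)$, while its underlying abelian-group cohomology is precisely $\cH[i]{q}(E)$. Once this identification is in place, the cohomological characterisation of $\lE{}$ from \cite[Thms.~7.1 and 7.2]{HJ2021}, available under \stpref{nilpotent}, forces $\cH[i]{q}(E) = 0$, completing part (a); the fully dual argument---using $\Fq{q}$ in place of $\Gq{q}$, homology in place of cohomology, and cofiltrations in place of filtrations---establishes part (b).
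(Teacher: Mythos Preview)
Your approach is essentially the same as the paper's: Eklof's lemma reduces the left inclusion to $\mathcal{P}\subseteq {}^\perp\lE{}$, and both inclusions are then handled via the relevant adjunctions $(\Sq{q},\Kq{q})$ and $(\Eq{q},\Gq{q})$ together with the fact that $\Gq{q}(N)\in\lE{}$ and the vanishing $\cH[1]{q}(E)=0$ for $E\in\lE{}$. Where the paper simply cites \cite[Lem.~7.14(b)]{HJ2021} and \cite[Lem.~1.3]{MR4013804}, you spell out the arguments directly; that is fine.

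Two small points. First, a slip: you write ``$R^i\Kq{q}(E)=0$ for all $i\geqslant 0$''; you mean $i\geqslant 1$ (in fact $i=1$ suffices, since you only need $\Ext^1$-vanishing for membership in ${}^\perp\lE{}$).

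Second, the identification $R^i\Kq{q}\cong\cH[i]{q}$ that you flag as the main obstacle is correct, but your justification is not quite complete: merely noting that $\Hom{Q}{\Pi_\bullet}{E}$ carries an $\alg$-structure does not yet show its cohomology computes the derived functor taken in $\lMod{Q,\,\alg}$. One clean way to close the gap is this. Choose $\Pi_\bullet\to\stalkco{q}$ with each $\Pi_j$ a finite sum of representables. By Hom-finiteness every term of the augmented complex $\cdots\to\Pi_1\to\Pi_0\to\stalkco{q}\to 0$ has finitely generated projective value at each $p\in Q$, so the complex is pointwise $\Bbbk$-split; hence applying $-\otimes_\Bbbk\alg$ keeps it exact and yields a projective resolution $\Pi_\bullet\otimes_\Bbbk\alg\to\Sq{q}(\alg)$ in $\lMod{Q,\,\alg}$. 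Now the standard balancing of $\Ext$ gives
\[
R^i\Kq{q}(E)\;\cong\;\Ext{Q,\,\alg}{i}{\Sq{q}(\alg)}{E}\;\cong\;H^i\Hom{Q}{\Pi_\bullet}{E}\;=\;\cH[i]{q}(E),
\]
as desired. (The same pointwise-splitness idea also tidies up your argument for $\Gq{q}(N)\in\lE{}$, where strictly speaking one applies the $\lMod{\Bbbk}\!\to\!\lMod{Q}$ avatar of $\Gq{q}$, namely $\Hom{\Bbbk}{Q(-,q)}{-}$, to a $\Bbbk$-projective resolution of $N^\natural$.)
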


\begin{proof}
  \proofoftag{a} To prove the first inclusion, it suffices by Eklof's lemma \cite[Lem.~6.6]{MR3990178} to show that $\mathcal{P}$ is a subset of ${}^\perp\lE{}$. Let $q \in Q$ and $P \in \lPrj{\alg}$ be given. Recall from \ref{recall-C-K} that the functor $\Sq{q}$ is exact with right adjoint $\Kq{q}$. The first right derived functor $\mathrm{R}^1\Kq{q}$ is the cohomology functor functor $\cH[1]{q}$ from \ref{consequences-of-srp}. Thus, for every $E \in \lE{}$ one has \smash{$\mathrm{R}^1\Kq{q}(E) = \cH[1]{q}(E)=0$} by \cite[Thm.~7.1]{HJ2021}, and the first isomorphism below now follows from \cite[Lem.~1.3]{MR4013804}. The second isomorphism holds as $P \in \lPrj{\alg}$.
  \begin{equation*}
    \Ext{Q,\,\alg}{1}{\Sq{q}(P)}{E} \,\cong\,
    \Ext{\alg}{1}{P}{\Kq{q}(E)} \,\cong\, 0\;.
  \end{equation*}
This proves that $\Sq{q}(P)$ belongs to ${}^\perp\lE{}$, and hence $\mathcal{P} \subseteq {}^\perp\lE{}$, as desired.

To prove the second inclusion, let $X \in {}^\perp\lE{}$ and $q \in Q$ be given. The evaluation functor $\Eq{q}$ at $q$ has by \ref{recall-F-G} a right adjoint $\Gq{q}$, so there is a natural isomorphism,
\begin{equation*}
  \Hom{\alg}{X(q)}{-} \,=\, \Hom{\alg}{\Eq{q}(X)}{-}
  \,\cong\, \Hom{Q,\,\alg}{X}{\Gq{q}(-)}\;.
\end{equation*}
Thus, we must prove that the last functor above is exact. Let $\xi = 0 \to M' \to M \to M'' \to 0$ be an exact sequence in $\lMod{\alg}$. As the category $Q$ is assumed to be Hom-finite, the functor $\Gq{q}$ is exact by \cite[Cor.~3.9(b)]{HJ2021} and hence $\Gq{q}(\xi)$ is an exact sequence in $\lMod{Q,\,\alg}$. To prove exactness of the sequence $\Hom{Q,\,\alg}{X}{\Gq{q}(\xi)}$, it suffices to argue that one has
\begin{equation}
  \label{eq:Ext-is-0} 
  \Ext{Q,\,\alg}{1}{X}{\Gq{q}(M')}\,=\,0\;. 
\end{equation}  
By the the assumptions on $Q$, we can apply \cite[Thm.~7.1 and Lem.~7.14(b)]{HJ2021} to conclude that $\Gq{q}(M') \in\lE{}$; and since one has $X \in {}^\perp\lE{}$, it follows that \eqref{Ext-is-0} holds.

\proofoftag{b} Dual to the proof of part \prtlbl{a}. Note that instead of Eklof's lemma \cite[Lem.~6.6]{MR3990178} and \cite[Lem.~1.3]{MR4013804} one uses Trlifaj's lemma \cite[Lem.~6.8]{MR3990178} and \cite[Lem.~1.2]{MR4013804}.
\end{proof}

We are now in a position to prove Theorem~E from the Introduction.

\begin{proof}[Proof of Theorem~E]
  The inclusions were established in \prpref{E-perp-inclusions}. Now assume that $\alg$ has finite left global dimension. We only prove that the first inclusion is an equality; a similar argument shows that the second inclusion is an equality. Let  $X \in \lMod{Q,\,\alg}$ be an object that satisfies $X(q) \in \lPrj{\alg}$ for all $q \in Q$. We must argue that $X$ belongs to ${}^\perp\lE{}$.
Since $({}^\perp\lE{},\lE{})$ is a complete cotorsion pair in $\lMod{Q,\,\alg}$, see \ref{summary-of-HJ2021} (or \cite[Thm.~5.5]{HJ2021}), there exists an exact sequence,
\begin{equation}
  \label{eq:EPX}
  0 \longrightarrow E \longrightarrow P \longrightarrow X \longrightarrow 0
\end{equation}  
with $P \in {}^\perp\lE{}$ and $E \in \lE{}$. The goal is to show that this sequence splits; this will imply that $X \in {}^\perp\lE{}$ since $P \in {}^\perp\lE{}$. We have $P(q),X(q) \in \lPrj{\alg}$ for all $q \in Q$ by \prpref{E-perp-inclusions}\prtlbl{a} and by assumption. As the sequence $0 \to E(q) \to P(q) \to X(q) \to 0$ is exact, it follows that also $E(q) \in \lPrj{\alg}$ for all $q \in Q$. 

At this point, we pause to make a general observation: Let $n \in \mathbb{N}_0$ be the left global dimension of $\alg$. Evidently, the category  $\lMod{\alg}$ is locally $n$-Gorenstein in the sense of \cite[Dfns.~2.1 and 2.5]{MR3719530}, so by definition (see the remarks above \cite[Thm.~4.6]{MR3719530}), this means that $\alg$ is an $n$-Gorenstein algebra. Hence \cite[Cor.~4.8]{MR3719530}\footnote{Note that \cite[Cor.~4.8]{MR3719530} deals with $\alg \otimes Q$-modules $M$, i.e.~$\Bbbk$-linear functors $M \colon \alg \otimes Q \to \lMod{\Bbbk}$, where $\alg \otimes Q$ is the \emph{extension} of $Q$ by $\alg$ defined above \cite[Thm.~4.6]{MR3719530}. However, it is easy to see that the category $\lMod{\alg \otimes Q}$ of $\alg \otimes Q$-modules is equivalent to the category $\lMod{Q,\,\alg}$ of $\Bbbk$-linear functors $Q \to \lMod{\alg}$.}
 implies that an object $M \in \lMod{Q,\,\alg}$~is Gorenstein projective if and only if the $\alg$-module $M(q)$ is Gorenstein projective for every $q \in Q$. Since $\alg$ has finite left global dimension, the latter condition just means that $M(q) \in \lPrj{\alg}$ for every $q \in Q$, see \cite[Prop.~10.2.3]{rha}.
 
By the observation above, it follows that $X$, $P$, and $E$ are Gorenstein projective objects in $\lMod{Q,\,\alg}$. We will now show that $E$ is even a projective object in $\lMod{Q,\,\alg}$. Once this has been shown, it follows immediately from the definition of Gorenstein projective objects, see \cite[Dfn.~2.20]{MR2404296}, that $\Ext{Q,\,\alg}{1}{X}{E}=0$, and thus \eqref{EPX} will split. We already know that $E \in \lE{}$. Thus, to show that $E$ is projective, we need by \cite[Thms.~7.1 and 7.29]{HJ2021} to argue that the $\alg$-module $\Cq{q}(E)$ is projective for every $q \in Q$, where $\Cq{q}$ is the functor from \ref{recall-C-K}. As $E$ is a Gorenstein projective object in $\lMod{Q,\,\alg}$ there is, by definition, an exact sequence,
\begin{equation}
  \label{eq:Pi}
  0 \longrightarrow E \longrightarrow P^0 \longrightarrow \cdots \longrightarrow P^{n-1} \longrightarrow E' \longrightarrow 0
\end{equation}  
in $\lMod{Q,\,\alg}$ where each $P^i$ is projective (and $E'$ is Gorenstein projective, but that is not important here). By another application of \cite[Thms.~7.1 and 7.29]{HJ2021}, each $P^i$ belongs to $\lE{}$, and hence $E'$ belongs to $\lE{}$ by the last assertion in \cite[Thm.~4.4]{HJ2021}. Now fix $q \in Q$. The right exact functor $\Cq{q}$ is, in general, not exact; indeed, its $i^\mathrm{th}$ left derived functor $\mathrm{L}_i\Cq{q}$ is the homology functor $\hH[i]{q}$ from \ref{consequences-of-srp}. Since $E'$ is in $\lE{}$ we have $\mathrm{L}_i\Cq{q}(E')=\hH[i]{q}(E')=0$ for all $i>0$ by \cite[Thm.~7.1]{HJ2021}, and thus \eqref{Pi} does, in fact, induce an exact sequence,
\begin{equation*}
 0 \longrightarrow \Cq{q}(E) \longrightarrow \Cq{q}(P^0) \longrightarrow \cdots \longrightarrow \Cq{q}(P^{n-1}) \longrightarrow \Cq{q}(E') \longrightarrow 0\;,
\end{equation*}
of left $\alg$-modules. Each module $\Cq{q}(P^i)$ is projective by \cite[Thm.~7.29]{HJ2021}, and the module $\Cq{q}(E')$ has projective dimension at most $n$ (= the left global dimension of $\alg$). The exact sequence therefore shows that $\Cq{q}(E)$ is projective, as desired.
\end{proof}

\begin{exa}
  \label{exa:Ch-semi}
  Consider the situation where $Q$ is the mesh category of the repetitive quiver of \smash{$\vec{\mathbb{A}}_2 \,=\, \vtx \to \vtx$}\,; in other words, $Q$ is the path category of the quiver \eqref{Gamma} with the relations $\partial^2 = 0$. In this case,
  $\lMod{Q,\,\alg}$ is equivalent to the category $\operatorname{Ch}\mspace{1mu}\alg$ of (chain) complexes of left $\alg$-modules, see \cite[Rmk.~8.5 and Exa.~8.13]{HJ2021}. For the class
$\lE{}$ one has:
\begin{align*}
  \lE{}
  &\,=\,
  \{ \textnormal{exact (or acyclic) complexes} \}\;,
  \\
  {}^\perp\lE{}
  &\,=\,
  \{ \textnormal{semi-projective (or DG-projective) complexes} \}\;,  \quad \textnormal{and}
  \\
  \lE{}^\perp
  &\,=\,
  \{ \textnormal{semi-injective (or DG-injective) complexes} \}\;.
\end{align*}  
Indeed, the first equality follows easily from \cite[Thm.~7.1]{HJ2021} and the other two follow from Garc{\'{\i}}a Rozas \cite[Props.~2.3.4 and 2.3.5]{GR99}. 

In this case, the class $\mathcal{P}$ from \prpref{E-perp-inclusions} is just $\{ \upSigma^qP \ | \ q \in \mathbb{Z} \textnormal{ and } P \in \lPrj{\alg} \}$, where $\upSigma$ is the shift functor on $\operatorname{Ch}\mspace{1mu}\alg$. It is easily seen that every bounded below complex of projective $\alg$-modules, $\cpx{P} \,=\ \cdots \to P_{u+2} \to P_{u+1} \to P_{u} \to 0 \to 0 \to 0 \to \cdots$, is in  $\Filt{\mathcal{P}}$. Hence every such complex is semi-projective by the first inclusion in part \prtlbl{a} of \prpref{E-perp-inclusions}. Similarly, the first inclusion in part \prtlbl{b} of the same result shows that every bounded above complex of injective $\alg$-modules, $\cpx{I} \,=\ \cdots \to 0 \to 0 \to 0 \to I_{v} \to I_{v-1} \to I_{v-2} \to \cdots$, is semi-injective. This is of couse well-known. The second inclusions in parts \prtlbl{a} and \prtlbl{b} of \prpref{E-perp-inclusions} assert that every semi-projective/injective complex must consist of projective/injective modules, which is also well-known. Theorem~E in the Introduction shows that over a ring with finite global dimension, a semi-projective/injective complex is \textsl{nothing but} a complex consisting of projective/injective modules. This special case of Theorem~E can be found in \cite[Prop.~3.4]{LLAHBF91} by Avramov and Foxby. Applications of their result (and related results) can be found in \cite{Iacob06,MR2568347} by Iacob and Iyengar.  
\end{exa}

An important fact, which we shall now prove, is that when $Q$ is Hom-finite and has a Serre functor, the family $\{\Fq{q}\}_{q \in Q}$ is just a permutation of the family $\{\Gq{q}\}_{q \in Q}$, see \ref{recall-F-G}.

\begin{lem}
  \label{lem:F-G-iso}
  Assume that $Q$ is Hom-finite and has a Serre functor, i.e.~conditions \eqref{Homfin} and \eqref{Serre} in \ref{Bbbk} hold. For every $q \in Q$ there is a natural isomorphism of functors $\Gq{\,\Serre(q)} \cong \Fq{q}$.
\end{lem}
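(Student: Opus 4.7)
The plan is to construct the natural isomorphism pointwise and then check naturality. Fix $p\in Q$. Evaluating at $p$ one has
\begin{equation*}
\Fq{q}(M)(p) \,=\, Q(q,p)\otimes_{\Bbbk} M
\qquad\text{and}\qquad
\Gq{\Serre(q)}(M)(p) \,=\, \Hom{\Bbbk}{Q(p,\Serre(q))}{M}\,,
\end{equation*}
so the task is to produce a natural isomorphism between these two expressions, natural in $p\in Q$, in $M\in\lMod{\alg}$, and compatible enough to upgrade to a natural isomorphism of functors $\lMod{\alg}\to\lMod{Q,\alg}$.

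First I would invoke the defining property of the Serre functor, condition \eqref{Serre} in \ref{Bbbk}, to rewrite $Q(q,p)\cong\Hom{\Bbbk}{Q(p,\Serre(q))}{\Bbbk}$ naturally in $p$ (and in $q$). Tensoring with $M$ over $\Bbbk$ gives a natural isomorphism
\begin{equation*}
Q(q,p)\otimes_{\Bbbk} M \,\cong\, \Hom{\Bbbk}{Q(p,\Serre(q))}{\Bbbk}\otimes_{\Bbbk} M\,.
\end{equation*}
Next I would exploit the Hom-finiteness assumption \eqref{Homfin}: the $\Bbbk$-module $Q(p,\Serre(q))$ is finitely generated and projective, and for every finitely generated projective $\Bbbk$-module $N$ there is the standard natural evaluation isomorphism
\begin{equation*}
\Hom{\Bbbk}{N}{\Bbbk}\otimes_{\Bbbk} M \,\stackrel{\cong}{\longrightarrow}\, \Hom{\Bbbk}{N}{M}\,,\qquad (\varphi\otimes m)\longmapsto (n\mapsto \varphi(n)\cdot m)\,,
\end{equation*}
which is natural in $N$ and $M$. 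Applied with $N=Q(p,\Serre(q))$ this delivers the remaining piece, and composing the two steps yields a natural isomorphism $\Fq{q}(M)(p)\cong\Gq{\Serre(q)}(M)(p)$.

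Finally I would check that the composite isomorphism is natural in $p$ (so that it assembles into a morphism in $\lMod{Q,\alg}$) and in $M$ (so that it yields an isomorphism of functors $\lMod{\alg}\to\lMod{Q,\alg}$). Both steps above are natural in $p$: the Serre isomorphism is natural in both arguments by definition, and the evaluation isomorphism is natural in $N$. Naturality in $M$ is built into the evaluation map. I do not anticipate a serious obstacle here; the only subtle point is bookkeeping the direction of functoriality in $p$ (the Serre isomorphism turns covariance in $p$ on the tensor side into contravariance on the Hom side, which is exactly what is needed to match $Q(p,\Serre(q))$ appearing inside a Hom), but this is automatic once the Serre isomorphism is written as a natural transformation of the appropriate variance. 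Hom-finiteness is used in exactly one place, namely to invoke the evaluation isomorphism, so the hypothesis is optimal for this argument.
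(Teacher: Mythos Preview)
Your proposal is correct and is essentially the same argument as the paper's: both compose the Serre functor isomorphism $Q(q,p)\cong\Hom{\Bbbk}{Q(p,\Serre(q))}{\Bbbk}$ with the tensor evaluation isomorphism $\Hom{\Bbbk}{N}{\Bbbk}\otimes_\Bbbk M\cong\Hom{\Bbbk}{N}{M}$ for $N=Q(p,\Serre(q))$ finitely generated projective over $\Bbbk$, and then observe naturality in $p$ and $M$. The only cosmetic difference is the direction in which the chain of isomorphisms is traversed.
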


\begin{proof}
  Let $q \in Q$ be given. For every $M \in \lMod{\alg}$ and $p \in Q$ one has the following isomorphisms of left $\alg$-modules:
  \begin{align*}
    \Gq{\,\Serre(q)}(M)(p) 
    &\,\cong\,
    \Hom{\Bbbk}{Q(p,\Serre(q))}{M}
    \\
    &\,\cong\,
    \Hom{\Bbbk}{Q(p,\Serre(q))}{\Bbbk} \otimes_\Bbbk M
    \\
    &\,\cong\,
    Q(q,p) \otimes_\Bbbk M
    \\
    &\,\cong\,
    \Fq{q}(M)(p)\;.
  \end{align*}
  In this computation, the first and last isomorphisms hold by the definitions of the functors $\Gq{\,\Serre(q)}$ and $\Fq{q}$, see \ref{recall-F-G}. The second isomorphism is induced by the so-called tensor evaluation map, which in this case is an isomorphism as the $\Bbbk$-module $Q(p,\Serre(q))$ is finitely generated and projective by Hom-finiteness of $Q$. This isomorphism can easily be proved but it can also be found in e.g.~\cite[Lem.~1.1]{TIs65}. The third isomorphism holds by the definition of the Serre functor. All the isomorphisms above are natural in $p$ and $M$.
\end{proof}

\begin{dfn}
  A family $\{X_i\}_{i \in I}$ of objects in $\lMod{Q,\,\alg}$ is called \emph{locally finite} if for each $q \in Q$ the set $\{i \in I \,|\, X_i(q) \neq 0\}$ is finite. 
\end{dfn}

\begin{lem}
  \label{lem:locally-finite}
  If $\{X_i\}_{i \in I}$ is a locally finite family of objects in $\lMod{Q,\,\alg}$, then there is an isomorphism $\bigoplus_{i \in I}X_i \cong \prod_{i \in I}X_i$.
\end{lem}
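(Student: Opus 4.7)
The plan is to exploit the fact that both limits and colimits in the functor category $\lMod{Q,\,\alg}$ are computed pointwise. More precisely, for each $q \in Q$ the evaluation functor $\Eq{q}\colon \lMod{Q,\,\alg}\to \lMod{\alg}$ is, by \ref{recall-F-G}, both a left adjoint (to $\Gq{q}$) and a right adjoint (to $\Fq{q}$), so it preserves all small coproducts and all small products.

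Consequently, for every $q \in Q$ one has
\begin{equation*}
  \Big(\bigoplus_{i \in I} X_i\Big)(q) \,\cong\, \bigoplus_{i \in I} X_i(q)
  \qquad \text{and} \qquad
  \Big(\prod_{i \in I} X_i\Big)(q) \,\cong\, \prod_{i \in I} X_i(q)\;.
\end{equation*}
By the local finiteness assumption, only finitely many of the modules $X_i(q)$ are nonzero, and for a finite family the canonical morphism from the coproduct to the product is an isomorphism in $\lMod{\alg}$. Therefore the canonical morphism $\bigoplus_{i \in I} X_i \to \prod_{i \in I} X_i$ in $\lMod{Q,\,\alg}$ becomes an isomorphism after evaluating at every object $q \in Q$. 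Since isomorphisms in a functor category are detected pointwise, this canonical morphism is itself an isomorphism, which is exactly the claim.

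The only subtlety is to verify that the comparison map one uses is the canonical one induced from the universal properties of coproduct and product (so that pointwise it really is the standard finite comparison map), but this is immediate from the fact that $\Eq{q}$ preserves both universal constructions. There is no significant obstacle beyond this bookkeeping.
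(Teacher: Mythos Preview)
Your proof is correct and is exactly the argument the paper has in mind: the paper's own proof consists of the single word ``Straightforward'', and what you wrote is precisely the straightforward verification---evaluate pointwise, use that only finitely many $X_i(q)$ are nonzero, and conclude via the canonical comparison map.
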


\begin{proof}
  Straightforward.
\end{proof}

\begin{prp}
  \label{prp:F-G-locally-finite}
  Assume that $Q$ is locally bounded, that is, condition \eqref{locbd} in \ref{Bbbk} holds. For every family $\{M_q\}_{q \in Q}$ of objects in $\lMod{\alg}$, there are isomorphisms:
\begin{equation*}
  \textstyle
  \bigoplus_{q \in Q} \Fq{q}(M_q) \,\cong\,
  \prod_{q \in Q} \Fq{q}(M_q)
  \qquad \text{and} \qquad
  \bigoplus_{q \in Q} \Gq{q}(M_q) \,\cong\,  
  \prod_{q \in Q} \Gq{q}(M_q)\;.
\end{equation*}  
\end{prp}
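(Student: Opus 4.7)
The plan is to reduce everything to the previous lemma on locally finite families. Concretely, I would show that for each family $\{M_q\}_{q \in Q}$ in $\lMod{\alg}$, both families $\{\Fq{q}(M_q)\}_{q \in Q}$ and $\{\Gq{q}(M_q)\}_{q \in Q}$ are locally finite in the sense of the preceding definition, and then invoke \lemref{locally-finite}.

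To verify local finiteness, fix an object $p \in Q$ and inspect the evaluation at $p$. By the formulas in \ref{recall-F-G},
\begin{equation*}
  \Fq{q}(M_q)(p) \,=\, Q(q,p) \otimes_{\Bbbk} M_q
  \qquad \text{and} \qquad
  \Gq{q}(M_q)(p) \,=\, \Hom{\Bbbk}{Q(p,q)}{M_q}\;.
\end{equation*}
Thus $\Fq{q}(M_q)(p) = 0$ whenever $Q(q,p) = 0$, and $\Gq{q}(M_q)(p) = 0$ whenever $Q(p,q) = 0$. Consequently,
\begin{equation*}
  \{q \in Q \mid \Fq{q}(M_q)(p) \neq 0\} \,\subseteq\, \operatorname{N}_-(p)
  \qquad \text{and} \qquad
  \{q \in Q \mid \Gq{q}(M_q)(p) \neq 0\} \,\subseteq\, \operatorname{N}_+(p)\;,
\end{equation*}
and both right-hand sides are finite by the local boundedness assumption \eqref{locbd} in \ref{Bbbk}. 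Hence both families are locally finite.

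Applying \lemref{locally-finite} to each family then yields the two asserted isomorphisms. There is no serious obstacle: the whole argument is a bookkeeping reduction, and the only substantive input is local boundedness of $Q$, which is exactly the hypothesis made available.
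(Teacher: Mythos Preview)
Your proof is correct and follows essentially the same approach as the paper: reduce to \lemref{locally-finite} by showing that the families $\{\Fq{q}(M_q)\}_{q \in Q}$ and $\{\Gq{q}(M_q)\}_{q \in Q}$ are locally finite, using the formulas from \ref{recall-F-G} and the local boundedness of $Q$. The paper's argument is identical in structure, only treating the first family explicitly and leaving the second as analogous.
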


\begin{proof}
  By \lemref{locally-finite} it suffices to argue that the families
$\{\Fq{q}(M_q)\}_{q \in Q}$ and $\{\Gq{q}(M_q)\}_{q \in Q}$ are locally finite. We only consider the first family. It must be proved that for every $r \in Q$ the set $\{q \in Q \,|\, \Fq{q}(M_q)(r) \neq 0\}$ is finite. By definition one has $\Fq{q}(M_q)(r) = Q(q,r) \otimes_\Bbbk M_q$, see~\ref{recall-F-G}, so the desired conlusion follows from the local boundedness of $Q$, which yields that the set $  \operatorname{N}_-(r)=\{q \in Q \;|\; Q(q,r) \neq 0\}$ is finite. 
\end{proof}

\begin{dfn}
  \label{dfn:objectwise-split}
  A short exact sequence $\xi \,=\, 0 \to X' \to X \to X'' \to 0$ in $\lMod{Q,\,\alg}$ is said to be \emph{objectwise split} if the sequence $\Eq{q}(\xi) \,=\, \xi(q) \,=\, 0 \to X'(q) \to X(q) \to X''(q) \to 0$ splits in $\lMod{\alg}$ for every object $q \in Q$.
\end{dfn}

We end by proving \thmref{closure-properties} announced in the beginning of this section. This result plays an important role in the proof of \prpref{conflation}, which yields certain canonical conflations in the exact categories ${}^\perp\lE{}$ and $\lE{}^\perp$.

\begin{thm}
  \label{thm:closure-properties}
  Assume that \stpref{nilpotent} holds and let $\xi \,=\, 0 \to X' \to X \to X'' \to 0$ be a short exact sequence in $\lMod{Q,\,\alg}$. For the class ${}^\perp\lE{}$ the following assertions hold:
  \begin{prt}
  \item Assume that $X'' \in {}^\perp\lE{}$. One has $X' \in {}^\perp\lE{}$ if and only if $X \in {}^\perp\lE{}$.
  \item Assume that $\xi$ is objectwise split. If one has $X', X \in {}^\perp\lE{}$, then $X'' \in {}^\perp\lE{}$.  
  \end{prt}
Dually, for the class $\lE{}^\perp$ the following assertions hold:
  \begin{prt}
  \setcounter{prt}{2}
  \item Assume that $X' \in \lE{}^\perp$. One one has $X \in \lE{}^\perp$ if and only if $X'' \in \lE{}^\perp$.
  \item Assume that $\xi$ is objectwise split. If one has $X, X'' \in \lE{}^\perp$, then $X' \in \lE{}^\perp$.  
  \end{prt}  
\end{thm}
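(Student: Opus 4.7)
Parts \prtlbl{a} and \prtlbl{c} follow from the heredity of the cotorsion pairs $({}^\perp\lE{},\lE{})$ and $(\lE{},\lE{}^\perp)$ recalled in \ref{summary-of-HJ2021}. For \prtlbl{a}: fix $E \in \lE{}$ and apply $\Hom{Q,\,\alg}{-}{E}$ to $\xi$. Since $X'' \in {}^\perp\lE{}$, heredity yields $\Ext{Q,\,\alg}{n}{X''}{E} = 0$ for all $n \geq 1$, so the long exact Ext-sequence collapses to the natural isomorphism $\Ext{Q,\,\alg}{1}{X}{E} \cong \Ext{Q,\,\alg}{1}{X'}{E}$; varying $E$ over $\lE{}$ gives the biconditional. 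Part \prtlbl{c} is dual, using $\Hom{Q,\,\alg}{E}{-}$ and heredity of $(\lE{},\lE{}^\perp)$.

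For part \prtlbl{b}, the target is $\Ext{Q,\,\alg}{1}{X''}{E} = 0$ for every $E \in \lE{}$. The long exact sequence from $\xi$, together with $\Ext{Q,\,\alg}{1}{X}{E} = 0$ (from $X \in {}^\perp\lE{}$), identifies this Ext group with $\Coker(\Hom{Q,\,\alg}{X}{E} \to \Hom{Q,\,\alg}{X'}{E})$, reducing the task to showing that every $\varphi \colon X' \to E$ extends along $X' \hookrightarrow X$ to a morphism $X \to E$.

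The plan is to imitate the classical degreewise-split argument for semi-projective complexes (see \exaref{Ch-semi}). Choose an injective resolution $0 \to E \to I^\bullet$ in $\lMod{Q,\,\alg}$; under \stpref{nilpotent}, each $I^i$ is a product of objects of the form $\Gq{q}(J)$ for $J \in \lInj{\alg}$ (cf.~\cite[Thm.~7.29]{HJ2021}). Since $\Hom{Q,\,\alg}{-}{\Gq{q}(J)} \cong \Hom{\alg}{-(q)}{J}$ by the adjunction of \ref{recall-F-G}, the objectwise-split hypothesis makes $\Hom{Q,\,\alg}{-}{I^i}$ carry $\xi$ to a split short exact sequence of $\Bbbk$-modules for each $i$, so the resulting short exact sequence of cochain complexes
\[ 0 \to \Hom{Q,\,\alg}{X''}{I^\bullet} \to \Hom{Q,\,\alg}{X}{I^\bullet} \to \Hom{Q,\,\alg}{X'}{I^\bullet} \to 0 \]
is degreewise split, and by heredity its rightmost two complexes are acyclic in positive degrees. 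Given a degree-$0$ cocycle $\varphi_0 \colon X' \to I^0$ representing $\varphi$, lift it to $\psi \colon X \to I^0$ via the injectivity of $I^0$; then $d\psi$ vanishes on $X'$ and factors through a cocycle $\tilde d \colon X'' \to I^1$ representing the class in $\Ext{Q,\,\alg}{1}{X''}{E}$ that obstructs the extension. The \textbf{main obstacle} is that the degreewise-split structure alone does not force $[\tilde d] = 0$ (as a toy example with acyclic $B$ shows); one must additionally exploit the nilpotency of the pseudo-radical $\mathfrak{r}$ from \stpref{nilpotent} to perform an induction along the cosyzygies $E = Z^0, Z^1 = I^0/Z^0, Z^2 = I^1/Z^1, \ldots$ (each of which remains in $\lE{}$ by thickness) and assemble a global natural extension.

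Part \prtlbl{d} is dual to \prtlbl{b}: use a projective resolution of $E$ by objects of the form $\Fq{q}(P)$ for $P \in \lPrj{\alg}$, apply $\Hom{Q,\,\alg}{P^i}{-}$, and invoke the objectwise-split hypothesis to obtain the dual short exact sequence of complexes.
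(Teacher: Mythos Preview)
Parts \prtlbl{a} and \prtlbl{c} are fine and match the paper. Your reduction in part \prtlbl{b} to surjectivity of $\Hom{Q,\,\alg}{X}{E} \to \Hom{Q,\,\alg}{X'}{E}$ is also correct and is exactly where the paper begins. The problem is what comes next.

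Your injective-resolution argument is circular. Once you observe that $\Hom{Q,\,\alg}{-}{I^i}$ takes $\xi$ to a split sequence, the resulting short exact sequence of Hom-complexes yields on cohomology nothing more than the long exact Ext-sequence you already had from $\xi$; it does not produce new vanishing. The obstruction class $[\tilde d] \in \Ext{Q,\,\alg}{1}{X''}{E}$ you isolate is literally the element you are trying to kill, and your proposed fix---an induction on cosyzygies powered by nilpotency of $\mathfrak{r}$---has no content: nilpotency of $\mathfrak{r}$ says nothing about the length of injective resolutions of objects in $\lE{}$, and the cosyzygies $Z^i$ do not become simpler as $i$ grows. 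There is no terminating induction here.

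The paper's argument avoids this trap by going in the opposite direction: take a \emph{projective} cover $0 \to E' \to P \to E \to 0$ with $P \in \lPrj{Q,\,\alg}$ and $E' \in \lE{}$. The hypothesis $X' \in {}^\perp\lE{}$ (which you never actually use beyond the initial reduction) now gives $\Ext{Q,\,\alg}{1}{X'}{E'}=0$, so every $\varphi \colon X' \to E$ lifts to $X' \to P$. It therefore suffices to show $\Hom{Q,\,\alg}{\alpha}{P}$ is surjective. Here is the point you are missing: every projective $P$ is a summand of a coproduct $\bigoplus_q \Fq{q}(\alg)^{(I_q)}$, and by \lemref{F-G-iso} and \prpref{F-G-locally-finite} this coproduct is simultaneously a \emph{product} of objects $\Gq{p}(M)$. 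For each such factor the adjunction gives $\Hom{Q,\,\alg}{\alpha}{\Gq{p}(M)} \cong \Hom{\alg}{\alpha(p)}{M}$, which is surjective because $\alpha(p)$ is split monic. This is where the objectwise-split hypothesis does its work---on a single projective object, not along an infinite resolution.
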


\begin{proof}
  Parts \prtlbl{a} and \prtlbl{c} follow immediately from the fact that the cotorsion pairs $({}^\perp\lE{},\lE{})$ and $(\lE{},\lE{}^\perp)$ are hereditary, see \ref{summary-of-HJ2021} (or \cite[Thm.~4.4]{HJ2021}). It remains to show parts \prtlbl{b} and \prtlbl{d}; we only prove \prtlbl{b} as the proof of \prtlbl{d} is dual. Write $\alpha$ for the monomorphism $X' \to X$ in the given exact sequence $\xi$. By assumption, $\alpha(q)$ is split monic in $\lMod{\alg}$ for every $q \in Q$ and the objects $X'$ and $X$ belong to ${}^\perp\lE{}$. It must be shown that $\Ext{Q,\,\alg}{1}{X''}{E}=0$ holds for every $E \in \lE{}$. Fix $E \in \lE{}$. Application of the functor $\Hom{Q,\,\alg}{-}{E}$ to the sequence $\xi$ yields an exact sequence,
\begin{equation*}
  \xymatrix@C=1.7pc{
    \Hom{Q,\,\alg}{X}{E} \ar[rr]^-{\Hom{Q,\,\alg}{\alpha}{E}} & &
    \Hom{Q,\,\alg}{X'}{E} \ar[r] &
    \Ext{Q,\,\alg}{1}{X''}{E} \ar[r] &
    \Ext{Q,\,\alg}{1}{X}{E} = 0
  },
\end{equation*}  
so it suffices to argue that the homomorphism $\Hom{Q,\,\alg}{\alpha}{E}$ is surjective. The category $\lMod{Q,\,\alg}$ has enough projectives, see e.g.~\cite[Prop.~3.12]{HJ2021}, so there exists an  exact sequence \smash{$0 \to E' \to P \xrightarrow{\pi} E \to 0$} with $P$ projective. As $P \in \lPrj{Q,\,\alg} \subseteq \lE{}$ it follows from \cite[Thm.~4.4]{HJ2021} that $E' \in \lE{}$. Application of $\Hom{Q,\,\alg}{X'}{-}$ to this sequence yields an exact sequence,
\begin{equation*}
  \xymatrix@C=1.7pc{
    \Hom{Q,\,\alg}{X'}{P} \ar[rr]^-{\Hom{Q,\,\alg}{X'}{\pi}} & &
    \Hom{Q,\,\alg}{X'}{E} \ar[r] &
    \Ext{Q,\,\alg}{1}{X'}{E'} = 0
  },
\end{equation*}  
so the map $\Hom{Q,\,\alg}{X'}{\pi}$ is surjective. In view of this and of the commutative diagram
\begin{equation*}
  \xymatrix@C=4pc{
    \Hom{Q,\,\alg}{X}{P} 
    \ar[d]_-{\Hom{Q,\,\alg}{\alpha}{P}}
    \ar[r]^-{\Hom{Q,\,\alg}{X}{\,\pi}} & 
    \Hom{Q,\,\alg}{X}{E}
    \ar[d]^-{\Hom{Q,\,\alg}{\alpha}{E}}
    \\
    \Hom{Q,\,\alg}{X'}{P} 
    \ar@{->>}[r]^-{\Hom{Q,\,\alg}{X'}{\,\pi}} & 
    \Hom{Q,\,\alg}{X'}{E}\;,\mspace{-8mu}
  }
\end{equation*}  
surjectivity of $\Hom{Q,\,\alg}{\alpha}{E}$ will follow if we can prove surjectivity of $\Hom{Q,\,\alg}{\alpha}{P}$. Recall from \cite[Prop.~3.12(a)]{HJ2021} that $\{\Fq{q}(\alg)\}_{q \in Q}$ is a family of projective generators of $\lMod{Q,\,\alg}$, and hence the projective object $P$ is a direct summand in an object of the form $\bigoplus_{q \in Q}\Fq{q}(\alg)^{(I_q)}$ for suitable index sets $I_q$. Note that one has
\begin{equation*}
  \textstyle
  \bigoplus_{q \in Q}\Fq{q}(\alg)^{(I_q)}
  \,\cong\,
  \bigoplus_{q \in Q}\Fq{q}(\alg^{(I_q)})
  \,\cong\,
  \prod_{q \in Q}\Gq{\,\Serre(q)}(\alg^{(I_q)})\;.
\end{equation*}
The first isomorphism holds as each functor $\Fq{q}$ preserves coproducts; indeed, $\Fq{q}$ is a left adjoint by \ref{recall-F-G}. The second isomorphism follows from \lemref{F-G-iso}
 and \prpref{F-G-locally-finite}.
These considerations, and the fact that the functor $\Hom{Q,\,\alg}{\alpha}{-}$ preserves products, show that without loss of generality we may assume that $P$ has the form \smash{$P=\Gq{p}(\alg^{(I)})$} for some object $p \in Q$ and index set $I$. Set $M = \alg^{(I)}$ and note that one has
\begin{equation*}
  \Hom{Q,\,\alg}{\alpha}{P}
  \,=\,
  \Hom{Q,\,\alg}{\alpha}{\Gq{p}(M)}
  \,\cong\,
  \Hom{\alg}{\alpha(p)}{M}
\end{equation*}
since $\Gq{p}$ is right adjoint of the evaluation functor at $p$, see \ref{recall-F-G}. By assumption, $\alpha(p)$ is split monic, and hence $\Hom{\alg}{\alpha(p)}{M}$ is (split) surjective, as desired.  
\end{proof}

\begin{rmk}
  Easy examples show that the conclusions in parts \prtlbl{b} and \prtlbl{d} in \thmref{closure-properties} above fail is the exact sequence $\xi$ is not objectwise split.
\end{rmk}

\section{Compact objects in $\QSD{Q}{\alg}$}

\label{sec:Compact}

The purpose of this section is to prove Theorem~D from the Introduction. We will obtain this theorem as a special case of 
\thmref{HoM}, which is akin to Hovey's \cite[Cor.~7.4.4]{modcat}.

We begin with a familiar looking Ext-formula for the hom sets in the homotopy category of an abelian model category. This formula can certainly be found in various special cases in the literature. Even in the generality below, the formula is probably known to the experts; however, since we were not able to find a reference, we have included a proof. To parse the result, recall from \ref{amc} that $\mathscr{R}$ and $\mathscr{Q}$ denote the fibrant and cofibrant replacement functors.

\begin{prp}
  \label{prp:Ext-formula}
  Let $\mathcal{M}$ be an abelian category equipped with a hereditary abelian model structure. Consider the associated triangulated homotopy category $\operatorname{Ho}(\mathcal{M})$ whose translation functor we denote by $\upSigma$. For every $X,Y \in \mathcal{M}$ and $n>0$ there are isomorphisms:
  \begin{equation*}
    \Hom{\operatorname{Ho}(\mathcal{M})}{\upSigma^{-n}X}{Y}
    \,\cong\,
    \Ext{\mathcal{M}}{n}{\fibr \cofr X}{\fibr \cofr Y}
    \,\cong\,
    \Hom{\operatorname{Ho}(\mathcal{M})}{X}{\upSigma^nY}\;.
  \end{equation*}
\end{prp}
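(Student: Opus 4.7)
The plan is to transport the question along the triangulated equivalence $\Phi \colon \operatorname{Stab}(\mathcal{M}_{\mathrm{cf}}) \xrightarrow{\simeq} \operatorname{Ho}(\mathcal{M})$ from \eqref{equivalence}, whose quasi-inverse is (essentially) the bifibrant replacement $\fibr\cofr$, and then invoke Happel's Ext-formula in a Frobenius category. Write $\mathcal{F} = \mathcal{M}_{\mathrm{cf}} = \mathcal{Q}\cap\mathcal{R}$ and $\omega = \mathcal{Q}\cap\mathcal{W}\cap\mathcal{R}$ for the proj-injectives of the Frobenius structure on $\mathcal{F}$. Since $\Phi$ is triangulated, it identifies the translation on $\operatorname{Ho}(\mathcal{M})$ with the canonical Frobenius suspension $\upSigma$ on $\operatorname{Stab}(\mathcal{F})$, so for all $n$ I get
$$\Hom{\operatorname{Ho}(\mathcal{M})}{X}{\upSigma^n Y} \,\cong\, \Hom{\operatorname{Stab}(\mathcal{F})}{\fibr\cofr X}{\upSigma^n\fibr\cofr Y},$$
and analogously with $\upSigma^{-n}X$ in the first argument.

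By Happel's theorem \cite[Chap.~I\S2]{Happel} applied to the Frobenius category $\mathcal{F}$, there are natural isomorphisms
$$\Hom{\operatorname{Stab}(\mathcal{F})}{\upSigma^{-n}A}{B} \,\cong\, \Ext{\mathcal{F}}{n}{A}{B} \,\cong\, \Hom{\operatorname{Stab}(\mathcal{F})}{A}{\upSigma^n B}$$
for $A,B\in\mathcal{F}$ and $n\geq 1$, where $\Ext{\mathcal{F}}{n}{-}{-}$ is Yoneda Ext inside the exact category $\mathcal{F}$. Thus the proposition reduces, with $A=\fibr\cofr X$ and $B=\fibr\cofr Y$, to the claim that Ext in the exact subcategory coincides with Ext in the ambient abelian category,
$$\Ext{\mathcal{F}}{n}{A}{B} \,\cong\, \Ext{\mathcal{M}}{n}{A}{B} \qquad\text{for } A,B\in\mathcal{F},\ n\geq 1.$$

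This comparison --- the only substantive step --- I would prove by producing an injective coresolution of $B$ inside $\mathcal{F}$ whose terms are simultaneously proj-injective in $\mathcal{F}$ and $\Hom{\mathcal{M}}{A}{-}$-acyclic. Concretely, apply the complete cotorsion pair $(\mathcal{Q},\mathcal{W}\cap\mathcal{R})$ to $B$ to obtain a short exact sequence $0\to B\to J\to B'\to 0$ in $\mathcal{M}$ with $J\in\mathcal{W}\cap\mathcal{R}$ and $B'\in\mathcal{Q}$. Since $B\in\mathcal{Q}$ and $\mathcal{Q}={}^\perp(\mathcal{W}\cap\mathcal{R})$ is extension-closed, also $J\in\mathcal{Q}$, so $J\in\omega$. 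Heredity of the cotorsion pair $(\mathcal{Q}\cap\mathcal{W},\mathcal{R})$ means that $\mathcal{R}$ is closed under cokernels of monics with $\mathcal{R}$-kernel, so from $B,J\in\mathcal{R}$ I get $B'\in\mathcal{R}$, hence $B'\in\mathcal{F}$. Iterating yields an exact sequence $0\to B\to J_0\to J_1\to\cdots$ in $\mathcal{M}$ with every $J_i\in\omega$ and every syzygy in $\mathcal{F}$.

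Finally, heredity of $(\mathcal{Q},\mathcal{W}\cap\mathcal{R})$ together with $A\in\mathcal{Q}$ and $J_i\in\omega\subseteq\mathcal{W}\cap\mathcal{R}$ gives $\Ext{\mathcal{M}}{j}{A}{J_i}=0$ for all $j\geq 1$; hence the coresolution above is $\Hom{\mathcal{M}}{A}{-}$-acyclic and computes $\Ext{\mathcal{M}}{n}{A}{B}$. As $\Hom{\mathcal{F}}{A}{J_i}=\Hom{\mathcal{M}}{A}{J_i}$, the same complex manifestly computes $\Ext{\mathcal{F}}{n}{A}{B}$ as well, so the two Ext groups agree. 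The main obstacle is the bookkeeping that forces the coresolution to live simultaneously inside $\mathcal{F}$ and to consist of objects in $\omega$; the two heredity hypotheses on the Hovey triple get used asymmetrically, one to keep each cokernel $B'$ in $\mathcal{R}$ and the other to make each $J_i$ acyclic for $\Hom{\mathcal{M}}{A}{-}$. Beyond this, the proof is a formal combination of \eqref{equivalence} and Happel's theorem.
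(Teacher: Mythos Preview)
Your argument is correct. Both your proof and the paper's proof begin by transporting along the equivalence $\operatorname{Ho}(\mathcal{M})\simeq\operatorname{Stab}(\mathcal{M}_{\mathrm{cf}})$ and reduce to showing $\uHom{\mathcal{M}}{A}{\upSigma^n B}\cong\Ext{\mathcal{M}}{n}{A}{B}$ for bifibrant $A,B$, and both ultimately rely on the same short exact sequences $0\to B\to W\to\upSigma B\to 0$ with $W\in\omega$ and on heredity of $(\mathcal{Q},\mathcal{W}\cap\mathcal{R})$ for the dimension shifting.

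The organizational difference is that you factor the argument through the intermediate group $\Ext{\mathcal{F}}{n}{A}{B}$: you invoke Happel's formula $\uHom{\mathcal{M}}{A}{\upSigma^n B}\cong\Ext{\mathcal{F}}{n}{A}{B}$ as a black box, and then compare $\Ext{\mathcal{F}}{n}$ with $\Ext{\mathcal{M}}{n}$ by exhibiting a single $\omega$-coresolution that computes both. The paper instead proves the composite isomorphism directly by induction on $n$: the base case $n=1$ is handled by showing explicitly that $\Im{\Hom{\mathcal{M}}{A}{\theta}}=\mathcal{I}(A,\upSigma B)$ (in effect reproving the $n=1$ case of Happel's formula), and the inductive step uses the same sequence to shift $\Ext{\mathcal{M}}{n}{A}{\upSigma B}\cong\Ext{\mathcal{M}}{n+1}{A}{B}$. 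Your version is more modular and avoids the somewhat fiddly verification of $\Im{\Hom{\mathcal{M}}{A}{\theta}}=\mathcal{I}(A,\upSigma B)$; the paper's version is more self-contained in that it does not appeal to Happel's Ext-formula or to the fact that Yoneda Ext in an exact category with enough injectives is computed by injective coresolutions.
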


\begin{proof}
  Since $\upSigma$ is an autoequivalence on $\operatorname{Ho}(\mathcal{M})$, the abelian groups $\Hom{\operatorname{Ho}(\mathcal{M})}{\upSigma^{-n}X}{Y}$ and $\Hom{\operatorname{Ho}(\mathcal{M})}{X}{\upSigma^nY}$ are isomorphic, so we only need to prove the second of the asserted isomorphisms. Let $(\mathcal{Q},\mathcal{W},\mathcal{R})$ be the hereditary Hovey triple that corresponds to the given hereditary abelian model structure on $\mathcal{M}$ via \cite[Thm.~2.2]{Hovey02}. As explained in \ref{amc}, the~bi\-fibrant replacement functor $\fibr \cofr$ yields a triangulated equivalence $\operatorname{Ho}(\mathcal{M}) \to \operatorname{Stab}(\mathcal{M}_{\mathrm{cf}})$. We will use the standard notation $\underline{\operatorname{Hom}}_{\mathcal{M}}$ for the hom-sets in the stable category, that is, for bifibrant objects $A$ and $B$ in $\mathcal{M}$ one has
\begin{equation}
  \label{eq:uHom}
  \uHom{\mathcal{M}}{A}{B}
  \,=\, \Hom{\mathcal{M}}{A}{B}\big/\mathcal{I}(A,B)\;,
\end{equation}  
where $\mathcal{I}(A,B)$ is the subgroup of $\Hom{\mathcal{M}}{A}{B}$ consisting of morphisms $A \to B$ in $\mathcal{M}$ that factor through some object in $\mathcal{Q}\cap \mathcal{W} \cap \mathcal{R}$. As $\fibr \cofr \colon \operatorname{Ho}(\mathcal{M}) \to \operatorname{Stab}(\mathcal{M}_{\mathrm{cf}})$ is a triangulated equivalence there is an isomorphism,
\begin{equation*}
  \Hom{\operatorname{Ho}(\mathcal{M})}{X}{\upSigma^nY}
  \,\cong\,
  \uHom{\mathcal{M}}{\fibr \cofr X}{\upSigma^n \fibr \cofr Y}\;,
\end{equation*}
for every $X, Y \in \mathcal{M}$ and $n \in \mathbb{Z}$. In view of this isomorphism, the
second isomorphism in the Proposition will follow if we can establish an isomorphism
\begin{equation}
  \label{eq:Extn-iso}
  \uHom{\mathcal{M}}{A}{\upSigma^nB}
  \,\cong\,
  \Ext{\mathcal{M}}{n}{A}{B}
\end{equation}  
for all objects $A, B \in \mathcal{Q}\cap \mathcal{R}$ (for example $A=\fibr \cofr X$ and $B=\fibr \cofr Y$) and $n>0$.

To establish the isomorphism \eqref{Extn-iso}, we start with the case $n=1$. By definition of the translation functor $\upSigma$ in the stable category of a Frobenius category, see Happel \cite[Chap. I\S2]{Happel}, the (translated) object $\upSigma B \in \mathcal{Q}\cap \mathcal{R}$ fits into a short exact sequence in $\mathcal{M}$,
\begin{equation}
  \label{eq:BB}
  \xymatrix@C=1.5pc{
    0 \ar[r] & B \ar[r] & W \ar[r]^-{\theta} & \upSigma B \ar[r] & 0
  },
\end{equation}
where $W$ is in $\mathcal{Q}\cap \mathcal{W} \cap \mathcal{R}$. Application of the functor $\Hom{\mathcal{M}}{A}{-}$ to this sequence yields the exact sequence
\begin{equation*}
  \xymatrix@C=1.5pc{
    \Hom{\mathcal{M}}{A}{W} \ar[rr]^-{\Hom{\mathcal{M}}{A}{\theta}} & & \Hom{\mathcal{M}}{A}{\upSigma B} \ar[r] & \Ext{\mathcal{M}}{1}{A}{B} \ar[r] & \Ext{\mathcal{M}}{1}{A}{W}=0
  };
\end{equation*}
here $\Ext{\mathcal{M}}{1}{A}{W}$ is zero as $(\mathcal{Q}, \mathcal{W} \cap \mathcal{R})$ is a cotorsion pair, $A$ is in $\mathcal{Q} \cap \mathcal{R} \subseteq \mathcal{Q}$, and $W$ is in $\mathcal{Q}\cap \mathcal{W} \cap \mathcal{R} \subseteq \mathcal{W} \cap \mathcal{R}$. Thus,  $\Ext{\mathcal{M}}{1}{A}{B}$ is the cokernel of $\Hom{\mathcal{M}}{A}{\theta}$ and the isomorphism \eqref{Extn-iso}, still for $n=1$, will follow if we can show the equality
\begin{equation*}
  \Im{\Hom{\mathcal{M}}{A}{\theta}} \,=\, \mathcal{I}(A,\upSigma B)\;.
\end{equation*}
The inclusion ``$\subseteq$'' is clear as $W$ belongs to $\mathcal{Q}\cap \mathcal{W} \cap \mathcal{R}$. Conversely, let $\alpha \colon A \to \upSigma B$ be any morphism that factors through an object in $\mathcal{Q}\cap \mathcal{W} \cap \mathcal{R}$, i.e.~there is some factorization,
\begin{equation*}
  \xymatrix@!=0.7pc{
    A \ar@{.>}[dr]_-{\varphi} \ar[rr]^-{\alpha} & & \upSigma B
    \\
    & V \ar@{.>}[ur]_-{\psi} & 
  }
\end{equation*}
with $V \in \mathcal{Q}\cap \mathcal{W} \cap \mathcal{R}$. Application of the functor to $\Hom{\mathcal{M}}{V}{-}$ to the short exact sequence \eqref{BB} yields an exact sequence
\begin{equation*}
  \xymatrix@C=1.5pc{
    \Hom{\mathcal{M}}{V}{W} \ar[rr]^-{\Hom{\mathcal{M}}{V}{\theta}} & & \Hom{\mathcal{M}}{V}{\upSigma B} \ar[r] & \Ext{\mathcal{M}}{1}{V}{B} = 0 
  };
\end{equation*}
here $\Ext{\mathcal{M}}{1}{V}{B}$ is zero since $(\mathcal{Q} \cap \mathcal{W},\mathcal{R})$ is a cotorsion pair, $V$ is in $\mathcal{Q}\cap \mathcal{W} \cap \mathcal{R} \subseteq \mathcal{Q} \cap \mathcal{W}$, and $B$ is in $\mathcal{Q} \cap \mathcal{R} \subseteq \mathcal{R}$. Thus the homomorphism $\Hom{\mathcal{M}}{V}{\theta}$ is surjective, so there exists a morphism $\omega \colon V \to W$ in $\mathcal{M}$ with $\theta\omega = \psi$. Therefore $\alpha = \psi\varphi = \theta\omega\varphi = \Hom{\mathcal{M}}{A}{\theta}(\omega\varphi)$. This proves the other inclusion ``$\supseteq$'' and  hence the isomorphism in \eqref{Extn-iso} is proved for $n=1$.

Now assume, by induction, that for some $n>0$ an isomorphism \eqref{Extn-iso} exists for every $A,B \in \mathcal{Q} \cap \mathcal{R}$. As also $\upSigma B$ belongs to $\mathcal{Q} \cap \mathcal{R}$, the induction hypothesis yield:
\begin{equation}
  \label{eq:induction}
  \uHom{\mathcal{M}}{A}{\upSigma^{n+1}B}
  \,\cong\,
  \uHom{\mathcal{M}}{A}{\upSigma^{n}\upSigma B}
  \,\cong\,
  \Ext{\mathcal{M}}{n}{A}{\upSigma B}\;.
\end{equation} 
Moreover, by applying $\Hom{\mathcal{M}}{A}{-}$ to the short sequence \eqref{BB} we get an exact sequence
\begin{equation*}
  \xymatrix@C=1.5pc{
    0 = \Ext{\mathcal{M}}{n}{A}{W} \ar[r]
    & 
    \Ext{\mathcal{M}}{n}{A}{\upSigma B} \ar[r]
    & 
    \Ext{\mathcal{M}}{n+1}{A}{B} \ar[r]
    & 
    \Ext{\mathcal{M}}{n+1}{A}{W}=0
  };
\end{equation*}
note that $\Ext{\mathcal{M}}{i}{A}{W}=0$ for all $i \geqslant 1$ as
$(\mathcal{Q}, \mathcal{W} \cap \mathcal{R})$ is a \textsl{hereditary} cotorsion pair, $A$ is in $\mathcal{Q} \cap \mathcal{R} \subseteq \mathcal{Q}$, and $W$ is in $\mathcal{Q}\cap \mathcal{W} \cap \mathcal{R} \subseteq \mathcal{W} \cap \mathcal{R}$.  Hence $\Ext{\mathcal{M}}{n}{A}{\upSigma B} \cong \Ext{\mathcal{M}}{n+1}{A}{B}$, which in combination with   \eqref{induction} yields an isomorphism $\uHom{\mathcal{M}}{A}{\upSigma^{n+1}B} \cong \Ext{\mathcal{M}}{n+1}{A}{B}$.
\end{proof}

Next we recall some standard definitions.

\begin{dfn} 
  \label{dfn:compact}
  Let $\mathcal{A}$ be an additive category (e.g.~an abelian or a triangulated category) with set-indexed coproducts. An object $C \in \mathcal{A}$ is called \emph{compact} if the functor $\Hom{\mathcal{A}}{C}{-}$ preserves coproducts, that is, if the canonical map
  \begin{equation*}
    \textstyle
    \bigoplus_{i \in I}\Hom{\mathcal{A}}{C}{X_i}
    \longrightarrow
    \Hom{\mathcal{A}}{C}{\bigoplus_{i \in I}X_i}
  \end{equation*}
  is an isomorphism for every (set-indexed) family $\{X_i\}_{i \in I}$ of objects in $\mathcal{A}$. We set
\begin{equation*}
  \mathcal{A}^\mathrm{c} \,=\,
  \big\{ C \in \mathcal{A} \ \big| \ \text{$C$ is compact} \big\}\;.  
\end{equation*}
\end{dfn}

\begin{dfn}
  \label{dfn:compactly-generated}
  Let $\mathcal{T}$ be a triangulated category with set-indexed coproducts.
\begin{itemlist}
  \item One says that $\mathcal{T}$ is \emph{compactly generated} if there exists a set (as opposed to a proper class) $\mathbb{S}$ of compact objects in $\mathcal{T}$ with the property that the only object $X \in \mathcal{T}$ that satisfies $\Hom{\mathcal{T}}{S}{X}=0$ for every $S \in \mathbb{S}$ is $X = 0$. See Neeman \cite[Dfn.~1.7]{MR1308405}.
\end{itemlist}
Next, let $\mathcal{S}$ be a triangulated subcategory of $\mathcal{T}$.  
\begin{itemlist}
  \item One says that $\mathcal{S}$ is \emph{thick} (or \emph{{\'e}paisse}) if it is closed under direct summands in $\mathcal{T}$. See Neeman \cite[Dfn.~2.1.6]{Nee} (and Rickard \cite[Prop.~1.3]{MR1027750}).
  
  \item One says that $\mathcal{S}$ is \emph{localizing} if it is closed under coproducts in $\mathcal{T}$. Such a subcategory is thick, see B\"{o}kstedt and Neeman \cite[Dfn.~1.3, Rmk.~1.4, and Prop.~3.2]{MR1214458}.
\end{itemlist}
Finally, let $\mathbb{S}$ be any collection (usually a set) of objects in $\mathcal{T}$.
\begin{itemlist}
  \item Write $\operatorname{Thick}(\mathbb{S})$ for the \emph{thick subcategory generated by $\mathbb{S}$}, that is, the smallest thick triangulated subcategory of $\mathcal{T}$ containing $\mathbb{S}$.
  
  \item Write $\operatorname{Loc}(\mathbb{S})$ for the \emph{localizing subcategory generated by $\mathbb{S}$}, that is, the smallest localizing triangulated subcategory of $\mathcal{T}$ containing $\mathbb{S}$.
\end{itemlist}
\end{dfn}

The next two results, \lemref{compact} and \thmref{HoM}, are akin to Hovey's \cite[Thm.~7.4.3 and Cor.~7.4.4]{modcat}. Our proof of \thmref{HoM} uses the Ext-formula from \prpref{Ext-formula}.

\begin{lem}
  \label{lem:compact}
  Let $\mathcal{M}$ be a cocomplete abelian category equipped with a hereditary abelian model structure, and assume that the class $\mathcal{R}$ of fibrant objects is closed under coproducts in $\mathcal{M}$ (for example, if\, $\mathcal{R}=\mathcal{M}$). Let $C$ be an object in $\mathcal{M}$ that is both cofibrant and fibrant. If $C$ is~compact in $\mathcal{M}$, then $C$ is also compact in the homotopy category $\operatorname{Ho}(\mathcal{M})$.
\end{lem}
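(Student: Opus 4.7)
The plan is to transport the question from $\operatorname{Ho}(\mathcal{M})$ to the stable category $\operatorname{Stab}(\mathcal{M}_{\mathrm{cf}})$ via the triangulated equivalence $\fibr\cofr$ recalled in \ref{amc}, where coproducts are much easier to describe. Since $C$ is assumed bifibrant one may take $\fibr\cofr C=C$, so the equivalence simply gives $\Hom{\operatorname{Ho}(\mathcal{M})}{C}{Y}\cong \uHom{\mathcal{M}}{C}{\fibr\cofr Y}$ for every $Y\in\mathcal{M}$.

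Given a family $\{Y_i\}_{i\in I}$ in $\operatorname{Ho}(\mathcal{M})$, set $A_i=\fibr\cofr Y_i\in\mathcal{Q}\cap\mathcal{R}$. The first step is to show that the $\mathcal{M}$-coproduct $A=\coprod_iA_i$ is again bifibrant, so that it actually represents the coproduct in $\operatorname{Stab}(\mathcal{M}_{\mathrm{cf}})$ (and hence, via the equivalence, in $\operatorname{Ho}(\mathcal{M})$). For this I would use: $\mathcal{Q}$ is the left class of the cotorsion pair $(\mathcal{Q},\mathcal{W}\cap\mathcal{R})$, and since $\Ext^1(-,-)$ sends coproducts in the first variable to products, $\mathcal{Q}$ is closed under coproducts in $\mathcal{M}$; and $\mathcal{R}$ is closed under coproducts by hypothesis. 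Moreover the additive quotient functor $\mathcal{M}_{\mathrm{cf}}\to\operatorname{Stab}(\mathcal{M}_{\mathrm{cf}})$ preserves coproducts, so $A$ is the coproduct of $\{A_i\}$ in the stable category. Combining this with the equivalence of \ref{amc} identifies $\Hom{\operatorname{Ho}(\mathcal{M})}{C}{\coprod_i Y_i}\cong \uHom{\mathcal{M}}{C}{A}$.

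The main task is then to show the canonical map
\begin{equation*}
  \textstyle
  \Phi\colon\bigoplus_i \uHom{\mathcal{M}}{C}{A_i}\longrightarrow \uHom{\mathcal{M}}{C}{A}
\end{equation*}
is bijective. Compactness of $C$ in $\mathcal{M}$ gives the analogous isomorphism on the unquotiented $\Hom$-groups, and $\Phi$ is obtained by descending along the quotients by the ideal $\mathcal{I}$ of morphisms factoring through objects of $\mathcal{Q}\cap\mathcal{W}\cap\mathcal{R}$ (see \eqref{uHom}). Surjectivity of $\Phi$ follows at once by lifting representatives and invoking compactness of $C$ in $\mathcal{M}$. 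The critical point is injectivity: I would argue that for a tuple $([\alpha_i])_i$ (only finitely many $\alpha_i\neq 0$) whose image factors as $\sum_i\iota_i\alpha_i=\psi\varphi$ through some $V\in\mathcal{Q}\cap\mathcal{W}\cap\mathcal{R}$, each component $\alpha_i=p_i\psi\varphi$ likewise factors through $V$ (composing with the projection $p_i\colon A\to A_i$), so $[\alpha_i]=0$ in $\uHom{\mathcal{M}}{C}{A_i}$; conversely, a tuple with every $[\alpha_i]=0$ assembles into a morphism that factors through a finite direct sum of objects from $\mathcal{Q}\cap\mathcal{W}\cap\mathcal{R}$.

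The one genuine obstacle is confirming that this finite direct sum stays in $\mathcal{Q}\cap\mathcal{W}\cap\mathcal{R}$; this follows because $\mathcal{Q}$ and $\mathcal{R}$ are both closed under coproducts (as above) and $\mathcal{W}$ is thick in $\mathcal{M}$ by the Hovey triple axiom, so 2-out-of-3 applied to the split sequence $0\to V_1\to V_1\oplus V_2\to V_2\to 0$ yields $V_1\oplus V_2\in\mathcal{W}$. Once this is in place, $\Phi$ is bijective, proving that $C$ is compact in $\operatorname{Ho}(\mathcal{M})$.
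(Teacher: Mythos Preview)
Your argument is correct and follows the same route as the paper: pass to $\operatorname{Stab}(\mathcal{M}_{\mathrm{cf}})$ via the equivalence of \ref{amc}, use closure of $\mathcal{Q}$ and $\mathcal{R}$ under coproducts to see that the $\mathcal{M}$-coproduct of bifibrant objects computes the coproduct in the stable category, and then check that the canonical isomorphism $\sigma$ on $\Hom_{\mathcal{M}}$ descends to the stable $\underline{\Hom}$. The paper leaves the last verification (``$\sigma$ restricts to an isomorphism $\bigoplus_i\mathcal{I}(C,X_i)\to\mathcal{I}(C,\bigoplus_iX_i)$'') to the reader; your projection argument via $p_i\colon\coprod A_i\to A_i$ and the finite-direct-sum argument are exactly how one fills this in. One small remark: the ``conversely'' clause you place under injectivity is really what makes $\Phi$ well-defined, not injective, but the content is right and both directions are needed; also, for the claim that the quotient functor preserves \emph{infinite} coproducts you implicitly need $\mathcal{Q}\cap\mathcal{W}\cap\mathcal{R}$ closed under arbitrary (not just finite) coproducts, which holds since $\mathcal{Q}\cap\mathcal{W}={}^\perp\mathcal{R}$ is the left class of a cotorsion pair.
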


\begin{proof}
Let $(\mathcal{Q},\mathcal{W},\mathcal{R})$ be the hereditary Hovey triple that corresponds to the given hereditary abelian model structure on $\mathcal{M}$. As the object $C \in \mathcal{M}$ is both cofibrant and fibrant, it can be viewed as an object in the stable category \smash{$\operatorname{Stab}(\mathcal{M}_{\mathrm{cf}}) = (\mathcal{Q} \cap \mathcal{R})\big/(\mathcal{Q}\cap \mathcal{W} \cap \mathcal{R})$}. As this category is equivalent to $\operatorname{Ho}(\mathcal{M})$, it suffices to show that compactness of $C$ in~$\mathcal{M}$ implies compactness of $C$ in $\operatorname{Stab}(\mathcal{M}_{\mathrm{cf}})$. Let $\{X_i\}_{i \in I}$ be any family of objects in $\mathcal{Q} \cap \mathcal{R}$. Assuming that $C$ is compact in $\mathcal{M}$, we know that the canonical homomorphism 
\begin{equation}
  \label{eq:sigma}
    \textstyle
    \sigma \colon \bigoplus_{i \in I}\Hom{\mathcal{M}}{C}{X_i}
    \longrightarrow
    \Hom{\mathcal{M}}{C}{\bigoplus_{i \in I}X_i}
\end{equation}
is bijective. By definition, $\sigma$ is given by $(\alpha_i)_{i \in I} \mapsto \sum_{i \in I}\iota_i\,\alpha_i$ where $\iota_{\!j} \colon X_{\!j} \to \bigoplus_{i \in I}X_i$ is the canonical injection. Recall that in any model category, the class $\mathcal{Q}$ of cofibrant objects is closed under coproducts while the class $\mathcal{R}$ of fibrant objects is closed under products. We have assumed that that $\mathcal{R}$ is closed under coproducts in $\mathcal{M}$, and hence so is the intersection $\mathcal{Q} \cap \mathcal{R}$. This implies that the coproduct of the family $\{X_i\}_{i \in I}$ computed in $\operatorname{Stab}(\mathcal{M}_{\mathrm{cf}})$ is just $\bigoplus_{i \in I}X_i$\,, i.e.~the coproduct in $\mathcal{M}$, and the canonical injections are \mbox{$[\iota_{\!j}] \colon X_{\!j} \to \bigoplus_{i \in I}X_i$}\,, where $[\,\cdot\,] \colon \mathcal{Q} \cap \mathcal{R} \to \operatorname{Stab}(\mathcal{M}_{\mathrm{cf}})$ denotes the canonical functor. Consequently, for the category $\mathcal{A}= \operatorname{Stab}(\mathcal{M}_{\mathrm{cf}})$, the canonical homomorphism from \dfnref{compact} takes the form
  \begin{align*}
    \textstyle
    \tau \colon \bigoplus_{i \in I}\uHom{\mathcal{M}}{C}{X_i}
    &\longrightarrow
    \textstyle
    \uHom{\mathcal{M}}{C}{\bigoplus_{i \in I}X_i}
    \qquad \text{given by}
    \\
    \textstyle
    ([\alpha_i])_{i \in I} 
    &\longmapsto 
    \textstyle
    \sum_{i \in I}[\iota_i][\alpha_i] \,=\, 
    \big[ \sum_{i \in I}\iota_i\,\alpha_i \big]\;.
  \end{align*}
Here we have used the same notation as in the proof of \prpref{Ext-formula}, see \eqref{uHom}. We want to show that $\tau$ is an isomorphism. In view of the definitions of $\tau$ and the hom-sets $\underline{\operatorname{Hom}}_{\mathcal{M}}$, this amounts to showing that the isomorphism $\sigma$ from \eqref{sigma} restricts to an isomorphism
  \begin{equation*}
    \textstyle
    \sigma \colon
    \bigoplus_{i \in I}\mathcal{I}(C,X_i)
    \longrightarrow
    \mathcal{I}(C,\bigoplus_{i \in I}X_i)\;.
  \end{equation*}
This is straightforward to prove and left to the reader.
\end{proof}

\begin{thm}
  \label{thm:HoM}
  Let $\mathcal{M}$ be a cocomplete abelian category equipped with a hereditary abelian model structure induced by the Hovey triple $(\mathcal{Q},\mathcal{W}, \mathcal{R})$. Assume that:
\begin{rqm}
\item The model structure is projective, i.e.~one has $\mathcal{R}=\mathcal{M}$.
\item The cotorsion pair $(\mathcal{Q},\mathcal{W})$ is generated by a set of compact objects in $\mathcal{M}$, i.e.~there exists a subset\, $\mathbb{S} \subseteq \mathcal{M}^\mathrm{c}$ with\, $\mathbb{S}^\perp = \mathcal{W}$.
\end{rqm}  
In this case, the homotopy category of $\mathcal{M}$ is a compactly generated triangulated \mbox{category}; in fact, $\mathbb{S}$ is a set of compact objects in $\operatorname{Ho}(\mathcal{M})$ that generates $\operatorname{Ho}(\mathcal{M})$. In particular, 
\begin{equation*}
  \operatorname{Ho}(\mathcal{M})
  \,=\, 
  \operatorname{Loc}(\mathbb{S})
  \qquad \text{and} \qquad
  \operatorname{Ho}(\mathcal{M})^\mathrm{c}
  \,=\, 
  \operatorname{Thick}(\mathbb{S})\;.
\end{equation*}
\end{thm}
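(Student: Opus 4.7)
The plan is to use \lemref{compact} to transfer compactness from $\mathcal{M}$ to $\operatorname{Ho}(\mathcal{M})$ and to combine the Ext-formula of \prpref{Ext-formula} with the identity $\mathbb{S}^\perp = \mathcal{W}$ to establish generation. The two displayed equalities then follow from standard theory of compactly generated triangulated categories.

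First I would check that every $S \in \mathbb{S}$ is bifibrant. Taking left perps in $\mathbb{S}^\perp = \mathcal{W}$ yields $\mathbb{S} \subseteq {}^\perp(\mathbb{S}^\perp) = {}^\perp\mathcal{W} = \mathcal{Q}$, so every $S \in \mathbb{S}$ is cofibrant, and $S$ is automatically fibrant since $\mathcal{R} = \mathcal{M}$. Because $\mathcal{R} = \mathcal{M}$, the hypothesis of \lemref{compact} that $\mathcal{R}$ be closed under coproducts is trivially satisfied, and the lemma gives that each $S \in \mathbb{S}$ is compact in $\operatorname{Ho}(\mathcal{M})$ as well.

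For generation, let $Y \in \operatorname{Ho}(\mathcal{M})$ satisfy $\Hom{\operatorname{Ho}(\mathcal{M})}{S}{Y} = 0$ for every $S \in \mathbb{S}$. Setting $X = \upSigma^{-1}Y$ and replacing $X$ by its cofibrant (hence bifibrant, as $\mathcal{R} = \mathcal{M}$) replacement $\cofr X$, I may assume $X \in \mathcal{Q} \cap \mathcal{R}$; then \prpref{Ext-formula} applied with $n = 1$ yields
\[
  \Ext{\mathcal{M}}{1}{S}{X} \,\cong\, \Hom{\operatorname{Ho}(\mathcal{M})}{S}{\upSigma X} \,\cong\, \Hom{\operatorname{Ho}(\mathcal{M})}{S}{Y} \,=\, 0
\]
for every $S \in \mathbb{S}$. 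Hence $X \in \mathbb{S}^\perp = \mathcal{W}$, so $X$ is trivial, i.e.~$X \cong 0$ in $\operatorname{Ho}(\mathcal{M})$, and therefore $Y \cong \upSigma X \cong 0$ as well.

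Finally, the two displayed equalities are standard consequences: the previous step shows that the localizing subcategory $\operatorname{Loc}(\mathbb{S})$ of $\operatorname{Ho}(\mathcal{M})$ has zero right orthogonal, so $\operatorname{Ho}(\mathcal{M}) = \operatorname{Loc}(\mathbb{S})$ by Neeman's theorem on compactly generated triangulated categories, and the equality $\operatorname{Ho}(\mathcal{M})^{\mathrm{c}} = \operatorname{Thick}(\mathbb{S})$ is then the classical Neeman--Ravenel theorem on compact objects in such categories. The main obstacle I anticipate is the clean handling of the bifibrant replacement together with the shift in the Ext-formula (in particular, making sure that the $\upSigma$ appearing in \prpref{Ext-formula} can be absorbed by moving from $Y$ to $\upSigma^{-1}Y$); once this is in place, the rest is a direct application of the hypothesis $\mathbb{S}^\perp = \mathcal{W}$.
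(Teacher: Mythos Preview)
Your proposal is correct and follows essentially the same route as the paper: you show $\mathbb{S} \subseteq \mathcal{Q}$ via ${}^\perp(\mathbb{S}^\perp) = {}^\perp\mathcal{W} = \mathcal{Q}$, invoke \lemref{compact} for compactness in $\operatorname{Ho}(\mathcal{M})$, use the Ext-formula of \prpref{Ext-formula} together with the shift $X = \upSigma^{-1}Y$ to reduce generation to $\mathbb{S}^\perp = \mathcal{W}$, and finish with Neeman's theorem. The paper does exactly this; the only cosmetic difference is that it writes the key isomorphism as $\Hom{\operatorname{Ho}(\mathcal{M})}{S}{Y} \cong \Ext{\mathcal{M}}{1}{S}{\cofr(\upSigma^{-1}Y)}$ directly, noting that $S$ and $\cofr(\upSigma^{-1}Y)$ serve as their own bifibrant replacements since $\mathcal{R} = \mathcal{M}$, rather than first renaming the cofibrant replacement as $X$.
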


\begin{proof}
  First observe that $\mathbb{S}$ is contained in $\mathcal{Q}$ since $\mathbb{S}^\perp = \mathcal{W}$ implies that $\mathbb{S} \subseteq {}^\perp(\mathbb{S}^\perp) = {}^\perp\mathcal{W} = \mathcal{Q}$. Evidently, $\mathbb{S}$ is also contained in $\mathcal{R}=\mathcal{M}$. We have assumed that $\mathbb{S} \subseteq \mathcal{M}^\mathrm{c}$, so \lemref{compact} implies that $\mathbb{S}$ is a set of compact objects in $\operatorname{Ho}(\mathcal{M})$. To see that $\mathbb{S}$ generates $\operatorname{Ho}(\mathcal{M})$, in the sense of   \dfnref{compactly-generated}, note that for every $S \in \mathbb{S}$ and $Y \in \mathcal{M}$ one has:
  \begin{equation*}
    \Hom{\operatorname{Ho}(\mathcal{M})}{S}{Y} 
    \,\cong\,
    \Hom{\operatorname{Ho}(\mathcal{M})}{S}{\upSigma(\upSigma^{-1}Y)}
    \,\cong\,
    \Ext{\mathcal{M}}{1}{S}{\cofr(\upSigma^{-1}Y)}\;.
  \end{equation*}  
Indeed, the first isomorphism is trivial and the second follows from \prpref{Ext-formula}, where it has been used that every object in $\mathcal{M}$ is a fibrant replacement of itself (since $\mathcal{R}=\mathcal{M}$) and $S$ is a cofibrant replacement of itself (since $\mathbb{S} \subseteq \mathcal{Q}$). Thus, if \smash{$\Hom{\operatorname{Ho}(\mathcal{M})}{S}{Y}=0$} holds for every $S \in \mathbb{S}$, then $\cofr(\upSigma^{-1}Y)$ belongs to $\mathbb{S}^\perp = \mathcal{W}$, and hence one has $\cofr(\upSigma^{-1}Y) =0$ in $\operatorname{Ho}(\mathcal{M})$. But in $\operatorname{Ho}(\mathcal{M})$ there is an isomorphism $\cofr(\upSigma^{-1}Y) \cong \upSigma^{-1}Y$, and consequently $\upSigma^{-1}Y = 0$ in $\operatorname{Ho}(\mathcal{M})$, which implies that $Y = 0$. This proves that $\mathbb{S}$ generates $\operatorname{Ho}(\mathcal{M})$.
  
The last assertion is now an immediate consequence of Neeman \cite[Thm.~2.1]{MR1308405}.
\end{proof}

To parse the next result, recall that the definition of the (co)homology functors from \ref{consequences-of-srp} only requires the Strong Retraction Property (condition \eqref{strong-retraction} in \ref{Bbbk}). Recall also the functors $\Cq{q}$ and $\Kq{q}$ from \ref{recall-C-K}.

\begin{prp}
  \label{prp:H-preserves}
  Assume that $Q$ is Hom-finite, locally bounded and satisfies the Strong Retraction Property, i.e.~conditions \eqref{Homfin}, \eqref{locbd}, and \eqref{strong-retraction} in \ref{Bbbk} hold, and that $\Bbbk$ is noetherian. For every $q \in Q$ and $i \geqslant 0$ the (co)homology functors
  \begin{equation*}
     \cH[i]{q},\, \hH[i]{q} \colon \lMod{Q,\,\alg} \longrightarrow \lMod{\alg}
  \end{equation*}
  preserve products and filtered colimits. In particular, the left adjoint functor $\Cq{q}$ preserves products and right adjoint functor $\Kq{q}$ preserves filtered colimits.
\end{prp}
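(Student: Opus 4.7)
The plan is to compute $\cH[i]{q} = \Ext{Q}{i}{\stalkco{q}}{?}$ from a projective resolution of $\stalkco{q}$ in $\lMod{Q}$ whose terms are finite direct sums of the representables $\Fq{p}(\Bbbk)$, and analogously for $\hH[i]{q} = \Tor{Q}{i}{\stalkcn{q}}{?}$ via a resolution in $\rMod{Q}$ by finite sums of the right representables $Q(-,p)$. Once such resolutions are in hand, every term of the resulting Hom-complex, respectively tensor-complex, reduces by Yoneda to a finite direct sum of evaluation functors $\Eq{p}$, and evaluation preserves all limits and colimits since these are computed pointwise in $\lMod{Q,\,\alg}$. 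Because products and filtered colimits are exact in $\lMod{\alg}$, this commutativity descends from the complex to its (co)homology, yielding the claim for every $i \geqslant 0$. The concluding statement about $\Cq{q} = \hH[0]{q}$ and $\Kq{q} = \cH[0]{q}$ is then immediate and supplements the automatic preservations inherited from adjointness.

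The heart of the matter is constructing such resolutions; I describe the procedure for $\stalkco{q}$, the case of $\stalkcn{q}$ being entirely dual in $\rMod{Q}$. Local boundedness bounds $\supp{\stalkco{q}}$ inside the finite set $\operatorname{N}_+(q)$, and Hom-finiteness together with $\Bbbk$ noetherian forces each $\stalkco{q}(p)$, being a subquotient of the finitely generated projective $\Bbbk$-module $Q(q,p)$, to be finitely generated over $\Bbbk$. Choosing finite $\Bbbk$-generating sets of $\stalkco{q}(p)$ at each $p$ in the support then yields a surjection $P_0 \twoheadrightarrow \stalkco{q}$ with $P_0 = \bigoplus_p \Fq{p}(\Bbbk^{m_p})$ a finite direct sum. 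Applying local boundedness once more, $P_0(r) = \bigoplus_p Q(p,r)^{m_p}$ is a finitely generated projective $\Bbbk$-module nonzero for only finitely many $r$; hence $\Ker{(P_0 \to \stalkco{q})}$ has finite support and, by noetherianness of $\Bbbk$, is pointwise finitely generated. Iterating produces $P_1, P_2, \ldots$ and the entire resolution.

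Granted these resolutions, one checks via the adjunction $(\Fq{p},\Eq{p})$ that $\Hom{Q}{\Fq{p}(\Bbbk^m)}{X} \cong X(p)^m$ in $\lMod{\alg}$, and a parallel computation gives $Q(-,p)^m \otimes_Q X \cong X(p)^m$; both expressions manifestly preserve products and filtered colimits in $X$. The main obstacle lies in the inductive construction above: one must verify that finitely many $\Bbbk$-generators selected vertex by vertex in the (finite) support of a $Q$-submodule $K \subseteq P_n$ actually generate $K$ as a $Q$-module. This is fine because, for each vertex $r$ in the support, one simply includes $r$ itself among the chosen vertices and picks $\Bbbk$-generators of $K(r)$ directly, so the resulting map from a finite sum of representables is surjective pointwise. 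This small verification is the only real work; everything else is a formality.
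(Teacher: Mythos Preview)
Your proposal is correct and follows essentially the same approach as the paper: build a projective resolution of $\stalkco{q}$ (resp.\ $\stalkcn{q}$) by finite direct sums of representables, reduce each term via Yoneda to a finite sum of evaluation functors, and then use that homology on $\operatorname{Ch}\,\alg$ preserves products and filtered colimits. The only cosmetic difference is that the paper obtains the resolution in one line by citing that $\lMod{Q}$ and $\rMod{Q}$ are locally noetherian with the representables as noetherian projective generators (so any noetherian object has such a resolution), whereas you construct it explicitly by hand via the finite-support/noetherian-$\Bbbk$ argument; the content is the same.
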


\begin{proof}
  Under the given assumptions \cite[Lem.~5.6]{HJ2021} shows that the Grothendieck categories $\lMod{Q}$ and $\rMod{Q}$ are locally noetherian; in fact, $\{Q(p,-)\}_{p \in Q}$ and $\{Q(-,p)\}_{p \in Q}$ are generating families of noetherian projective objects in these categories. Thus, the noetherian objects $\stalkco{q} \in \lMod{Q}$ and $\stalkcn{q} \in \rMod{Q}$ from \ref{consequences-of-srp} have projective resolutions,
\begin{equation*}
  \text{$\cpx{P} \,=\, \cdots \to P_1 \to P_0 \to 0$ \ in \ $\lMod{Q}$ \quad \ and \ \quad $\cpx{P}' \,=\, \cdots \to P'_1 \to P'_0 \to 0$ \ in \ $\rMod{Q}$\;,}
\end{equation*}
where each $P_i$ and each $P'_i$ is a finite direct sum of representable objects, that is, objects of the form $Q(p,-)$ and $Q(-,p)$, respectively. By the Yoneda isomorphisms \mbox{\cite[eq. (3.10)]{Kelly}} the functors $\Hom{Q}{Q(p,-)}{?}$ and $\ORt{Q(-,p)}{{?}}$ are both naturally isomorphic to the evaluation functor $\Eq{p}(?) \colon \lMod{Q,\,\alg} \to \lMod{\alg}$ from \ref{recall-F-G}, which evidently preserves all limits and colimits. It follows that for every $i \geqslant 0$ the functors $\Hom{Q}{P_i}{?}$ and $\ORt{P'_i}{{?}}$ preserve all limits and colimits and hence so does the functors $\Hom{Q}{\cpx{P}}{?}$ and $\ORt{\cpx{P}'}{{?}}$, which goes from $\lMod{Q,\,\alg}$ to the category $\operatorname{Ch}\mspace{1mu}\alg$ of chain complexes of left $\alg$-modules. Since  homology $\operatorname{H}_* \colon \operatorname{Ch}\mspace{1mu}\alg \to \lMod{\alg}$ preserves products and filtered colimits, it follows that the functors
  \begin{equation*}
    \cH[i]{q} \,=\, \Ext{Q}{i}{\stalkco{q}}{?}
    \,=\, \operatorname{H}_{-i}\Hom{Q}{\cpx{P}}{?}
    \quad \text{ and } \quad
    \hH[i]{q} \,=\, \Tor{Q}{i}{\stalkcn{q}}{?}
    \,=\, \operatorname{H}_{i}(\ORt{\cpx{P}'}{{?}})
  \end{equation*}
preserve products and filtered colimits too. The last assertion in the proposition now follows immediately as one has $\Cq{q} = \hH[0]{q}$ and $\Kq{q} = \cH[0]{q}$ by definition.
\end{proof}

\begin{rmk}
  \label{rmk:FPinfty}
  Let $\mathcal{M}$ be a Grothendieck category. Following Gillespie \cite[Dfn.~3.1]{MR3594536}, an object $F \in \mathcal{M}$ is said to be of \emph{type $FP_\infty$} if the functors $\Ext{\mathcal{M}}{i}{F}{-}$  preserve filtered colimits for all $i \geqslant 0$.  Evidently, every such object is also compact in the sense of \dfnref{compact}. 
  
As mentioned in the proof above, the category $\lMod{Q}$ is locally noetherian and the object $\stalkco{q}$ is noetherian (under the assumptions in \prpref{H-preserves}). Thus \cite[Thm.~3.17]{MR3594536} shows that $\stalkco{q}$  is of type $FP_\infty$, and hence $\cH[i]{q} = \Ext{Q}{i}{\stalkco{q}}{?}$ preserves filtered colimits for every $i \geqslant 0$. So part of the statement in \prpref{H-preserves} follows directly from \cite{MR3594536}.
\end{rmk}

\begin{cor}
  \label{cor:Sq-compact}
  Adopt the assumptions from \prpref{H-preserves}. If $M \in \lMod{\alg}$ is finitely generated, then~$\Sq{q}(M)$ is a compact object in $\lMod{Q,\,\alg}$ for every $q \in Q$.
\end{cor}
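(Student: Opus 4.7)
The natural strategy is to exploit the adjunction $(\Sq{q},\Kq{q})$ from \ref{recall-C-K} together with the filtered-colimit preservation of $\Kq{q}$ that was just established in \prpref{H-preserves}. Concretely, for any family $\{X_i\}_{i\in I}$ in $\lMod{Q,\,\alg}$, the aim is to chain isomorphisms
\begin{equation*}
  \Hom{Q,\,\alg}{\Sq{q}(M)}{\textstyle\bigoplus_{i}X_i}
  \,\cong\,
  \Hom{\alg}{M}{\Kq{q}(\textstyle\bigoplus_{i}X_i)}
  \,\cong\,
  \Hom{\alg}{M}{\textstyle\bigoplus_{i}\Kq{q}(X_i)}
  \,\cong\,
  \textstyle\bigoplus_{i}\Hom{Q,\,\alg}{\Sq{q}(M)}{X_i},
\end{equation*}
where the outer isomorphisms come from adjunction and the middle one is the content we must prove.

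The first step of the plan is therefore to show that $\Kq{q}$ preserves arbitrary coproducts. Write the coproduct $\bigoplus_{i\in I}X_i$ as the filtered colimit, indexed over finite subsets $F\subseteq I$, of the finite subcoproducts $\bigoplus_{i\in F}X_i$. Since $\lMod{Q,\,\alg}$ is abelian, each such finite subcoproduct coincides with the finite product $\prod_{i\in F}X_i$, and $\Kq{q}$, being a right adjoint (see \ref{recall-C-K}), preserves all products. Combining this with the fact that $\Kq{q}$ preserves filtered colimits by \prpref{H-preserves}, we conclude that the canonical map $\bigoplus_{i\in I}\Kq{q}(X_i)\to\Kq{q}(\bigoplus_{i\in I}X_i)$ is an isomorphism.

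The second step is to handle $\Hom{\alg}{M}{-}$. For a finitely generated $\alg$-module $M$, the canonical map $\bigoplus_{i}\Hom{\alg}{M}{N_i}\to\Hom{\alg}{M}{\bigoplus_{i}N_i}$ is always injective (post-composing with the projection $\bigoplus N_i\to N_j$ recovers the $j$-th component), and it is surjective because any homomorphism $M\to\bigoplus_{i}N_i$ has image generated by finitely many elements, each of which has support in a common finite subset of $I$, so the whole map factors through some finite subsum. This is the only place finite generation of $M$ enters. I do not expect any serious obstacle here; the main point is that no finite presentation is required, because injectivity of the comparison map is automatic and only surjectivity uses finiteness of a generating set.

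Finally, assembling the two isomorphisms above with adjunction on either end shows that $\Hom{Q,\,\alg}{\Sq{q}(M)}{-}$ preserves coproducts, which is precisely compactness of $\Sq{q}(M)$ in the sense of \dfnref{compact}.
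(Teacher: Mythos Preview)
Your proposal is correct and follows the same strategy as the paper: use the adjunction $\Hom{Q,\,\alg}{\Sq{q}(M)}{-} \cong \Hom{\alg}{M}{\Kq{q}(-)}$ and argue that both $\Kq{q}$ and $\Hom{\alg}{M}{-}$ preserve coproducts. The paper's proof is terser---it simply asserts that $\Kq{q}$ preserves coproducts ``by \prpref{H-preserves}''---whereas you spell out the standard reduction (coproduct as filtered colimit of finite subcoproducts, $\Kq{q}$ additive and filtered-colimit-preserving), which is exactly the content hidden behind that citation.
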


\begin{proof}
  Fix $q \in Q$. By \ref{recall-C-K} there is a natural isomorphism,
  \begin{equation}
    \label{eq:composite-functor}
    \Hom{Q,\,\alg}{\Sq{q}(M)}{-} \,\cong\, \Hom{\alg}{M}{\Kq{q}(-)}\;.
  \end{equation}
The functor $\Kq{q}$ preserves coproducts by \prpref{H-preserves}, and $\Hom{\alg}{M}{-}$ preserves coproducts as $M$ is finitely generated. Thus, the functor in \eqref{composite-functor} preserves coproducts.
\end{proof}

We are now in a position to prove Theorem~D from the Introduction.

\begin{proof}[Proof of Theorem~D]
  We will apply \thmref{HoM} to the abelian category $\mathcal{M} = \lMod{Q,\,\alg}$ equipped with the projective model structure corresponding to the hereditary Hovey triple
\begin{equation*}
  (\mathcal{Q},\mathcal{W}, \mathcal{R}) \,=\, 
  ({}^\perp\lE{},\lE{},\mathcal{M})\;,
\end{equation*}
see \ref{summary-of-HJ2021}. It remains to verify condition \prtlbl{2} in \thmref{HoM}. We have $\mathbb{S} \subseteq \mathcal{M}^\mathrm{c}$ by \corref{Sq-compact}. Furthermore, for every $X \in \mathcal{M}=\lMod{Q,\,\alg}$ and $q \in Q$ there are isomorphisms:
\begin{equation*}
  \Ext{Q,\,\alg}{1}{\Sq{q}(\alg)}{X}
  \,\cong\,
  \Ext{Q,\,\alg}{1}{\stalkco{q} \otimes_\Bbbk \alg}{X}
  \,\cong\,
  \Ext{Q}{1}{\stalkco{q}}{X^\natural}
  \,\cong\,
  \cH{q}(X^\natural)
  \,\cong\,
  \cH{q}(X)^\natural\;.
\end{equation*}  
Indeed, the $1^\mathrm{st}$ isomorphism holds by definition, see   \ref{recall-C-K}. By \cite[Lem.~7.10]{HJ2021}, the stalk functor $\stalkco{q} \colon Q \to \lMod{\Bbbk}$ satisfies $\stalkco{q}(p) \in \lPrj{\Bbbk} \subseteq \lGPrj{\Bbbk}$ for every $p \in Q$. Hence $\stalkco{q}$ is a Gorenstein projective object in $\lMod{Q}$ by \cite[Thm.~2.7]{HJ2021}; in symbols: $\stalkco{q} \in \lGPrj{Q}$. Thus, the 
$2^\mathrm{nd}$ isomorphism above follows from \cite[Lem.~4.3(a)]{HJ2021}. The $3^\mathrm{rd}$ and $4^\mathrm{th}$ isomorphisms hold by \cite[Dfn.~7.11 and Rmk.~7.12]{HJ2021}. From these isomorphisms  the first equivalence below follows; the second equivalence holds by \cite[Thm.~7.1]{HJ2021}:
\begin{equation*}
  \Ext{Q,\,\alg}{1}{\Sq{q}(\alg)}{X}=0 \text{ for each $q \in Q$}
  \ \iff \
  \cH{q}(X)=0 \text{ for each $q \in Q$}  
  \ \iff \
  X \in \lE{}\,.
\end{equation*}
Consequently, one has $\mathbb{S}^\perp = \lE{} =\mathcal{W}$, as desired.
\end{proof}

\section{Perfect objects in $\QSD{Q}{\alg}$}

\label{sec:Perfect}

In this section, we introduce perfect objects and compare them to compact objects.

\begin{dfn}
  \label{dfn:support}
  The \emph{support} of an object $X$ in $\lMod{Q,\,\alg}$ is defined as
  \begin{equation*}
     \supp{X} \,=\,
     \{q \in Q \ | \ X(q) \neq 0 \}\;;
  \end{equation*}
  it is a subset of $\operatorname{Ob}(Q)$. We say that $X$ has \emph{finite support} if $\supp{X}$ is a finite set. 
\end{dfn} 

The next lemma follows immediately from the definition.

\begin{lem}
  \label{lem:ses-support}
  For every short exact sequence $0 \to X' \to X \to X'' \to 0$ in $\lMod{Q,\,\alg}$ one has
  \begin{equation*}
     \supp{X} \,=\, \supp{X'} \,\cup\, \supp{X''}\;.
  \end{equation*}
  Thus, the class of objects in $\lMod{Q,\,\alg}$ with finite support is a Serre subcategory. \qed
\end{lem}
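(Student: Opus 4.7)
The plan is to exploit that the category $\lMod{Q,\,\alg}$ of $\Bbbk$-linear functors $Q \to \lMod{\alg}$ is abelian in the pointwise sense: a sequence is exact in $\lMod{Q,\,\alg}$ if and only if, evaluated at every object $q \in Q$, it is exact in $\lMod{\alg}$. Equivalently, each evaluation functor $\Eq{q}$ from \ref{recall-F-G} is exact. I would therefore begin by fixing $q \in Q$ and evaluating the given short exact sequence at $q$ to obtain a short exact sequence $0 \to X'(q) \to X(q) \to X''(q) \to 0$ in $\lMod{\alg}$. From exactness I would then observe that $X(q) = 0$ holds if and only if both $X'(q) = 0$ and $X''(q) = 0$ hold; contrapositively, $X(q) \neq 0$ if and only if $X'(q) \neq 0$ or $X''(q) \neq 0$. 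Unraveling \dfnref{support} this is precisely the set-theoretic identity $\supp{X} = \supp{X'} \cup \supp{X''}$.

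For the second assertion, recall that a full subcategory of an abelian category is a Serre subcategory precisely when it is closed under subobjects, quotient objects, and extensions, and that these closure properties are already witnessed by short exact sequences. The established identity covers all three in one stroke: if $X$ has finite support then both $\supp{X'}$ and $\supp{X''}$ are subsets of the finite set $\supp{X}$, hence finite (closure under sub- and quotient objects); conversely, if $\supp{X'}$ and $\supp{X''}$ are finite, their union $\supp{X}$ is a finite set, so $X$ has finite support (closure under extensions). There is no real obstacle here: the only care needed is the verbal step of relating ``exact in the functor category'' to ``pointwise exact'' and then invoking the standard criterion for a Serre subcategory, so the proof will read as a single paragraph following the definition.
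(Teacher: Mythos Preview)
Your proposal is correct and is exactly the argument the paper has in mind: the lemma is stated with a \qed and the remark that it ``follows immediately from the definition,'' and your write-up simply unpacks this via pointwise exactness of the evaluation functors $\Eq{q}$ and the standard short-exact-sequence criterion for a Serre subcategory. There is nothing to add or change.
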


\begin{dfn}
  \label{dfn:perfect}
  We introduce the following notions.
\begin{itemlist}  
\item An object $K$ in $\lMod{Q,\,\alg}$ is called \emph{strictly perfect} if it has finite support and $K(q)$ is a finitely generated projective left $\alg$-module for every $q \in Q$. We set
  \begin{equation*}
    \finsupp{Q,\,\alg} \,=\,
    \big\{
      K \in \lMod{Q,\,\alg}\,\big|\,
      \text{$K$ is strictly perfect}
    \big\}.
  \end{equation*}
  
\item Assume \stpref{nilpotent}. An object $X$ in \smash{$\QSD{Q}{\alg}$} is called \emph{perfect} if it has a strictly perfect cofibrant replacement in the projective model structure on $\lMod{Q,\,\alg}$; that is:
\begin{equation*}
  \qquad
  \text{$X$ is perfect} \ \ \ \iff \ \ \
  \left\{\!\!
  \begin{array}{l}
  \text{There is an isomorphism $X \cong K$ in $\QSD{Q}{\alg}$}
  \\
  \text{for some object $K \in {}^\perp\lE{} \,\cap\, \finsupp{Q,\,\alg}$\;.}
  \end{array}
  \right.
\end{equation*}
We set
  \begin{equation*}
    \QSDperf{Q}{\alg} \,=\,
    \big\{
      X \in \QSD{Q}{\alg}\,\big|\,
      \text{$X$ is perfect}
    \big\}.
  \end{equation*}
\end{itemlist}  
\end{dfn} 

\begin{exa}
  As mentioned in the Introduction: In the special case of chain complexes, strictly perfect objects are bounded complexes of finitely generated modules.
\end{exa}

Beware that a strictly perfect object need not be perfect; see \thmref{counterexample} for a counterexample. On the positive side, one does have Theorem~B (part \prtlbl{c}) from the Introduction, which will be proved later in this section.

The next result follows immediately from \lemref{ses-support} and the definition, \dfnref[]{perfect}, of strictly perfect objects.

\begin{lem}
  \label{lem:extension}
Let $0 \to X' \to X \to X'' \to 0$ be an exact sequence in $\lMod{Q,\,\alg}$ with $X'' \in \finsupp{Q,\,\alg}$. Then one has $X' \in \finsupp{Q,\,\alg}$ if and only if $X \in \finsupp{Q,\,\alg}$. \qed
\end{lem}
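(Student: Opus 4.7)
The plan is to combine the support statement of Lemma \ref{lem:ses-support} with a pointwise analysis of the short exact sequence, exploiting that $X''(q)$ is projective so the sequence splits at every object $q$ of $Q$.

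First, the support condition is immediate. Since $X'' \in \finsupp{Q,\alg}$ has finite support and Lemma \ref{lem:ses-support} gives $\supp X = \supp X' \cup \supp X''$, the set $\supp X$ is finite if and only if $\supp X'$ is finite. So it only remains to match up the condition that the value at each object $q \in Q$ is a finitely generated projective $\alg$-module.

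For each $q \in Q$, evaluation at $q$ (the functor $\Eq{q}$ from \ref{recall-F-G}) is exact, so we obtain a short exact sequence
\begin{equation*}
0 \longrightarrow X'(q) \longrightarrow X(q) \longrightarrow X''(q) \longrightarrow 0
\end{equation*}
of left $\alg$-modules. By hypothesis $X''(q)$ is finitely generated and projective, so this sequence splits, yielding $X(q) \cong X'(q) \oplus X''(q)$. Now proceed in both directions: if $X'(q)$ is finitely generated projective, then $X(q)$, being a direct sum of two finitely generated projectives, is finitely generated projective; conversely, if $X(q)$ is finitely generated projective, then $X'(q)$, being a direct summand of a finitely generated projective module, is itself finitely generated projective.

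Combining the support part with the pointwise part yields the equivalence $X' \in \finsupp{Q,\alg} \iff X \in \finsupp{Q,\alg}$. There is essentially no obstacle here; the only subtlety worth flagging is the use of projectivity of $X''(q)$ to split the sequence and to ensure that the class of finitely generated projective modules is closed both under direct sums and under direct summands.
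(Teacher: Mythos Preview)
Your proof is correct and follows essentially the same approach as the paper, which simply states that the result follows immediately from \lemref{ses-support} and the definition of strictly perfect objects. You have spelled out the details the paper leaves implicit, namely the pointwise splitting due to projectivity of $X''(q)$.
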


\begin{con}
  \label{con:FF-GG}
   Consider the adjoint triple from \ref{recall-F-G}. We introduce the notation:
  \begin{itemlist}
  \item $\varepsilon_q$ for the counit of the adjunction $(\Fq{q},\Eq{q})$.
  \item $\eta_q$ for the unit of the adjunction $(\Eq{q},\Gq{q})$.
  \end{itemlist}
  Consider the endofunctors on the category $\lMod{Q,\,\alg}$ given by
  \begin{equation*}
    \textstyle
    \mathbb{F} \,=\, \bigoplus_{q \mspace{1mu}\in\mspace{1mu} Q}\, \Fq{q}\Eq{q}
    \qquad \text{and} \qquad
    \mathbb{G} \,=\, \prod_{q \mspace{1mu}\in\mspace{1mu} Q}\, \Gq{q}\Eq{q}\;.
  \end{equation*}
  By the universal properties of coproducts and products, there exist unique morphisms $\varepsilon$~and $\eta$ in $\lMod{Q,\,\alg}$ that make the diagrams 
  \begin{equation}
    \label{eq:two-triangles}
    \hspace*{-4.7ex}
    \begin{gathered}
    \xymatrix@C=3pc{
    \Fq{p}\Eq{p}
    \ar[d]_-{\iota_p} \ar[dr]^-{\varepsilon_p}
    & {}
    \\
    \!\!\!\!\!\!\!\!\!\!\mathbb{F}\,=\,\bigoplus_{q \mspace{1mu}\in\mspace{1mu} Q}\, \Fq{q}\Eq{q}
    \ar@{.>}[r]_-{\varepsilon}
    &
    \mathrm{Id}
    }
    \end{gathered}
    \qquad \text{and} \qquad
    \begin{gathered}
    \xymatrix@C=3pc{
    {} & 
    \Gq{p}\Eq{p}
    \ar[d]^-{\pi_p} 
    \\
    \mathrm{Id}
    \ar[ur]^-{\eta_p}
    \ar@{.>}[r]_-{\eta}
    &
    \prod_{q \mspace{1mu}\in\mspace{1mu} Q}\, \Gq{q}\Eq{q} \,=\, \mathbb{G}\!\!\!\!\!\!\!\!\!\!\!\!\!\!\!\!\!\!\!\!\!\!\!\!\!
    }
    \end{gathered}
  \end{equation}    
  commutative for every $p \in Q$; here $\iota_p$ is the injection, $\pi_p$ is the projection, and $\mathrm{Id}$ denotes the identity functor on $\lMod{Q,\,\alg}$. We set
\begin{equation*}
  \mathbb{K} \,=\, \Ker{\varepsilon}
  \qquad \text{and} \qquad
  \mathbb{C} \,=\, \Coker{\eta}\;,
\end{equation*}  
which are also endofunctors on $\lMod{Q,\,\alg}$.
\end{con}

\begin{prp}
  \label{prp:mathfrak-ses}
With the notation from \conref{FF-GG} one has short exact sequen\-ces of endofunctors on $\lMod{Q,\,\alg}$,
\begin{equation*}
  \mathfrak{F} \,=\,
  \xymatrix@C=1.1pc{
    0 \ar[r] & \mathbb{K} \ar[r] & \mathbb{F} \ar[r]^-{\varepsilon} & \mathrm{Id} \ar[r] & 0
  }
  \qquad \text{and} \qquad
  \mathfrak{G} \,=\,
  \xymatrix@C=1.1pc{
    0 \ar[r] & \mathrm{Id} \ar[r]^-{\eta} & \mathbb{G} \ar[r] & \mathbb{C} \ar[r] & 0
  }.
\end{equation*}  
For every object $X$ in $\lMod{Q,\,\alg}$, the short exact sequences $\mathfrak{F}(X)$ and $\mathfrak{G}(X)$ in $\lMod{Q,\,\alg}$ are objectwise split (see \dfnref{objectwise-split}).
\end{prp}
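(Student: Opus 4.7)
The strategy is to prove exactness and objectwise splitting simultaneously by evaluating at each object $p \in Q$. Since $\mathbb{K}(X) = \Ker{\varepsilon(X)}$ and $\mathbb{C}(X) = \Coker{\eta(X)}$ by definition, exactness of $\mathfrak{F}(X)$ amounts to $\varepsilon(X)$ being an epimorphism, and exactness of $\mathfrak{G}(X)$ amounts to $\eta(X)$ being a monomorphism. Both (co)kernels and (co)products in $\lMod{Q,\,\alg}$ are computed pointwise, so it suffices to show, for each $p \in Q$, that $\varepsilon(X)(p)$ is a split epimorphism and $\eta(X)(p)$ is a split monomorphism of $\alg$-modules. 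This will simultaneously yield exactness of $\mathfrak{F}(X)$ and $\mathfrak{G}(X)$ and the objectwise-split property.

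For $\mathfrak{F}$, commutativity of the left triangle in \eqref{two-triangles} evaluated at $X$ and $p$ reads $\varepsilon(X)(p) \circ \iota_p(X)(p) = \varepsilon_p(X)(p)$, so it is enough to split the single summand $\varepsilon_p(X)(p)$. By the standard correspondence under the adjunction $(\Fq{p},\Eq{p})$, the counit $\varepsilon_p(X)$ is the transpose of $\mathrm{id}_{X(p)}$, and hence, with $\Fq{p}$ as in~\ref{recall-F-G}, the map
\[
  \varepsilon_p(X)(p) \colon Q(p,p) \otimes_\Bbbk X(p) \longrightarrow X(p)
\]
sends $\alpha \otimes x \mapsto X(\alpha)(x)$. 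Therefore the $\alg$-linear map $\sigma \colon X(p) \to Q(p,p) \otimes_\Bbbk X(p)$ given by $x \mapsto \mathrm{id}_p \otimes x$ is a section of $\varepsilon_p(X)(p)$; postcomposing with $\iota_p(X)(p)$ yields a section of $\varepsilon(X)(p)$.

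For $\mathfrak{G}$ the argument is dual. Commutativity of the right triangle in \eqref{two-triangles} gives $\pi_p(X)(p) \circ \eta(X)(p) = \eta_p(X)(p)$, so it suffices to retract $\eta_p(X)(p)$. The unit of the adjunction $(\Eq{p},\Gq{p})$ transposes $\mathrm{id}_{X(p)}$, so by the formula for $\Gq{p}$ in~\ref{recall-F-G} it takes the form
\[
  \eta_p(X)(p) \colon X(p) \longrightarrow \Hom_\Bbbk\bigl(Q(p,p), X(p)\bigr),\qquad x \longmapsto \bigl(\alpha \mapsto X(\alpha)(x)\bigr).
\]
Evaluation at $\mathrm{id}_p$ defines an $\alg$-linear retraction $\rho \colon \Hom_\Bbbk(Q(p,p), X(p)) \to X(p)$, and $\rho \circ \pi_p(X)(p)$ is then a retraction of $\eta(X)(p)$.

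There is no real obstacle: the only nontrivial step is identifying the pointwise formulas for $\varepsilon_p(X)(p)$ and $\eta_p(X)(p)$ as coming from the triangular identities of the adjoint triple recalled in~\ref{recall-F-G}. Once these formulas are in hand, the splittings are produced canonically from the existence of $\mathrm{id}_p \in Q(p,p)$; in particular, no hypothesis beyond the basic setup of~\ref{blanket-assumptions} is needed for this proposition.
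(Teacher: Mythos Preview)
Your proof is correct and follows essentially the same approach as the paper: reduce via the commutative triangles in \eqref{two-triangles} to showing that $\varepsilon_p(X)(p)$ is split epic (respectively, $\eta_p(X)(p)$ is split monic), and produce the splitting from the identity morphism $\mathrm{id}_p$. The only cosmetic difference is that the paper phrases the section abstractly as the unit $\zeta_p^{\,\Eq{p}(X)}$ of the adjunction $(\Fq{p},\Eq{p})$ and invokes the triangle identities, whereas you write out the explicit formula $x \mapsto \mathrm{id}_p \otimes x$; these are the same map.
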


\begin{proof}
  We only prove the assertions about the sequence $\mathfrak{F}$ as the sequence $\mathfrak{G}$ can be hand\-led similarly. For every $X$ in $\lMod{Q,\,\alg}$, \conref{FF-GG} yields an exact sequence
\begin{equation*}
  \xymatrix@C=1.5pc{
    0 \ar[r] & \mathbb{K}(X) \ar[r] & \mathbb{F}(X) \ar[r]^-{\varepsilon^X} & X
  }.
\end{equation*}
Here we have used the notation $\varepsilon^X$ instead of $\varepsilon(X)$. We must prove that $\varepsilon^X$ is epic, i.e.~that the morphism $\varepsilon^X(p)$ in $\lMod{\alg}$ is epic for every $p \in Q$. We will even show that $\varepsilon^X(p)$ is  split epic, which also takes care of the last assertion in the lemma. By evaluating the leftmost diagram in \eqref{two-triangles} at $X$ and applying the functor $\Eq{p}$ to the resulting diagram, one gets
\begin{equation}
  \label{eq:EEE}
  \Eq{p}(\varepsilon^X) \circ \Eq{p}(\iota_p^X) \,=\, \Eq{p}(\varepsilon_p^X)\;.
\end{equation}
Let $\zeta_p$ be the unit of the adjunction $(\Fq{p},\Eq{p})$. By the triangle (or the zigzag) identities for adjoint functors, see \cite[Chap.~IV.1, Thm.~1(ii)]{mac}, the composite
\begin{equation*}
  \xymatrix@C=2.5pc{
    \Eq{p} \ar[r]^-{\zeta_p \Eq{p}} & 
    \Eq{p}\Fq{p}\Eq{p} \ar[r]^-{\Eq{p} \varepsilon_p} & \Eq{p}
  }
\end{equation*}
is the identity transformation on $\Eq{p}$, so the morphism $\zeta_p^{\,\Eq{p}(X)}$ is a right inverse of 
\smash{$\Eq{p}(\varepsilon_p^X)$}. Combined with the identity \eqref{EEE}, it follows that the composite \smash{$\Eq{p}(\iota_p^X) \circ \zeta_p^{\,\Eq{p}(X)}$} is a right inverse of the morphism \smash{$\Eq{p}(\varepsilon^X) = \varepsilon^X(p)$}, as desired.
\end{proof}

\begin{lem}
  \label{lem:ses-K}
 Assume that $Q$ is Hom-finite and locally bounded, i.e.~$Q$ satisfies  conditions \eqref{Homfin} and \eqref{locbd} in \ref{Bbbk}. For every object $X \in \lMod{Q,\,\alg}$ the following assertions hold.
\begin{prt}
\item If $X$ has finite support, then $\mathbb{F}(X)$, $\mathbb{G}(X)$, $\mathbb{K}(X)$, and $\mathbb{C}(X)$ have finite support.
\item If $X$ belongs to $\finsupp{Q,\,\alg}$, then $\mathbb{F}(X)$, $\mathbb{G}(X)$, $\mathbb{K}(X)$, and $\mathbb{C}(X)$ belong to $\finsupp{Q,\,\alg}$.

\item If $X$ belongs to $\finsupp{Q,\,\alg}$, then $\mathbb{F}(X)$ is a finitely presented projective object in $\lMod{Q,\,\alg}$.
\end{prt}  
\end{lem}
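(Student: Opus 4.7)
The plan is to dispatch the four functors $\mathbb{F}, \mathbb{G}, \mathbb{K}, \mathbb{C}$ together in three passes: first the support claim \prtlbl{a}, then pointwise finite generation and projectivity for \prtlbl{b}, and finally global finite presentation and projectivity for $\mathbb{F}(X)$ in \prtlbl{c}.

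For \prtlbl{a} the key observation is that $\Eq{q}(X) = X(q)$ vanishes for $q \notin \supp X$, so the sum defining $\mathbb{F}(X)$ and the product defining $\mathbb{G}(X)$ both collapse to be indexed by the finite set $\supp X$. Reading off the formulas in \ref{recall-F-G}, one immediately gets $\supp{\Fq{q}(X(q))} \subseteq \operatorname{N}_+(q)$ and $\supp{\Gq{q}(X(q))} \subseteq \operatorname{N}_-(q)$, which are finite by local boundedness \eqref{locbd}; hence $\mathbb{F}(X)$ and $\mathbb{G}(X)$ have finite support. The corresponding statements for $\mathbb{K}(X)$ and $\mathbb{C}(X)$ then follow by applying \lemref{ses-support} to the short exact sequences $\mathfrak{F}(X)$ and $\mathfrak{G}(X)$ from \prpref{mathfrak-ses}.

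For \prtlbl{b} assume $X \in \finsupp{Q,\,\alg}$; part \prtlbl{a} already handles the support condition, so only pointwise finite generation and projectivity over $\alg$ remain to verify. At each $p \in Q$ the value
\[
\mathbb{F}(X)(p) \,=\, \bigoplus_{q \in \supp X} Q(q,p) \otimes_\Bbbk X(q)
\]
is a finite direct sum of finitely generated projective $\alg$-modules, since $Q(q,p)$ is f.g.\ projective over $\Bbbk$ by Hom-finiteness \eqref{Homfin} and $X(q)$ is f.g.\ projective over $\alg$. For $\mathbb{G}(X)(p)$ the product collapses to a finite direct sum (only $q \in \supp X \cap \operatorname{N}_+(p)$ contributes), and each surviving factor $\Hom{\Bbbk}{Q(p,q)}{X(q)}$ is isomorphic to $\Hom{\Bbbk}{Q(p,q)}{\Bbbk} \otimes_\Bbbk X(q)$ by tensor evaluation (as in the proof of \lemref{F-G-iso}), hence again f.g.\ projective over $\alg$. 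For $\mathbb{K}(X)$ and $\mathbb{C}(X)$ the decisive input is the objectwise splitting supplied by \prpref{mathfrak-ses}, which realises $\mathbb{K}(X)(p)$ and $\mathbb{C}(X)(p)$ as direct summands of $\mathbb{F}(X)(p)$ and $\mathbb{G}(X)(p)$ respectively, forcing them to be f.g.\ projective.

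For \prtlbl{c} the plan is to show that $\Fq{q}(M)$ is a finitely presented projective object of $\lMod{Q,\,\alg}$ whenever $M$ is a finitely generated projective $\alg$-module; the claim for $\mathbb{F}(X) = \bigoplus_{q \in \supp X}\Fq{q}(X(q))$ then follows because finite direct sums preserve both properties. The adjunction $(\Fq{q},\Eq{q})$ furnishes a natural isomorphism $\Hom{Q,\,\alg}{\Fq{q}(\alg)}{-} \cong \Eq{q}(-)$, and since colimits in $\lMod{Q,\,\alg}$ are computed pointwise, $\Eq{q}$ is exact and preserves all colimits; whence $\Fq{q}(\alg)$ is both projective and finitely presented. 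Writing $M$ as a direct summand of $\alg^n$ transfers both properties to $\Fq{q}(M)$. The only real subtlety is that finite presentation must be certified globally in the functor category rather than pointwise, and this is exactly what the adjunction delivers.
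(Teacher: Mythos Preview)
Your proof is correct and follows essentially the same approach as the paper: both arguments reduce \prtlbl{a} to $\mathbb{F}(X)$ and $\mathbb{G}(X)$ via \lemref{ses-support}, bound their supports by $\bigcup_{q \in \supp X}\operatorname{N}_{\pm}(q)$, handle \prtlbl{b} via Hom-finiteness plus the objectwise splitting of \prpref{mathfrak-ses}, and deduce \prtlbl{c} from the finite decomposition $\mathbb{F}(X) = \bigoplus_{q \in \supp X}\Fq{q}(X(q))$. The only cosmetic difference is that in \prtlbl{c} the paper cites \cite[Lem.~3.11]{HJ2021} for the fact that each $\Fq{q}(X(q))$ is finitely presented projective, whereas you supply the adjunction argument directly.
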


\begin{proof}
  Recall from \prpref{mathfrak-ses} that there are short exact sequences,
\begin{equation*}
  \mathfrak{F}(X) \,=\,\,
    0 \to \mathbb{K}(X) \to \mathbb{F}(X) \to X \to 0
   \quad \text{ and } \quad
   \mathfrak{G}(X) \,=\,\,
    0 \to X \to \mathbb{G}(X) \to \mathbb{C}(X) \to 0\,.
\end{equation*}  
  \proofoftag{a} Assume that $X$ has finite support. By the short exact sequences above and by \lemref{ses-support} it suffices to argue that the objects $\mathbb{F}(X)$ and $\mathbb{G}(X)$ have finite support. By \conref{FF-GG} and by the definition of support, see \dfnref[]{support}, there are for each $p \in Q$ equalities:
\begin{equation} 
  \label{eq:F-G-1} 
  \begin{aligned}
    \textstyle
    \mathbb{F}(X)(p) 
    &\,=\, 
    \textstyle
    \bigoplus_{q \mspace{1mu}\in\mspace{1mu} Q}\, \Fq{q}(X(q))(p)
    \,=\, 
    \bigoplus_{q \mspace{1mu}\in\mspace{1mu} \supp{X}}\, \Fq{q}(X(q))(p)\;,
    \\
    \textstyle
    \mathbb{G}(X)(p) 
    &\,=\, 
    \textstyle
    \prod_{q \mspace{1mu}\in\mspace{1mu} Q}\, \Gq{q}(X(q))(p)
    \,=\, 
    \prod_{q \mspace{1mu}\in\mspace{1mu} \supp{X}}\, \Gq{q}(X(q))(p)\;.
  \end{aligned}  
\end{equation}  
  Now fix $q \in \supp{X}$ and recall from \ref{recall-F-G} that one has
\begin{equation}
  \label{eq:F-G-2} 
  \begin{aligned}
  \Fq{q}(X(q))(p) &\,=\, Q(q,p) \otimes_\Bbbk X(q)\;,
  \\
  \Gq{q}(X(q))(p) &\,=\, \Hom{\Bbbk}{Q(p,q)}{X(q)}\;.
  \end{aligned}
\end{equation}
We see that if $\Fq{q}(X(q))(p) \neq 0$ then $p$ must be in the set $\operatorname{N}_+(q)$, and if $\Gq{q}(X(q))(p) \neq 0$ then $p$ must be in the set $\operatorname{N}_-(q)$; the sets $\operatorname{N}_\pm(q)$ are defined in condition \eqref{locbd} in \ref{Bbbk}. Hence there are inclusions:
\begin{equation*}
  \textstyle
  \supp{\mathbb{F}(X)} \,\subseteq \ \bigcup_{q \mspace{1mu}\in\mspace{1mu} \supp{X}} \,\operatorname{N}_+(q)
    \quad \text{ and } \quad
  \supp{\mathbb{G}(X)} \,\subseteq \ \bigcup_{q \mspace{1mu}\in\mspace{1mu} \supp{X}} \,\operatorname{N}_-(q)\;.
\end{equation*}
By assumption, the set $\supp{X}$ is finite, and so are the sets $\operatorname{N}_{\pm}(q)$ for each $q$ as $Q$ is locally bounded. Thus, the inclusions above show that $\mathbb{F}(X)$ and $\mathbb{G}(X)$ have finite support.

\proofoftag{b} Assume that $X$ belongs to $\finsupp{Q,\,\alg}$. From part \prtlbl{a} we know that $\mathbb{F}(X)$, $\mathbb{G}(X)$, $\mathbb{K}(X)$, and $\mathbb{C}(X)$ have finite support, so it remains to show that for each $p \in Q$ the left $\alg$-modules $\mathbb{F}(X)(p)$, $\mathbb{G}(X)(p)$, $\mathbb{K}(X)(p)$, and $\mathbb{C}(X)(p)$ are finitely generated projective. \prpref{mathfrak-ses} shows that for each $p \in Q$ the sequences $\mathfrak{F}(X)(p)$ and $\mathfrak{G}(X)(p)$ are split exact in $\lMod{\alg}$, so it suffices to see that $\mathbb{F}(X)(p)$ and $\mathbb{G}(X)(p)$ are finitely generated projective. Fix $p \in Q$. As $Q$ is Hom-finite, the $\Bbbk$-modules $Q(q,p)$ and $Q(p,q)$ are finitely generated projective for every $q \in Q$. As $X(q)$ is a finitely generated projective left $\alg$-module for every $q \in Q$, it follows from \eqref{F-G-2} that $\Fq{q}(X(q))(p)$ and $\Gq{q}(X(q))(p)$ are finitely generated projective left $\alg$-modules for every $q \in Q$. Since $\supp{X}$ is a finite set, \eqref{F-G-1} now shows that $\mathbb{F}(X)(p)$ and $\mathbb{G}(X)(p)$ are finite direct sums of finitely generated projective left $\alg$-modules; hence $\mathbb{F}(X)(p)$ and $\mathbb{G}(X)(p)$ are finitely generated projective left $\alg$-modules.

\proofoftag{c} From \eqref{F-G-1} we know that 
$\mathbb{F}(X)$ is the finite direct sum $\bigoplus_{q \mspace{1mu}\in\mspace{1mu} \supp{X}}\, \Fq{q}(X(q))$; and by \cite[Lem.~3.11]{HJ2021} each object $\Fq{q}(X(q))$ is finitely presented and projective in $\lMod{Q,\,\alg}$.
\end{proof}

\begin{prp}
  \label{prp:strictly-perfect-is-compact}
 Assume that $Q$ is Hom-finite and locally bounded, i.e.~$Q$ satisfies~conditions \eqref{Homfin} and \eqref{locbd} in \ref{Bbbk}.
  Every strictly perfect object in $\lMod{Q,\,\alg}$ is of type $FP_\infty$ (see \rmkref{FPinfty}); in particular, there is an inclusion:
\begin{equation*}
  \finsupp{Q,\,\alg} \,\subseteq\, (\lMod{Q,\,\alg})^\mathrm{c}\;.
\end{equation*}  
\end{prp}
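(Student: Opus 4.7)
The plan is to iteratively apply Construction~\ref{con:FF-GG} to build an infinite projective resolution of $X$ consisting of finitely presented projective objects, and then invoke the standard fact that such a resolution forces the $\operatorname{Ext}^i$ functors to commute with filtered colimits.

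Concretely, given a strictly perfect $X$, I would set $X_0 = X$ and recursively $X_{n+1} = \mathbb{K}(X_n)$. Proposition~\ref{prp:mathfrak-ses} provides short exact sequences
\begin{equation*}
  0 \to X_{n+1} \to \mathbb{F}(X_n) \to X_n \to 0.
\end{equation*}
By induction on $n$ using \lemref{extension} together with \lemref{ses-K}\prtlbl{b}, every $X_n$ remains in $\finsupp{Q,\,\alg}$; then \lemref{ses-K}\prtlbl{c} guarantees that each $\mathbb{F}(X_n)$ is a finitely presented projective object of $\lMod{Q,\,\alg}$. Splicing the above sequences yields an exact resolution
\begin{equation*}
  \cdots \to \mathbb{F}(X_2) \to \mathbb{F}(X_1) \to \mathbb{F}(X_0) \to X \to 0
\end{equation*}
of $X$ by finitely presented projectives in $\lMod{Q,\,\alg}$.

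Next I would deduce $FP_\infty$ from this resolution. The category $\lMod{Q,\,\alg}$ is a Grothendieck category (it is a category of additive functors from a small preadditive category into the Grothendieck category $\lMod{\alg}$), so filtered colimits are exact. Write $P_\bullet \to X$ for the resolution just constructed. For any finitely presented projective object $P$, the functor $\Hom{Q,\,\alg}{P}{-}$ preserves filtered colimits, so for a filtered system $\{Y_\lambda\}$ with colimit $Y$ one has, by exactness of filtered colimits applied to the complex $\Hom{Q,\,\alg}{P_\bullet}{-}$,
\begin{equation*}
  \Ext{Q,\,\alg}{i}{X}{\varinjlim Y_\lambda}
  \,\cong\, H^i\mspace{-2mu}\varinjlim \Hom{Q,\,\alg}{P_\bullet}{Y_\lambda}
  \,\cong\, \varinjlim\Ext{Q,\,\alg}{i}{X}{Y_\lambda}
\end{equation*}
for every $i \geqslant 0$. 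Thus $X$ is of type $FP_\infty$ in the sense of \rmkref{FPinfty}.

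Finally, compactness follows for free: any set-indexed coproduct $\bigoplus_{i \in I} Y_i$ is the filtered colimit of finite subcoproducts $\bigoplus_{i \in F} Y_i$ over the finite subsets $F \subseteq I$, so preservation of filtered colimits by $\Hom{Q,\,\alg}{X}{-}$ (the case $i=0$ of $FP_\infty$) specializes to preservation of coproducts, giving the asserted inclusion $\finsupp{Q,\,\alg} \subseteq (\lMod{Q,\,\alg})^\mathrm{c}$. The only nontrivial step is setting up the iterative construction correctly; once \lemref{ses-K} is in hand, the $FP_\infty$ conclusion is a standard consequence of the existence of a resolution by finitely presented projectives in a Grothendieck category.
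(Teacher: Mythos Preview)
Your argument is correct and is essentially the paper's own proof: you build the same iterated resolution $\cdots \to \mathbb{F}(\mathbb{K}^2(X)) \to \mathbb{F}(\mathbb{K}(X)) \to \mathbb{F}(X) \to X \to 0$ via \prpref{mathfrak-ses} and \lemref{ses-K}, and the paper likewise concludes $FP_\infty$ from the existence of this resolution by finitely presented projectives. Your only addition is spelling out the standard filtered-colimit argument at the end (and the citation of \lemref{extension} is unnecessary, since \lemref{ses-K}\prtlbl{b} already gives $\mathbb{K}(X_n)\in\finsupp{Q,\,\alg}$ directly).
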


\begin{proof}
  Consider the endofunctors $\mathbb{F}$ and $\mathbb{K}$ on \smash{$\lMod{Q,\,\alg}$} from \conref{FF-GG}. For $i \geqslant 0$ we denote by $\mathbb{K}^i$ the $i$-fold composite $\mathbb{K} \circ \cdots \circ \mathbb{K}$ with the convention that $\mathbb{K}^0 = \mathrm{Id}$ is the identity functor. Let $X \in \lMod{Q,\,\alg}$.   Pasting together the short exact sequences,
\begin{equation*}
  \mathfrak{F}(\mathbb{K}^i(X)) \,=\, \
    0 \longrightarrow \mathbb{K}^{i+1}(X) \longrightarrow \mathbb{F}(\mathbb{K}^i(X)) \longrightarrow \mathbb{K}^i(X) \longrightarrow 0
   \qquad (i \geqslant 0)
\end{equation*}  
coming from \prpref{mathfrak-ses}, we get a long exact sequence,
\begin{equation}
  \label{eq:fg-proj-res}
  \cdots \longrightarrow \mathbb{F}(\mathbb{K}^3(X)) \longrightarrow \mathbb{F}(\mathbb{K}^2(X)) \longrightarrow \mathbb{F}(\mathbb{K}(X)) \longrightarrow X \longrightarrow 0\;.
\end{equation}  
If $X \in \finsupp{Q,\,\alg}$, then successive applications of \lemref{ses-K}(b) show that $\mathbb{K}^i(X) \in \finsupp{Q,\,\alg}$ for every $i \geqslant 0$, and hence each $\mathbb{F}(\mathbb{K}^i(X))$ is finitely presented and projective by \lemref{ses-K}\prtlbl{c}. Thus \eqref{fg-proj-res} is a projective resolution of $X$ by finitely presented (and projective) objects, and it follows that $X$ is of type $FP_\infty$.
\end{proof}

\begin{cor}
  \label{cor:perfect-is-compact-in-D}
  Assume \stpref{nilpotent}. Every perfect object in $\QSD{Q}{\alg}$ is compact, that is:
\begin{equation*}
  \QSDperf{Q}{\alg} \,\subseteq\, \QSD{Q}{\alg}^\mathrm{c}\;.
\end{equation*}  
\end{cor}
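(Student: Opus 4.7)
The plan is to simply assemble the pieces already established. Fix $X \in \QSDperf{Q}{\alg}$. By \dfnref{perfect}, there exists $K \in {}^\perp\lE{} \cap \finsupp{Q,\,\alg}$ with $X \cong K$ in $\QSD{Q}{\alg}$. Compactness is invariant under isomorphism in any additive category, so it suffices to show that $K$ is compact in $\QSD{Q}{\alg}$.

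First, I would apply \prpref{strictly-perfect-is-compact}: since \stpref{nilpotent} forces $Q$ to be Hom-finite and locally bounded, the assumption $K \in \finsupp{Q,\,\alg}$ yields that $K$ is compact in the ambient abelian category $\lMod{Q,\,\alg}$. Next, I would view $K$ inside the projective model structure on $\lMod{Q,\,\alg}$ coming from the Hovey triple $({}^\perp\lE{},\lE{},\lMod{Q,\,\alg})$ recalled in \ref{summary-of-HJ2021}. In this model structure every object is fibrant, so the class $\mathcal{R} = \lMod{Q,\,\alg}$ is trivially closed under coproducts, and $K$ is bifibrant because $K \in {}^\perp\lE{} = \mathcal{Q}$ and $K \in \mathcal{R}$ automatically.

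With these hypotheses in place, \lemref{compact} (applied to $\mathcal{M} = \lMod{Q,\,\alg}$ with the projective hereditary abelian model structure) transfers compactness of the bifibrant compact object $K$ from $\mathcal{M}$ to the homotopy category, giving that $K$ is compact in $\operatorname{Ho}(\lMod{Q,\,\alg}) = \QSD{Q}{\alg}$. Therefore $X$ is compact, establishing the inclusion $\QSDperf{Q}{\alg} \subseteq \QSD{Q}{\alg}^\mathrm{c}$.

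There is essentially no obstacle here: the substantive work sits in \prpref{strictly-perfect-is-compact} (which required the construction of the explicit resolution \eqref{fg-proj-res} via the endofunctors $\mathbb{F}, \mathbb{K}$) and in \lemref{compact} (which transferred compactness through the stable category equivalence). The only points requiring minimal care are verifying that both hypotheses of \lemref{compact} hold, namely bifibrancy of $K$ and closure of $\mathcal{R}$ under coproducts, both of which are immediate because the chosen model structure is projective.
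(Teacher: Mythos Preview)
Your proof is correct and follows essentially the same approach as the paper: reduce to $K \in {}^\perp\lE{} \cap \finsupp{Q,\,\alg}$ via isomorphism-invariance of compactness, invoke \prpref{strictly-perfect-is-compact} to get compactness of $K$ in $\lMod{Q,\,\alg}$, note that $K$ is bifibrant in the projective model structure, and then apply \lemref{compact}. Your explicit verification that $\mathcal{R} = \lMod{Q,\,\alg}$ is closed under coproducts is a small but welcome detail.
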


\begin{proof}
The class of compact objects in $\QSD{Q}{\alg}$ is closed under isomorphisms, so to prove the inclusion \smash{$\QSDperf{Q}{\alg} \subseteq \QSD{Q}{\alg}^\mathrm{c}$} it suffices by \dfnref{perfect} to argue that every object \smash{$K \in {}^\perp\lE{} \,\cap\, \finsupp{Q,\,\alg}$} is compact in \smash{$\QSD{Q}{\alg}$}. Recall from \ref{summary-of-HJ2021} that $\QSD{Q}{\alg}$ is the homotopy category of $\lMod{Q,\,\alg}$ equipped with the hereditary abelian model structure where $\mathcal{Q}={}^\perp\lE{}$ is the class of cofibrant objects and every object is fibrant. Thus every object \smash{$K \in {}^\perp\lE{} \,\cap\, \finsupp{Q,\,\alg}$} is both cofibrant and fibrant, and by \prpref{strictly-perfect-is-compact} it is also compact in $\lMod{Q,\,\alg}$. Thus, \lemref{compact} implies that $K$ is also compact in \smash{$\operatorname{Ho}(\lMod{Q,\,\alg}) = \QSD{Q}{\alg}$}.
\end{proof}

Our next goal is to show that $\QSDperf{Q}{\alg}$ is even a triangulated subcategory of $\QSD{Q}{\alg}^\mathrm{c}$.

\begin{prp}
  \label{prp:conflation}
  Assume \stpref{nilpotent}. Consider for $X \in \lMod{Q,\,\alg}$   the exact sequences,
\begin{equation*}
  \mathfrak{F}(X) \,=\,\,
    0 \to \mathbb{K}(X) \to \mathbb{F}(X) \to X \to 0
   \quad \text{ and } \quad
   \mathfrak{G}(X) \,=\,\,
    0 \to X \to \mathbb{G}(X) \to \mathbb{C}(X) \to 0
\end{equation*}   
in $\lMod{Q,\,\alg}$ from \prpref{mathfrak-ses}. The following assertions hold:
\begin{prt}
\item If $X \in {}^\perp\lE{}$, then one has $\mathbb{F}(X), \mathbb{G}(X) \in \lPrj{Q,\,\alg}$ and $\mathbb{K}(X), \mathbb{C}(X) \in {}^\perp\lE{}$, and hence:
\begin{itemlist}
\item The sequences $\mathfrak{F}(X)$ and $\mathfrak{G}(X)$ are conflations in the exact category ${}^\perp\lE{}$.
\item In the triangulated stable category ${}^\perp\lE{}\big/\lPrj{Q,\,\alg}$, whose translation functor we denote by $\upSigma$, there are isomorphisms $\upSigma X \cong \mathbb{C}(X)$ and $\upSigma^{-1}X \cong \mathbb{K}(X)$.
\end{itemlist}

\item If $X \in \lE{}^\perp$, then one has $\mathbb{F}(X), \mathbb{G}(X) \in \lInj{Q,\,\alg}$ and $\mathbb{K}(X), \mathbb{C}(X) \in \lE{}^\perp$, and hence:
\begin{itemlist}
\item The sequences $\mathfrak{F}(X)$ and $\mathfrak{G}(X)$ are conflations in the exact category $\lE{}^\perp$.
\item In the triangulated stable category $\lE{}^\perp\big/\lInj{Q,\,\alg}$, whose translation functor we denote by $\upSigma$, there are isomorphisms $\upSigma X \cong \mathbb{C}(X)$ and $\upSigma^{-1}X \cong \mathbb{K}(X)$.
\end{itemlist}
\end{prt}
\end{prp}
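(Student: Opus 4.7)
I would focus on part (a), since part (b) is entirely dual under the replacements $\Fq{q} \leftrightarrow \Gq{q}$, $\lPrj{} \leftrightarrow \lInj{}$, ${}^\perp\lE{} \leftrightarrow \lE{}^\perp$. The entire statement will hinge on one key claim: for any $X$ with $X(q) \in \lPrj{\alg}$ for every $q \in Q$, both $\mathbb{F}(X)$ and $\mathbb{G}(X)$ lie in $\lPrj{Q,\,\alg}$. Granting this claim, the rest of the proof is formal: by Theorem~E (proved in the previous section) any $X \in {}^\perp\lE{}$ satisfies $X(q) \in \lPrj{\alg}$ for every $q$, so $\mathbb{F}(X),\mathbb{G}(X) \in \lPrj{Q,\,\alg} \subseteq {}^\perp\lE{}$. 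Then \thmref{closure-properties}\prtlbl{a} applied to $\mathfrak{F}(X)$ (since $X,\mathbb{F}(X) \in {}^\perp\lE{}$) gives $\mathbb{K}(X) \in {}^\perp\lE{}$; and \thmref{closure-properties}\prtlbl{b} applied to $\mathfrak{G}(X)$ (since $X,\mathbb{G}(X) \in {}^\perp\lE{}$ and $\mathfrak{G}(X)$ is objectwise split by \prpref{mathfrak-ses}) gives $\mathbb{C}(X) \in {}^\perp\lE{}$. Consequently $\mathfrak{F}(X)$ and $\mathfrak{G}(X)$ are short exact sequences with all three terms in ${}^\perp\lE{}$, which is precisely the definition of a conflation in the exact structure on ${}^\perp\lE{}$ inherited from $\lMod{Q,\,\alg}$.

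For the key claim, the projectivity of $\mathbb{F}(X) = \bigoplus_{q \in Q}\Fq{q}(X(q))$ is immediate: the functor $\Fq{q}$ is left adjoint to the exact evaluation $\Eq{q}$ by \ref{recall-F-G}, hence preserves projectives, and by \cite[Prop.~3.12]{HJ2021} the objects $\Fq{q}(\alg)$ are projective generators of $\lMod{Q,\,\alg}$, so $\Fq{q}(X(q))$ is projective for every $q \in Q$ when $X(q) \in \lPrj{\alg}$. The projectivity of $\mathbb{G}(X) = \prod_{q \in Q}\Gq{q}(X(q))$ is the delicate part, and is where \lemref{F-G-iso} and \prpref{F-G-locally-finite} become essential. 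Combining these two results with a reindexing via the Serre autoequivalence $\Serre$ yields
\begin{equation*}
  \mathbb{G}(X) \,=\, \textstyle\prod_{q \in Q}\Gq{q}(X(q))
  \,\cong\, \prod_{p \in Q}\Fq{p}(X(\Serre(p)))
  \,\cong\, \bigoplus_{p \in Q}\Fq{p}(X(\Serre(p)))\;,
\end{equation*}
and the right-hand side is a coproduct of projectives because $X(\Serre(p)) \in \lPrj{\alg}$ for every $p$; hence $\mathbb{G}(X) \in \lPrj{Q,\,\alg}$.

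It remains to identify the translation functor. By \ref{summary-of-HJ2021}, the exact category ${}^\perp\lE{}$ is Frobenius with projective-injective objects exactly $\lPrj{Q,\,\alg} = {}^\perp\lE{} \cap \lE{}$, so by the canonical construction of the stable category of a Frobenius category (see \cite[Chap.~I\S2]{Happel}), the translation $\upSigma X$ is defined by choosing any conflation $X \hookrightarrow P \twoheadrightarrow \upSigma X$ in ${}^\perp\lE{}$ with $P$ projective-injective; the conflation $\mathfrak{G}(X)$ has this form with $P = \mathbb{G}(X)$, whence $\upSigma X \cong \mathbb{C}(X)$ in the stable category, and dually the conflation $\mathfrak{F}(X)$ yields $\upSigma^{-1}X \cong \mathbb{K}(X)$. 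Part (b) follows by the obvious dualization, with the symmetric claim that $\mathbb{F}(X),\mathbb{G}(X) \in \lInj{Q,\,\alg}$ when $X(q) \in \lInj{\alg}$ for every $q$: here $\Gq{q}$ preserves injectives as right adjoint to the exact $\Eq{q}$, handling $\mathbb{G}(X)$ directly, while $\mathbb{F}(X)$ is rewritten as a product of $\Gq{q}$'s via \lemref{F-G-iso} and \prpref{F-G-locally-finite}.

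The main obstacle is therefore the projectivity of $\mathbb{G}(X)$ (and dually the injectivity of $\mathbb{F}(X)$); this is non-obvious because $\Gq{q}$ is a \emph{right} adjoint and would not preserve projectives in the absence of a Serre functor. The Serre-functor identification $\Gq{\Serre(q)} \cong \Fq{q}$, together with local boundedness of $Q$ to convert the product into a locally finite coproduct, is the crucial input making the argument work.
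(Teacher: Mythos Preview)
Your proposal is correct and follows essentially the same route as the paper: you use \prpref{E-perp-inclusions}\prtlbl{a} (the inclusion in Theorem~E) to get $X(q) \in \lPrj{\alg}$, conclude $\mathbb{F}(X) \in \lPrj{Q,\,\alg}$ directly, invoke \lemref{F-G-iso} and \prpref{F-G-locally-finite} to rewrite $\mathbb{G}(X)$ as a coproduct of objects $\Fq{p}(-)$ and hence obtain $\mathbb{G}(X) \in \lPrj{Q,\,\alg}$, and then apply \thmref{closure-properties}\prtlbl{a},\prtlbl{b} together with the objectwise splitting from \prpref{mathfrak-ses} to place $\mathbb{K}(X)$ and $\mathbb{C}(X)$ in ${}^\perp\lE{}$. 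The identification of $\upSigma$ via the Frobenius structure is also exactly as in the paper.
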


\begin{proof}
  \proofoftag{a} Assume that $X$ belongs to ${}^\perp\lE{}$. For every $q \in Q$ the left $\alg$-module $\Eq{q}(X) = X(q)$ is projective by  \prpref{E-perp-inclusions}\prtlbl{a}, and hence $\Fq{q}\Eq{q}(X)$ is a projective object in $\lMod{Q,\,\alg}$ by \cite[Lem.~3.11]{HJ2021}. Since the class $\lPrj{Q,\,\alg}$ is closed under coproducts, it follows that the object $\mathbb{F}(X) = \bigoplus_{q \in Q} \Fq{q}\Eq{q}(X)$ belongs to $\lPrj{Q,\,\alg}$. \lemref{F-G-iso}
 and \prpref{F-G-locally-finite} now imply that also $\mathbb{G}(X) = \prod_{q \in Q} \Gq{q}\Eq{q}(X)$ belongs to $\lPrj{Q,\,\alg}$. In particular, the objects $X$, $\mathbb{F}(X)$, and $\mathbb{G}(X)$ are in \smash{${}^\perp\lE{}$}. As the exact sequences $\mathfrak{F}(X)$ and $\mathfrak{G}(X)$ are objectwise split by \prpref{mathfrak-ses}, it now follows from \thmref{closure-properties}\prtlbl{a,b} that $\mathbb{K}(X)$ and $\mathbb{C}(X)$ are in ${}^\perp\lE{}$.
  
  As the class ${}^\perp\lE{}$ is closed under extensions in the abelian category $\lMod{Q,\,\alg}$, it naturally inherits the structure of an exact category where the conflations are short exact sequences $0 \to Y' \to Y \to Y'' \to 0$ in $\lMod{Q,\,\alg}$ with $Y',Y, Y'' \in {}^\perp\lE{}$. This explains the first bullet point. The second bullet point is evident from the general construction/definition of the translation functor in the stable category of a Frobenius category; see Happel \cite[Chap.~I\S2]{Happel}. 
    
  \proofoftag{b} Dual to the proof of part \prtlbl{a}.
\end{proof}

\begin{thm}
  \label{thm:perf-triangulated}
  Assume \stpref{nilpotent}. The subcategory \smash{$\QSDperf{Q}{\alg}$} of\, \smash{$\QSD{Q}{\alg}$} is triangulated.
\end{thm}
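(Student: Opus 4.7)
The plan is to exploit the identification \smash{$\QSD{Q}{\alg} \simeq {}^\perp\lE{}/\lPrj{Q,\,\alg}$} from \ref{summary-of-HJ2021}, so that the triangulated structure comes from the Frobenius exact category ${}^\perp\lE{}$ via Happel \cite[Chap.~I\S2]{Happel}. Closure under isomorphism is built into \dfnref{perfect}, so the only work is to verify closure under $\upSigma^{\pm 1}$ and under taking mapping cones.

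For the shift, let $X \in \QSDperf{Q}{\alg}$ and choose $K \in {}^\perp\lE{} \cap \finsupp{Q,\,\alg}$ with $X \cong K$ in $\QSD{Q}{\alg}$. By \prpref{conflation}\prtlbl{a} one has $\upSigma K \cong \mathbb{C}(K)$ and $\upSigma^{-1}K \cong \mathbb{K}(K)$ in the stable category, and $\mathbb{C}(K), \mathbb{K}(K) \in {}^\perp\lE{}$. By \lemref{ses-K}\prtlbl{b} the objects $\mathbb{C}(K)$ and $\mathbb{K}(K)$ also lie in $\finsupp{Q,\,\alg}$, so $\upSigma^{\pm 1}X$ is perfect.

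For the cone, let $X \xrightarrow{\varphi} Y \to Z \to \upSigma X$ be a distinguished triangle in $\QSD{Q}{\alg}$ with $X, Y$ perfect. Choose representatives $K, L \in {}^\perp\lE{} \cap \finsupp{Q,\,\alg}$ with $X \cong K$ and $Y \cong L$, and lift $\varphi$ to a morphism $f \colon K \to L$ in ${}^\perp\lE{}$. By \prpref{conflation}\prtlbl{a}, the sequence $\mathfrak{G}(K) = 0 \to K \xrightarrow{\eta^K} \mathbb{G}(K) \to \mathbb{C}(K) \to 0$ is a conflation in the Frobenius category ${}^\perp\lE{}$ with $\mathbb{G}(K) \in \lPrj{Q,\,\alg}$. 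Forming the pushout of $\eta^K$ along $f$ gives a commutative diagram with exact rows
\begin{equation*}
  \xymatrix@C=1.5pc@R=2pc{
    0 \ar[r] & K \ar[d]_-{f} \ar[r]^-{\eta^K} & \mathbb{G}(K) \ar[d] \ar[r] & \mathbb{C}(K) \ar@{=}[d] \ar[r] & 0 \\
    0 \ar[r] & L \ar[r] & M \ar[r] & \mathbb{C}(K) \ar[r] & 0\,,
  }
\end{equation*}
and the bottom row is (by the standard construction in the stable category of a Frobenius category) a conflation whose associated distinguished triangle $K \xrightarrow{f} L \to M \to \upSigma K$ realizes the given triangle, so $Z \cong M$ in $\QSD{Q}{\alg}$. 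Since $L, \mathbb{C}(K) \in {}^\perp\lE{}$ (using \prpref{conflation}\prtlbl{a} for the latter), the class ${}^\perp\lE{}$ being closed under extensions in $\lMod{Q,\,\alg}$ yields $M \in {}^\perp\lE{}$. Since $L, \mathbb{C}(K) \in \finsupp{Q,\,\alg}$ (using \lemref{ses-K}\prtlbl{b}), \lemref{extension} applied to the bottom row gives $M \in \finsupp{Q,\,\alg}$. Thus $Z \cong M$ is perfect.

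The main obstacle is purely organisational: one needs to remember that the triangulated structure on $\QSD{Q}{\alg}$ comes from the Frobenius category ${}^\perp\lE{}$, so triangles can be represented by conflations in ${}^\perp\lE{}$, and that both \prpref{conflation}\prtlbl{a} (to keep things inside ${}^\perp\lE{}$ and identify $\upSigma$) and \lemref{ses-K}\prtlbl{b} together with \lemref{extension} (to keep things inside $\finsupp{Q,\,\alg}$) must be combined in each step. No genuinely new computation is needed beyond these building blocks.
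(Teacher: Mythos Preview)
Your proposal is correct and follows essentially the same argument as the paper: you transport the problem to the stable Frobenius category ${}^\perp\lE{}/\lPrj{Q,\,\alg}$, use \prpref{conflation}\prtlbl{a} together with \lemref{ses-K}\prtlbl{b} for closure under $\upSigma^{\pm 1}$, and realize the cone via the pushout of $\mathfrak{G}(K)$ along $f$, invoking \lemref{extension} and closure of ${}^\perp\lE{}$ under extensions. If anything, your write-up is slightly more explicit than the paper's in separately verifying $M \in {}^\perp\lE{}$ via extension-closure before applying \lemref{extension} for the $\finsupp{Q,\,\alg}$ part.
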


\begin{proof}
  Write \smash{$\mathcal{T} = {}^\perp\lE{}\big/\lPrj{Q,\,\alg}$} for the stable category of the Frobenius category ${}^\perp\lE{}$. Set
\begin{equation*}
  \mathcal{K}' \,=\, {}^\perp\lE{} \,\cap\, \finsupp{Q,\,\alg}\;.
\end{equation*}
By \cite[Thm.~6.5]{HJ2021} there is an equivalence of triangulated categories, $\QSD{Q}{\alg} \simeq \mathcal{T}$; and via this equivalence, the full subcategory \smash{$\QSDperf{Q}{\alg} \subseteq \QSD{Q}{\alg}$} corresponds, in view of \dfnref{perfect}, to the full subcategory $\mathcal{T}^{\,\mathrm{perf}} \subseteq \mathcal{T}$ defined by
  \begin{equation*}
    \mathcal{T}^{\,\mathrm{perf}} \,=\,
    \left\{
      X \in \mathcal{T}\,
      \left|\!\!
      \begin{array}{l}
        \text{There is an isomorphism $X \cong K$} \\
        \text{in $\mathcal{T}$ for some object $K \in \mathcal{K'}$}
      \end{array}
      \right.\!\!\!\!
    \right\}.
  \end{equation*}  
Thus, it suffices to argue that $\mathcal{T}^{\,\mathrm{perf}}$ is a triangulated subcategory of $\mathcal{T}$. We will verify the requirements in Neeman \cite[Dfn.~1.5.1]{Nee}.

As $\mathcal{K'}$ is clearly an additive subcategory of ${}^\perp\lE{} \subseteq \lMod{Q,\,\alg}$, it follows that $\mathcal{T}^{\,\mathrm{perf}}$ is an additive subcategory of $\mathcal{T}$. By definition, $\mathcal{T}^{\,\mathrm{perf}}$ is closed under isomorphisms in $\mathcal{T}$.

Let $\upSigma$ denote the translation functor in the triangulated category $\mathcal{T}$. We must show that one has $\upSigma(\mathcal{T}^{\,\mathrm{perf}}) = \mathcal{T}^{\,\mathrm{perf}}$; equivalently, that the following two inclusions hold:
\begin{equation*}
  \upSigma(\mathcal{T}^{\,\mathrm{perf}}) \,\subseteq\, 
  \mathcal{T}^{\,\mathrm{perf}}
  \qquad \text{and} \qquad 
  \upSigma^{-1}(\mathcal{T}^{\,\mathrm{perf}}) \,\subseteq\,
  \mathcal{T}^{\,\mathrm{perf}}\;.
\end{equation*}  
By the definiton of $\mathcal{T}^{\,\mathrm{perf}}$, it suffices to prove the inclusions:
\begin{equation*}
  \upSigma(\mathcal{K'}) \,\subseteq\, 
  \mathcal{T}^{\,\mathrm{perf}}
  \qquad \text{and} \qquad 
  \upSigma^{-1}(\mathcal{K'}) \,\subseteq\,
  \mathcal{T}^{\,\mathrm{perf}}\;.
\end{equation*}  
To establish these inclusions, let $X$ be an object in $\mathcal{K'}$. By part \prtlbl{a} in \prpref{conflation} we have 
$\upSigma X \cong \mathbb{C}(X)$ and $\upSigma^{-1}X \cong \mathbb{K}(X)$, so it suffices to argue that the objects $\mathbb{C}(X)$ and $\mathbb{K}(X)$ belong to $\mathcal{K'}$, but this follows from 
\lemref{ses-K}\prtlbl{b} and \prpref{conflation}\prtlbl{a}.

It remains to prove that if $X \to Y \to Z \to \upSigma X$ is a distinguished triangle in $\mathcal{T}$ with $X,Y \in \mathcal{T}^{\,\mathrm{perf}}$, then one has $Z \in \mathcal{T}^{\,\mathrm{perf}}$. Without loss of generality, we can assume that $X$ and $Y$ belong to $\mathcal{K'}$. Let $[\alpha] \colon X \to Y$ be the morphism from $X$ to $Y$ in the given distinguished triangle; here $\alpha \colon X \to Y$ is a morphism in $\lMod{Q,\,\alg}$ (between objects $X,Y \in \mathcal{K'} \subseteq {}^\perp\lE{}$) and $[\alpha]$ denote its class (image) in the stable category $\mathcal{T}$. Consider in the abelian category $\lMod{Q,\,\alg}$ the pushout diagram below where the upper exact sequence comes from \prpref{mathfrak-ses}:
\begin{equation}
  \label{eq:pushout}
  \begin{gathered}
  \xymatrix{
    0 \ar[r] & X \ar[d]_-{\alpha} \ar@{}[dr]|-{\mathrm{(pushout)}} \ar[r] & \mathbb{G}(X) \ar[d] \ar[r] & \mathbb{C}(X) \ar@{=}[d] \ar[r] & 0
    \\
    0 \ar[r] & Y \ar[r] & P \ar[r] & \mathbb{C}(X) \ar[r] & 0\;.\mspace{-8mu}
  }
  \end{gathered}
\end{equation}
Recall from \prpref{conflation}\prtlbl{a} that $\mathbb{C}(X) \cong \upSigma X$ in $\mathcal{T}$. The diagram \smash{$X \xrightarrow{[\alpha]} Y \to P \to \upSigma X$} is a distinguished triangle in $\mathcal{T}$ (called a \emph{standard} triangle), see Happel \cite[Chap.~I\S2, (2.5)]{Happel}, and hence there exists by \cite[Rmk.~1.1.21]{Nee} an isomorphism $Z \cong P$ in $\mathcal{T}$. Thus, to show $Z \in \mathcal{T}^{\,\mathrm{perf}}$ it suffices to argue that $P \in \mathcal{K'}$. As $X \in \mathcal{K'}$ we have, as above, $\mathbb{C}(X) \in \mathcal{K'}$. Since also $Y \in \mathcal{K'}$ we can now apply \lemref{extension} to the bottom exact sequence in \eqref{pushout} to conclude that $P \in \mathcal{K'}$, as desired. 
\end{proof}

We are now ready to show Theorem~C from the Introduction.

\begin{proof}[Proof of Theorem~C]
  \thmref{perf-triangulated} shows that \smash{$\QSDperf{Q}{\alg}$} is a triangulated subcategory of \smash{$\QSD{Q}{\alg}$}. And it is a general fact that the subcategory \smash{$\QSD{Q}{\alg}^\mathrm{c}$} of compact objects is triangulated and thick, see \cite[Property 6.3]{MR1214458} or more generally \cite[Lem.~4.2.4, Rmk.~4.2.6, and Dfn.~4.2.7]{Nee}. \corref{perfect-is-compact-in-D} shows that there is always an inclusion \smash{$\QSDperf{Q}{\alg} \subseteq \QSD{Q}{\alg}^\mathrm{c}$}.
  
For each $q \in Q$ the object $\Sq{q}(\alg)$ is evidently strictly perfect, see \dfnref{perfect}, and it also belongs to ${}^\perp\lE{}$ by \prpref{E-perp-inclusions}\prtlbl{a}. Thus, $\Sq{q}(\alg)$ belongs to \smash{$\QSDperf{Q}{\alg}$}, again by \dfnref{perfect}. By Theorem~E in the Introduction (proved in \secref{Fibrant}) we have 
\begin{equation*}
  \QSD{Q}{\alg}^\mathrm{c}
  \,=\, 
  \operatorname{Thick}(\{\Sq{q}(\alg) \,|\, q \in Q \})\;.
\end{equation*}  
It follows that if the subcategory \smash{$\QSDperf{Q}{\alg}$} is thick, then one has \smash{$\QSD{Q}{\alg}^\mathrm{c} \subseteq \QSDperf{Q}{\alg}$}, and hence equality holds, i.e.~\smash{$\QSD{Q}{\alg}^\mathrm{c} = \QSDperf{Q}{\alg}$}. Conversely, if this this equality holds, then the subcategory \smash{$\QSDperf{Q}{\alg}$} must be thick, as this is true for \smash{$\QSD{Q}{\alg}^\mathrm{c}$}. 
\end{proof}

The Strong Retraction Property allows one to define a notion of cycles. 

\begin{dfn}
  \label{dfn:cycle}
Assume that $Q$ satisfies the Strong Retraction Property, i.e.~condition \eqref{strong-retraction} in \ref{Bbbk} holds.
A \emph{cycle} in $Q$ (more precisely, a \emph{$\mathfrak{r}$-cycle}, where $\mathfrak{r}$ is the pseudo-radical of $Q$) is a sequence of $n \geqslant 1$ morphisms in $Q$,
\begin{equation*}
  \xymatrix{  
    q_1 \ar[r]^-{g_1} &
    q_2 \ar[r]^-{g_2} &
    \ \cdots \ \ar[r]^-{g_{n-1}} &
    q_n \ar[r]^-{g_n} &
    q_{n+1}
  },
\end{equation*}
where $q_1 = q_{n+1}$ and $0 \neq g_i \in \mathfrak{r}$ for every $i=1,\ldots,n$. The number $n$ is called the \emph{length} of the cycle. A cycle of length $n=1$ is called a \emph{loop}.
\end{dfn}

Recall from \ref{recall-C-K} the functor $\Kq{q}$. Let $X \in \lMod{Q,\,\alg}$. As shown in \cite[Prop.~7.18]{HJ2021} the left $\alg$-module $\Kq{q}(X)$ is isomorphic to $\bigcap_{g \mspace{1mu}\in\mspace{1mu} \mathfrak{r}(q,*)} \Ker{X(g)}$ where the intersection is taken over all morphisms $g \in \mathfrak{r}$ with domain $q$. In particular, $\Kq{q}(X)$ can be seen as a submodule~of~$X(q)$.

\begin{lem}
  \label{lem:Kq-Xq}
Assume that $Q$ satisfies the Strong Retraction Property, i.e.~condition \eqref{strong-retraction} in \ref{Bbbk} holds. Assume furthermore that $Q$ has no cycles. If an object $X \neq 0$ in $\lMod{Q,\,\alg}$ has finite support, then there exists  $q \in \supp{X}$ with $\Kq{q}(X)=X(q)$.
\end{lem}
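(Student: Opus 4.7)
The plan is to argue by contradiction, producing a cycle in $Q$ out of the failure of the conclusion. First, recall from the paragraph preceding the lemma (i.e.\ from \cite[Prop.~7.18]{HJ2021}) that
\[
  \Kq{q}(X) \,\cong\, \bigcap_{g \,\in\, \mathfrak{r}(q,\,*)} \Ker{X(g)} \,\subseteq\, X(q),
\]
so $\Kq{q}(X) = X(q)$ is equivalent to $X(g) = 0$ for every $g \in \mathfrak{r}(q,p)$ and every $p \in Q$. Moreover, if $X(g) \neq 0$ for some $g \in \mathfrak{r}(q,p)$, then necessarily $X(p) \neq 0$, i.e.\ $p \in \supp{X}$, and also $g \neq 0$ in $Q$.

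Now suppose for contradiction that no $q \in \supp{X}$ satisfies $\Kq{q}(X) = X(q)$. Then for every $q \in \supp{X}$ there exist some $p \in Q$ and some $g \in \mathfrak{r}(q,p)$ with $X(g) \neq 0$; by the observation above, $p \in \supp{X}$ and $g \neq 0$. Since $X \neq 0$, the set $\supp{X}$ is nonempty, so one can pick $q_1 \in \supp{X}$ and then, applying the assumption iteratively, construct an infinite sequence
\[
  \xymatrix{
    q_1 \ar[r]^-{g_1} & q_2 \ar[r]^-{g_2} & q_3 \ar[r]^-{g_3} & \cdots
  }
\]
of objects in $\supp{X}$ together with nonzero morphisms $g_i \in \mathfrak{r}(q_i, q_{i+1})$.

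Since $\supp{X}$ is finite, the pigeonhole principle yields indices $i < j$ with $q_i = q_j$. Then
\[
  \xymatrix{
    q_i \ar[r]^-{g_i} & q_{i+1} \ar[r]^-{g_{i+1}} & \cdots \ar[r]^-{g_{j-1}} & q_j = q_i
  }
\]
is a sequence of $n = j - i \geqslant 1$ nonzero morphisms in $\mathfrak{r}$ starting and ending at the same object, i.e.\ an $\mathfrak{r}$-cycle in the sense of \dfnref{cycle}. This contradicts the assumption that $Q$ has no cycles, and the lemma follows.

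The proof is essentially a finite-graph argument and I expect no serious obstacle; the only things worth verifying carefully are (i) that the failure of $\Kq{q}(X) = X(q)$ really produces a nonzero $g$ with codomain in $\supp{X}$ (for which one uses the description of $\Kq{q}(X)$ recalled above together with the fact that $X(p) = 0$ forces $X(g) = 0$), and (ii) that a repeated vertex in the infinite chain of nonzero $\mathfrak{r}$-morphisms qualifies as a cycle in the strict sense of \dfnref{cycle} — which it does, since each $g_i$ is a nonzero element of $\mathfrak{r}$.
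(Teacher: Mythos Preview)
Your proof is correct and follows essentially the same approach as the paper's: both argue by contradiction, building an infinite chain of nonzero morphisms in $\mathfrak{r}$ through $\supp{X}$. The only cosmetic difference is the order in which the hypotheses are invoked---the paper uses the no-cycles assumption at each step to keep all the $q_i$ distinct and then contradicts finiteness of the support, whereas you use finiteness of the support (pigeonhole) to force a repeat and then contradict the no-cycles assumption.
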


\begin{proof}
  Suppose (for contradiction) that $\Kq{q}(X) \neq X(q)$ holds for every $q \in \supp{X}$. Since $X$ is non-zero, it has non-empty support, so we can choose an element $q_1 \in \supp{X}$. By assumption we have $\Kq{q_1}(X) \neq X(q_1)$; this means that there exists some $g_1 \colon q_1 \to q_2$ in $\mathfrak{r}$ with $X(g_1) \neq 0$; in particular, $g_1 \neq 0$ and $q_2 \in \supp{X}$. Note that $q_1 \neq q_2$ as $Q$ has no cycles. By assumption we have $\Kq{q_2}(X) \neq X(q_2)$, so as before there exists some $g_2 \colon q_2 \to q_3$ in $\mathfrak{r}$ with $X(g_2) \neq 0$; in particular, $g_2 \neq 0$ and $q_3 \in \supp{X}$. Note that $q_3 \neq q_1$ and $q_3 \neq q_2$ as $Q$ has no cycles. Continuing in this manner, we construct an infinite sequence, 
\begin{equation*}
  \xymatrix{  
    q_1 \ar[r]^-{g_1} &
    q_2 \ar[r]^-{g_2} &
    q_3 \ar[r]^-{g_3} &
    \ \cdots 
  },
\end{equation*}
of morphisms in $\mathfrak{r}$ where each $X(g_i)$ is non-zero and 
$q_1,q_2,q_3,\ldots$ are different objects in $\supp{X}$. However, this contradicts the assumption that $X$ has finite support.
\end{proof}

\begin{rmk}
  An similar argument shows that if an object $X \neq 0$ in $\lMod{Q,\,\alg}$ has finite support, then there also exists some $p \in \supp{X}$ with $\Cq{p}(X)=X(p)$.
\end{rmk}

We now prove Theorem~B from the Introduction.

\begin{proof}[Proof of Theorem~B]
  \proofoftag{b} If $\alg$ has finite left global dimension, the inclusion $\finsupp{Q,\,\alg} \,\subseteq\, {}^\perp\lE{}$ follows from \dfnref{perfect} and Theorem~E from the Introduction (proved in \secref{Fibrant}).
  
Next consider the case where $Q$ has no cycles (and $\alg$ is arbitrary). Let \smash{$X \in \finsupp{Q,\,\alg}$}; we must show that $X$ belongs to ${}^\perp\lE{}$. We argue by induction on the cardinality $n = |\supp{X}|$ of the support of $X$. If $n=0$, then $\supp{X}=\varnothing$ and hence $X=0 \in {}^\perp\lE{}$. Now assume that $n>0$ and that every \smash{$Y \in \finsupp{Q,\,\alg}$} with $|\supp{Y}|<n$ belongs to ${}^\perp\lE{}$. By \lemref{Kq-Xq} there exists $q \in \supp{X}$ with $\Kq{q}(X)=X(q)$. Let $\varepsilon_q$ be the counit of the adjunction $(\Sq{q},\Kq{q})$ from \ref{recall-C-K} and note that the morphism   \vspace*{-1ex}
\begin{equation*}
  \xymatrix{
    \Sq{q}(X(q)) \,=\, \Sq{q}\Kq{q}(X) \ar[r]^-{\varepsilon_q^X} & X
  }
\end{equation*}
is monic as $\varepsilon_q^X(q) \colon X(q) \to X(q)$ is the identity map and $\varepsilon_q^X(p) \colon 0 \to X(p)$ is the zero map for $p \neq q$. Thus there is a short exact sequence in $\lMod{Q,\,\alg}$,
\begin{equation*}
  0 \longrightarrow \Sq{q}(X(q)) \longrightarrow X \longrightarrow Y \longrightarrow 0\;,
\end{equation*}
where $Y(q)=0$ and $Y(p)=X(p)$ for $p \neq q$. It follows that \smash{$Y \in \finsupp{Q,\,\alg}$} with $|\supp{Y}|=n-1$, and hence $Y$ belongs to ${}^\perp\lE{}$ by the induction hypothesis. As the left $\alg$-module $X(q)$ is (finitely generated and) projective, one has $\Sq{q}(X(q)) \in {}^\perp\lE{}$ by \prpref{E-perp-inclusions}\prtlbl{a}. As the class ${}^\perp\lE{}$ is closed under extensions, see \thmref{closure-properties}\prtlbl{a}, we conclude that $X$ is in ${}^\perp\lE{}$.

\proofoftag{c} Having established the inclusion $\finsupp{Q,\,\alg} \subseteq {}^\perp\lE{}$, the assertion follows directly from the definition, \dfnref[]{perfect}, of perfect objects. 

\proofoftag{a} By part \prtlbl{c}, every strictly perfect object is perfect in $\QSD{Q}{\alg}$ and every such object is compact in $\QSD{Q}{\alg}$ by \corref{perfect-is-compact-in-D}.
\end{proof}

We end this paper with an appendix that contains a proof of Theorem~A from the Introduction. In fact, we prove the more general   \thmref{counterexample}, which shows that the conclusions in parts \prtlbl{a}--\prtlbl{c} of Theorem~B (from the Introduction) may fail without further assumptions on the category $Q$ or the ring $\alg$ (like conditions \rqmlbl{1} or \rqmlbl{2} of that theorem). 

\appendix

\section{An example of a strictly perfect non-compact object}

\label{app:example}

Let $\Bbbk$ be any commutative, noetherian, and hereditary ring  (e.g.~$\Bbbk$ is a field or $\Bbbk=\mathbb{Z}$) as in \stpref{nilpotent}, and consider the commutative noetherian $\Bbbk$-algebra 
\begin{equation*}
  \alg \,=\, \Bbbk[X,Y]/(X^2,XY)\;.
\end{equation*}  
We write $x$ and $y$ for the images of $X$ and $Y$ in the quotient ring $A$. Note that in $\alg$ one has $x^2=xy=0$; moreover, $\alg$ is a free $\Bbbk$-module with basis $\{1,x,y,y^2,y^3,\ldots\}$.

We consider the Jordan quiver; it has a single vertex ($*$) and a single loop ($\varepsilon$):
\begin{equation*}
  \upGamma : \quad
  \xymatrix@C=-1.2ex{
  \ast & \ar@(ur,dr)[]^-{\varepsilon}
  }
\end{equation*}
equipped with the relation $\varepsilon^2=0$. Let $Q$ be the path category of $\upGamma$ over $\Bbbk$ modulo the~ideal generated by $\varepsilon^2$; in symbols: $Q=\Bbbk\upGamma/(\varepsilon^2)$. Clearly there is an equivalence of categories,
\begin{equation*}
  \lMod{Q,\,\alg} \,\simeq\, \operatorname{Diff}(\alg)\;,
\end{equation*}
where $\operatorname{Diff}(\alg)$ denotes the category of differential $\alg$-modules. An object in $\operatorname{Diff}(\alg)$ is a pair $(X,d)$ where $X$ is an $\alg$-module and $d$ is an endomorphism of $X$ with $d^2=0$.

\begin{rmk}
  As the path algebra of $\upGamma$ over $\Bbbk$ modulo the ideal generated by $\varepsilon^2$ is the ring $\Bbbk[\varepsilon]/(\varepsilon^2)$ of dual numbers over $\Bbbk$, the category $\lMod{Q,\,\alg}$ is also equivalent to the category of modules over $A[\varepsilon]/(\varepsilon^2)$. However, we will work with the category $\operatorname{Diff}(\alg)$ instead.
\end{rmk}

\begin{ipg}
  \label{Diff}
It is easily seen that $Q$ satisfies all the assumptions in \ref{Bbbk} and the ideal $\mathfrak{r} = (\varepsilon)$ generated by $\varepsilon$ is a pseudo-radical in $Q$ with $\mathfrak{r}^2=0$. Hence we are in the setting of \stpref{nilpotent} so the results and definitions from \ref{summary-of-HJ2021} apply. The category $Q$ has only one object, which we denote by ``$*$''. In this case the funtors $\Fq{*}$ and $\Sq{*}$ from \ref{recall-F-G} and \ref{recall-C-K} are given by
\begin{equation*}
  \Fq{*}(M) \,=\,
  \big(
  M^2\,,
\left[\begin{smallmatrix} 
       0 & 0 \\ 1 & 0
    \end{smallmatrix}\right] \big)
  \qquad \text{and} \qquad
  \Sq{*}(M) \,=\, (M,0)  
\end{equation*}
for $M \in \lMod{\alg}$; elements in $M^2$ are viewed as column vectors. The stalk differential module $\stalkco{*} = \Sq{*}(\Bbbk) = (\Bbbk,0)$ in $\operatorname{Diff}(\Bbbk)$, see \ref{consequences-of-srp}, has the following projective resolution:
\begin{equation*}
  \xymatrix{
    \cdots 
    \ar[r]^-{\left[\begin{smallmatrix} 
       0 & 0 \\ 1 & 0
    \end{smallmatrix}\right]}     
    &
    \Fq{*}(\Bbbk) 
    \ar[r]^-{\left[\begin{smallmatrix} 
       0 & 0 \\ 1 & 0
    \end{smallmatrix}\right]} 
    &
    \Fq{*}(\Bbbk) 
    \ar[r]^-{\left[\begin{smallmatrix} 
       0 & 0 \\ 1 & 0
    \end{smallmatrix}\right]} 
    &
    \Fq{*}(\Bbbk) 
    \ar[r]^-{\left[\begin{smallmatrix} 
       1 & 0
    \end{smallmatrix}\right]} 
    &
    \stalkco{*} \ar[r] & 0
  }.
\end{equation*}
Let $(X,d)$ be in $\operatorname{Diff}(\alg)$. Applying the functor $\Hom{\mspace{1mu}\operatorname{Diff}(\Bbbk)}{-}{(X,d)}$ to the non-augmented version of the projective resolution above, and using the natural isomorphism 
\begin{equation*}
  \Hom{\mspace{1mu}\operatorname{Diff}(\Bbbk)}{\Fq{*}(\Bbbk)}{(X,d)} \,\cong\,  \Hom{\Bbbk}{\Bbbk}{X} \,\cong\, X \vspace*{0.5ex}
\end{equation*}
from \ref{recall-F-G}, one gets the complex \smash{$0 \longrightarrow X \stackrel{d}{\longrightarrow} X \stackrel{d}{\longrightarrow} X \stackrel{d}{\longrightarrow} \cdots$}. It follows that the cohomology $\cH[i]{*}(X,d)$ for $i>0$, see \ref{consequences-of-srp}, is the ordinary (co)homology $\operatorname{H}(X,d) = \Ker{d}/\Im{d}$ for differential modules. The same is true for homology, i.e.~$\hH[i]{*} = \operatorname{H}$ for every $i>0$. Thus:
\begin{itemlist}
\item A differential $\alg$-module $(X,d)$ is exact in the sense of \cite[Dfn.~4.1]{HJ2021} (see also \ref{summary-of-HJ2021}) if and only if $\operatorname{H}(X,d) = 0$, that is, $\Ker{d}=\Im{d}$. This follows from \cite[Thm.~7.1]{HJ2021}.

\item A morphism $\varphi$ of differential $\alg$-modules is a weak equivalence in the projective (or injective) model structure on $\operatorname{Diff}(\alg)$, see \cite[Thm.~6.1 and Prop.~6.3]{HJ2021} (and \ref{summary-of-HJ2021}) if and only if $\operatorname{H}(\varphi)$ is an isomorphism. This follows from \cite[Thm.~7.2]{HJ2021}.
\end{itemlist}
\end{ipg}

As $x^2=0$ holds in $\alg$ the pair $(\alg,x)$ is a differential $\alg$-module. The goal of this appendix is to prove the next result; it contains Theorem~A from the Introduction and shows that 
the conclusions in parts \prtlbl{a}--\prtlbl{c} of Theorem~B (from the Introduction) may fail without further assumptions on $Q$ or $\alg$. Note that the category $Q$ under investigation in this appendix has a loop, and the ring $\alg$ has infinite global dimension, so neither assumption \rqmlbl{1} nor \rqmlbl{2} in Theorem~B is satisfied in this case.

\begin{thm}
  \label{thm:counterexample}
  Let the rings $\Bbbk$ and $\alg$ be as above. The following assertions hold.
  \begin{prt}
  \item $(\alg,x)$ is a strictly perfect object in $\operatorname{Diff}(\alg)$.
  \item $(\alg,x)$ does not belong to ${}^\perp\lE{}$.
  \item $(\alg,x)$ is neither perfect nor compact in $\operatorname{Ho}(\operatorname{Diff}(\alg))$.
  \end{prt}
\end{thm}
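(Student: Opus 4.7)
The plan is to dispatch (a) from the definitions, prove (b) by an explicit $\mathrm{Ext}^1$ computation via a $2$-periodic projective resolution of $(\alg, x)$, and derive (c) from (b) using \corref{perfect-is-compact-in-D}, Theorem~D, and the closure arguments underlying \thmref{perf-triangulated}. Part (a) is immediate: the Jordan category $Q$ has the single object $*$, so $\operatorname{supp}(\alg, x) = \{*\}$ is finite, and $(\alg, x)(*) = \alg$ is free of rank one, hence finitely generated and projective; by \dfnref{perfect} this gives $(\alg, x) \in \finsupp{Q, \alg}$.

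For part (b), I would first verify that the map $\pi \colon \Fq{*}(\alg) \to (\alg, x)$ sending $(a, b) \mapsto a + xb$ is a morphism of differential modules, and that its kernel is isomorphic to $(\alg, -x)$ via $b \mapsto (-xb, b)$. Iterating the same recipe on $(\alg, -x)$ yields the $2$-periodic projective resolution
\[
  \cdots \longrightarrow \Fq{*}(\alg) \longrightarrow \Fq{*}(\alg) \longrightarrow \Fq{*}(\alg) \longrightarrow (\alg, x) \longrightarrow 0
\]
whose kernels alternate between $(\alg, x)$ and $(\alg, -x)$. Applying $\Hom{\operatorname{Diff}(\alg)}{-}{(E, d_E)}$ and using the adjunction $(\Fq{*}, \Eq{*})$ to identify $\Hom{\operatorname{Diff}(\alg)}{\Fq{*}(\alg)}{(E, d_E)}$ with the underlying module $E$ via $\phi \leftrightarrow \phi(1, 0)$, the induced coboundary operators become multiplication by $d_E - x$ and $d_E + x$, alternately, and so
\[
  \Ext{\operatorname{Diff}(\alg)}{1}{(\alg, x)}{(E, d_E)} \,\cong\, \ker(d_E + x)/\operatorname{im}(d_E - x).
\]
Now take $E = (\alg/y\alg, x) \cong (\Bbbk[X]/(X^2), x)$ with $d_E$ equal to multiplication by $\bar x$; exactness of $E$ is immediate since $\ker(\bar x\,\cdot) = \operatorname{im}(\bar x\,\cdot) = \Bbbk\bar x$ inside $\Bbbk[X]/(X^2)$. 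With $d_E = \bar x$ we have $d_E - x = 0$ and $d_E + x = 2\bar x$, so the Ext group collapses to $\ker(2\bar x \colon E \to E)$, which contains the nonzero element $\bar x$ in every characteristic of $\Bbbk$ (and equals all of $E$ when $\Bbbk$ has characteristic $2$). Hence $\Ext{\operatorname{Diff}(\alg)}{1}{(\alg, x)}{E} \neq 0$ with $E \in \lE{}$, so $(\alg, x) \notin {}^\perp\lE{}$.

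For part (c), \corref{perfect-is-compact-in-D} gives \smash{$\QSDperf{Q}{\alg} \subseteq \QSD{Q}{\alg}^{\mathrm{c}}$}, so it suffices to rule out compactness. Suppose for contradiction that $(\alg, x)$ is compact in $\QSD{Q}{\alg}$. Then Theorem~D places $(\alg, x)$ inside $\operatorname{Thick}\{\Sq{*}(\alg)\} = \operatorname{Thick}\{(\alg, 0)\}$. The class ${}^\perp\lE{} \cap \finsupp{Q, \alg}$ contains $(\alg, 0) = \Sq{*}(\alg)$ by \prpref{E-perp-inclusions}(a), and the closure arguments from the proof of \thmref{perf-triangulated} --- \lemref{ses-K}(b), \prpref{conflation}(a), and the pushout construction for cones --- show that this class is closed under shifts, cones, and summands in the stable category, and hence represents all of $\operatorname{Thick}\{(\alg, 0)\}$. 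So $(\alg, x)$ would be perfect; pick $K \in {}^\perp\lE{} \cap \finsupp{Q, \alg}$ with $K \cong (\alg, x)$ in $\QSD{Q}{\alg}$ and lift the isomorphism to a weak equivalence $f \colon K \to (\alg, x)$ in $\operatorname{Diff}(\alg)$ (using $K$ cofibrant and $(\alg, x)$ fibrant). Splicing $f$ into the canonical objectwise-split conflation $\mathfrak{F}((\alg, x))$ of \prpref{mathfrak-ses} and applying \thmref{closure-properties}(b) --- objectwise-splitness is automatic since the support is concentrated at $*$ and $\alg$ is projective over itself --- we conclude $(\alg, x) \in {}^\perp\lE{}$, contradicting (b).

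The main obstacle lies in part (c): establishing that $\operatorname{Thick}\{(\alg, 0)\} \subseteq \QSD{Q}{\alg}$ is represented by objects in ${}^\perp\lE{} \cap \finsupp{Q, \alg}$, and then converting the weak equivalence $K \to (\alg, x)$ into an objectwise-split exact sequence amenable to \thmref{closure-properties}(b). If this transfer proves too delicate, a viable fallback is to bypass Theorem~D entirely and exhibit directly, exploiting the infinite left global dimension of $\alg$, a countable family $\{Y_n\}_{n \in \mathbb{N}}$ in $\operatorname{Diff}(\alg)$ for which $\Hom{\QSD{Q}{\alg}}{(\alg, x)}{\bigoplus_n Y_n}$ differs from $\bigoplus_n \Hom{\QSD{Q}{\alg}}{(\alg, x)}{Y_n}$, contradicting compactness directly.
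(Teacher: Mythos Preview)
Your arguments for (a) and (b) are correct. In fact your proof of (b) is considerably more elementary than the paper's: the paper first builds an explicit infinite-rank cofibrant replacement $(\alg^{(\mathbb{N})},\partial)$ of $(\alg,x)$ via Zeckendorf expansions (\prpref{cof-repl}), adds on a copy of $\Fq{*}(\alg)$ to obtain a surjective weak equivalence $\psi$, and then shows by hand that $\psi$ admits no section, so the resulting short exact sequence with exact kernel does not split. Your direct computation of $\Ext{\operatorname{Diff}(\alg)}{1}{(\alg,x)}{(\alg/(y),\bar x)}$ via the $2$-periodic resolution bypasses all of this and is a genuine simplification for (b).

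Part (c), however, has a real gap. Your deduction ``compact $\Rightarrow$ perfect'' rests on the claim that the image of ${}^\perp\lE{}\cap\finsupp{Q,\alg}$ in the stable category is closed under direct summands, so that it contains all of $\operatorname{Thick}\{(\alg,0)\}$. But the proof of \thmref{perf-triangulated} only establishes closure under shifts and cones; closure under summands is precisely the content of the Conjecture stated immediately after Theorem~C, and it is left open. So this step is circular. Moreover, even granting that $(\alg,x)$ is perfect, your ``splicing'' step does not yield $(\alg,x)\in{}^\perp\lE{}$: arranging the weak equivalence $f\colon K\to(\alg,x)$ to be surjective gives a short exact sequence $0\to\ker f\to K\to(\alg,x)\to 0$ with $\ker f\in\lE{}$ (it is the kernel of a trivial fibration), not $\ker f\in{}^\perp\lE{}$, so \thmref{closure-properties}(b) does not apply. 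Being isomorphic in $\QSD{Q}{\alg}$ to a cofibrant object does not force the object itself to be cofibrant.

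The paper's proof of (c) needs the heavy machinery you avoided in (b): it uses the explicit cofibrant replacement $(\alg^{(\mathbb{N})},\partial)$, exhibits it as the homotopy colimit of the finite truncations $(\alg^n,\partial_n)$ (\corref{hocolim}), and observes that compactness would make $(\alg^{(\mathbb{N})},\partial)$ a retract of some $(\alg^n,\partial_n)$ in the stable category. Base-changing along $\alg\twoheadrightarrow\Bbbk$ kills the differentials and, after taking homology, forces $\Bbbk^{(\mathbb{N})}$ to be a summand of $\Bbbk^n$ in $\lMod{\Bbbk}$, a contradiction. Your fallback suggestion (exhibit an explicit family witnessing non-compactness) is in spirit close to this, but you would still need a concrete handle on $\Hom{\QSD{Q}{\alg}}{(\alg,x)}{-}$, and for that a cofibrant replacement of $(\alg,x)$ seems unavoidable.
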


As stated in part \prtlbl{b} of the theorem above, $(\alg,x)$ does not belong to ${}^\perp\lE{}$; that is, $(\alg,x)$ is not a cofibrant object in the projective model structure on $\operatorname{Diff}(\alg)$. In \prpref{cof-repl} below we give a cofibrant replacement of $(\alg,x)$; to construct it we need Zeckendorf expansions, which we now explain.

\begin{ipg}
  \label{Zeckendorf}
  Consider the Fibonacci numbers $F_{-2}, F_{-1}, F_0, F_1, F_2, F_3,\ldots$ defined by
  \begin{equation*}
     F_{-2}=0 \ \ , \ \ F_{-1}=1
     \qquad \text{and} \qquad
     F_i \,=\, F_{i-1}+F_{i-2} 
     \ \ \text{ for } \ \ i \geqslant 0\;;
  \end{equation*}
  that is $F_0=1$,\, $F_1=2$,\, $F_2=3$,\, $F_3=5$,\, $F_4=8$,\, $F_5=13$,\, $F_6=21$,\, $F_7=34$ etc. 
  
  Zeckendorf's theorem \cite{Zeckendorf} (see also Dekking \cite{Dekking}) asserts that for every $n \in \mathbb{N}_0$ there exists a unique sequence $[n]_\mathrm{Z} = \cdots d_2d_1d_0$ of digits $d_i \in \{0,1\}$, called the \emph{Zeckendorf expansion} of $n$, satisfying
\begin{equation*}
  n \,=\, d_0F_0 + d_1F_1 + d_2F_2 + \cdots 
  \qquad \text{and} \qquad 
  d_i\cdot d_{i+1}=0
  \quad \text{for all} \quad i \geqslant 0\;.
\end{equation*}  
The last condition means that in the Zeckendorf expansion the digit combination $\cdots 11 \cdots$ (with two consecutive 1's) is not allowed. Without this condition the Zeckendorf expansion would not be unique as e.g.~$6 = F_0+F_1+F_2 = F_0+F_3$. 

A number $n \in \mathbb{N}_0$ is said to have \emph{even} respectively, \emph{odd}, Zeckendorf expansion if one has $d_0=0$, respectively, $d_0=1$, in the Zeckendorf expansion $[n]_\mathrm{Z} = \cdots d_2d_1d_0$ of $n$. For example, the numbers $49$ and $50$ have even Zeckendorf expansions $[49]_\mathrm{Z} = 1010001\underline{0}$ and $[50]_\mathrm{Z} = 1010010\underline{0}$ while 51 has odd Zeckendorf expansion $[51]_\mathrm{Z} = 1010010\underline{1}$.

For every $i \geqslant 1$ we let $e(i)$, respectively, $o(i)$ be the $i^\mathrm{th}$ number in $\mathbb{N}_0$ whose Zeckendorf expansion is even, respectively, odd. The first few values of the functions $e$ and $o$ are:   \vspace*{0.5ex}
\begin{equation*}
  \begin{array}{|c|cccccccccccccccc|}
  \hline
  \pmb{i} & \pmb{1} & \pmb{2} & \pmb{3} & \pmb{4} & \pmb{5} & \pmb{6} & \pmb{7} & \pmb{8} & \pmb{9} & \pmb{10} & \pmb{11} & \pmb{12} & \pmb{13} & \pmb{14} & \pmb{15} & \cdots
  \\ \hline \hline
  e(i) & 0 & 2 & 3 & 5 & 7 & 8 & 10 & 11 & 13 & 15 & 16 & 18 & 20 & 21 & 23 & \cdots
  \\ \hline
  o(i) & 1 & 4 & 6 & 9 & 12 & 14 & 17 & 19 & 22 & 25 & 27 & 30 & 33 & 35 & 38 & \cdots 
  \\ \hline
  \end{array}
  \vspace*{0.5ex}
\end{equation*}
The number sequences $e(1), e(2), e(3),\ldots$ and $o(1), o(2), o(3),\ldots$ can be found in the OEIS \cite[\href{https://oeis.org/A022342}{A022342} and \href{https://oeis.org/A003622}{A003622}]{OEIS}. By  Zeckendorf's theorem, mentioned above, one has 
\begin{equation*}
  \{e(1), e(2), e(3),\ldots\} \ \uplus \ \{o(1), o(2), o(3),\ldots\} \,=\, \mathbb{N}_0\;.
\end{equation*}
We shall need the following well-known formula, also recorded in the OEIS \cite[\href{https://oeis.org/A003622}{A003622}]{OEIS},
\begin{equation}
  \label{eq:o-e}
  o(i) \,=\,  e(e(i)+1)+1
  \quad \text{for every} \quad i \geqslant 1\;.
\end{equation}
\end{ipg} 

\setcounter{MaxMatrixCols}{25}
\newcommand{\z}{\color{lightgray}0}

\begin{dfn}
  \label{dfn:partial}
  Define an $\mathbb{N} \times \mathbb{N}$ matrix $\partial$ with entries from $\alg$ as follows:
  \begin{itemlist}
  \item If $i \in \mathbb{N}$ is a number with odd Zeckendorf expansion ($i= 1, 4, 6, 9, \ldots$), then 
  \begin{equation}
    \label{eq:i-odd}
    \begin{gathered}
    \partial_{ij} \,=\, 
    \left\{\!\!\!
      \begin{array}{ccl}
        x & \text{if} & j=e(i+1) \\
        0 & \multicolumn{2}{l}{\text{otherwise}\,.}
      \end{array}
    \right.
    \end{gathered}
  \end{equation}  

  \item  If $i \in \mathbb{N}$ is a number with even Zeckendorf expansion ($i= 2, 3, 5, 7, 8, \ldots$), then
  \begin{equation}
    \label{eq:i-even}
    \begin{gathered}
    \partial_{ij} \,=\, 
    \left\{\!\!\!
      \begin{array}{ccl}
        x & \text{if} & j=e(i+1) \\
        y & \text{if} & j=e(i+1)+1 \\
        0 & \multicolumn{2}{l}{\text{otherwise}\,.} 
      \end{array}
    \right.
    \end{gathered}
  \end{equation}  

  \end{itemlist}
  Thus, the upper left $13 \times 20$ corner of $\partial$ looks like this: \vspace*{0.5ex}
  \begin{equation*}
    \partial \,=\,
    \begin{bmatrix}
      \z & x & \z & \z & \z & \z & \z & \z & \z & \z & \z & \z & \z & \z & \z & \z & \z & \z & \z & \z & \cdots
      \\ 
      \z & \z & x & y & \z & \z & \z & \z & \z & \z & \z & \z & \z & \z & \z & \z & \z & \z & \z & \z & \cdots
      \\ 
      \z & \z & \z & \z & x & y & \z & \z & \z & \z & \z & \z & \z & \z & \z & \z & \z & \z & \z & \z & \cdots
      \\ 
      \z & \z & \z & \z & \z & \z & x & \z & \z & \z & \z & \z & \z & \z & \z & \z & \z & \z & \z & \z & \cdots
      \\ 
      \z & \z & \z & \z & \z & \z & \z & x & y & \z & \z & \z & \z & \z & \z & \z & \z & \z & \z & \z & \cdots
      \\ 
      \z & \z & \z & \z & \z & \z & \z & \z & \z & x & \z & \z & \z & \z & \z & \z & \z & \z & \z & \z & \cdots
      \\ 
      \z & \z & \z & \z & \z & \z & \z & \z & \z & \z & x & y & \z & \z & \z & \z & \z & \z & \z & \z & \cdots
      \\ 
      \z & \z & \z & \z & \z & \z & \z & \z & \z & \z & \z & \z & x & y & \z & \z & \z & \z & \z & \z & \cdots
      \\ 
      \z & \z & \z & \z & \z & \z & \z & \z & \z & \z & \z & \z & \z & \z & x & \z & \z & \z & \z & \z & \cdots
      \\ 
      \z & \z & \z & \z & \z & \z & \z & \z & \z & \z & \z & \z & \z & \z & \z & x & y & \z & \z & \z & \cdots
      \\ 
      \z & \z & \z & \z & \z & \z & \z & \z & \z & \z & \z & \z & \z & \z & \z & \z & \z & x & y & \z & \cdots
      \\ 
      \z & \z & \z & \z & \z & \z & \z & \z & \z & \z & \z & \z & \z & \z & \z & \z & \z & \z & \z & x & \cdots
      \\ 
      \z & \z & \z & \z & \z & \z & \z & \z & \z & \z & \z & \z & \z & \z & \z & \z & \z & \z & \z & \z & \cdots
      \\ 
      \vdots & \vdots & \vdots & \vdots & \vdots & \vdots & \vdots & \vdots & \vdots & \vdots & \vdots & \vdots & \vdots & \vdots & \vdots & \vdots & \vdots & \vdots & \vdots & \vdots & \ddots
    \end{bmatrix}
    \vspace*{0.5ex}
  \end{equation*}
  For every $n \geqslant 1$ let $\partial_n$ denote the $n\times n$ upper left  corner of $\partial$, for example
  \begin{equation*}
    \partial_1 \,=\,
    \begin{bmatrix}
      0
    \end{bmatrix}\;, \quad
    \partial_2\,=\,
    \begin{bmatrix}
      0 & x \\
      0 & 0
    \end{bmatrix}\;, \quad
    \partial_3\,=\,
    \begin{bmatrix}
      0 & x & 0 \\
      0 & 0 & x \\
      0 & 0 & 0
    \end{bmatrix}\;, \quad \text{and} \quad
    \partial_4\,=\,
    \begin{bmatrix}
      0 & x & 0 & 0 \\
      0 & 0 & x & y \\
      0 & 0 & 0 & 0 \\
      0 & 0 & 0 & 0
    \end{bmatrix}\;.
  \end{equation*}
\end{dfn}

\begin{rmk}
   Note that $\partial$ is both row and column finite, that is, each row and each column in $\partial$ contains only finitely many (actually at most two) non-zero entries. Futhermore, every entry in $\partial$ is either $0$, $x$ or $y$.
\end{rmk}

We view elements in $\alg^{(\mathbb{N})}$ and $\alg^n$ as column vectors.

\begin{lem}
  \label{lem:dsquared}
  One has $\partial^2=0$ and $\partial_n^2=0$ for every $n \geqslant 1$; whence the pairs $(\alg^{(\mathbb{N})},\partial)$ and $(\alg^n,\partial_n)$ are differential $\alg$-modules.
\end{lem}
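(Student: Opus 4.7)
The plan is to reduce $\partial^2=0$ to a purely entrywise check. Since every nonzero entry of $\partial$ equals $x$ or $y$, and in $\alg = \Bbbk[X,Y]/(X^2,XY)$ one has $x^2 = xy = yx = 0$, the only product of two such entries that can be nonzero in $\alg$ is $y \cdot y = y^2$. Hence the vanishing of $(\partial^2)_{ik} = \sum_{j} \partial_{ij}\partial_{jk}$ reduces to excluding the simultaneous equalities $\partial_{ij}=y$ and $\partial_{jk}=y$ for some common index $j$.

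By \dfnref{partial}, row $j$ of $\partial$ contains a $y$-entry if and only if $j$ has even Zeckendorf expansion. So the entire verification reduces to the combinatorial claim: whenever $\partial_{ij}=y$, the index $j$ has odd Zeckendorf expansion. To prove this I would simply unpack \dfnref{partial}: the hypothesis $\partial_{ij}=y$ forces $i$ to have even Zeckendorf expansion, so there is a (unique) $m \geqslant 1$ with $i = e(m)$, and moreover $j = e(i+1)+1 = e(e(m)+1)+1$. The Zeckendorf identity \eqref{o-e} now reads $e(e(m)+1)+1 = o(m)$, so $j = o(m)$ is by definition a number with odd Zeckendorf expansion, as needed.

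With the claim in hand, the endgame is a two-line case split on $\partial_{ij}$: if $\partial_{ij}=x$ then $\partial_{ij}\partial_{jk} \in x\{0,x,y\} = \{0\}$; if $\partial_{ij}=y$ then $\partial_{jk} \in \{0,x\}$ by the claim, so $\partial_{ij}\partial_{jk} \in y\{0,x\} = \{0\}$. Every summand of $\sum_j \partial_{ij}\partial_{jk}$ therefore vanishes individually, giving $\partial^2 = 0$. The identity $\partial_n^2 = 0$ needs no separate argument, since for $i,k \leqslant n$ the $(i,k)$-entry of $\partial_n^2$ equals the finite subsum $\sum_{j=1}^n \partial_{ij}\partial_{jk}$, each term of which is still individually zero by the same case split.

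The only genuine obstacle is recognising that identity \eqref{o-e} is exactly the combinatorial statement needed to rule out two consecutive $y$'s along any length-two path through $\partial$; once this is spotted the proof is a matter of bookkeeping. This is also the reason the Zeckendorf apparatus was built into \dfnref{partial} to begin with, and the argument essentially verifies that the construction has been tailored correctly.
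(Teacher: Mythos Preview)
Your proof is correct and follows essentially the same approach as the paper's: both reduce to showing that a $y$-entry in position $(i,j)$ forces $j$ to have odd Zeckendorf expansion via the identity \eqref{o-e}, so that no length-two path through $\partial$ picks up two $y$'s. The paper organises the case split by the parity of $i$ and writes out the row sum explicitly, whereas you first isolate $y\cdot y$ as the only dangerous product and then exclude it; the content is the same. One cosmetic point: since $e(1)=0$ and $i\geqslant 1$, your index $m$ with $i=e(m)$ actually satisfies $m\geqslant 2$ rather than $m\geqslant 1$, but this does not affect the argument as \eqref{o-e} is valid for all $m\geqslant 1$.
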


\begin{proof}
  Note that for each $n \geqslant 1$ we can write $\partial$ as a block matrix of the form
  \begin{equation*}
    \partial \,=\,
    \left[
    \begin{array}{c|c}
      \partial_n & * \\
      \hline
      0 & *
    \end{array}
    \right]
    \qquad \text{and hence $\partial^2$ has the form} \qquad
    \partial^2 \,=\,
    \left[
    \begin{array}{c|c}
      \partial^2_n & * \\
      \hline
      0 & *
    \end{array}
    \right]\;.    
  \end{equation*}
  Thus, it suffices to argue that $\partial^2=0$ holds. Fix $i,j \in \mathbb{N}$. Entry $(i,j)$ in the matrix $\partial^2$ is the sum \smash{$s_{i\!j} = \sum_{k \in \mathbb{N}} \partial_{ik}\partial_{kj}$}, so we must argue that $s_{i\!j}$ is zero.
  
  If $i$ has odd Zeckendorf expansion, then by \eqref{i-odd} one has \smash{$s_{i\!j} = x\cdot\partial_{e(i+1),j}$}\,, which is zero as \smash{$\partial_{e(i+1),j}$} belongs to $\{0,x,y\}$ and $x^2=xy=0$ holds in $\alg$. 
  
  If $i$ has even Zeckendorf expansion, then \eqref{i-even} yields \smash{$s_{i\!j} = x\cdot \partial_{e(i+1),j} + y\cdot \partial_{e(i+1)+1,j}$}\,. As above, the first term in this sum is zero. As the number $i$ has even Zeckendorf expansion, it has the form $i=e(\ell)$ for some $\ell >1$. Thus $e(i+1)+1 = e(e(\ell)+1)+1 = o(\ell)$ by \eqref{o-e}, so the number $e(i+1)+1$ has odd Zeckendorf expansion. Hence \smash{$\partial_{e(i+1)+1,j}$} is in $\{0,x\}$ by \eqref{i-odd}, and since $yx=0$ holds in $\alg$ the second term in the sum $s_{i\!j}$ is zero as well.
\end{proof}

\begin{lem}
  \label{lem:ses}
  For every $n \geqslant 1$ there is a short exact sequence of differential $\alg$-modules,
\begin{equation*}
  \xymatrix@C=1.3pc{
  0 \ar[r] &
  (\alg^n,\partial_n) 
  \ar[rr]^-{\iota_n \;=\; \left[\begin{smallmatrix} 1 \\ 0\end{smallmatrix}\right]} & &
  (\alg^{n+1},\partial_{n+1}) \ar[rr]^-{\left[\begin{smallmatrix} 0 & 1 \end{smallmatrix}\right]} & &
  (\alg,0) \ar[r] & 0
  }
\end{equation*}  
\end{lem}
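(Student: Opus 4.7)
The plan is to verify the three requirements that make the displayed diagram a short exact sequence of differential $\alg$-modules: (i) the underlying sequence of $\alg$-modules is short exact; (ii) the inclusion $\iota_n$ is a morphism of differential modules, i.e.~$\partial_{n+1}\iota_n = \iota_n\partial_n$; and (iii) the projection $\pi = \left[\begin{smallmatrix} 0 & 1\end{smallmatrix}\right]$ is a morphism of differential modules, i.e.~$\pi\partial_{n+1}=0$ (since the differential on $(\alg,0)$ is zero). Requirement (i) is just the elementary fact that $0 \to \alg^n \to \alg^{n+1} \to \alg \to 0$ with inclusion into the first $n$ coordinates and projection onto the last coordinate is a split short exact sequence of free $\alg$-modules.

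Both (ii) and (iii) will reduce to a single structural feature of the matrix $\partial$ from \dfnref{partial}: it is \emph{strictly upper triangular}, i.e.~$\partial_{ij}=0$ whenever $j \leqslant i$. To see this, fix $i \geqslant 1$; by \dfnref{partial} the nonzero entries of row $i$ sit in column $e(i+1)$, and possibly also in column $e(i+1)+1$ (when $i$ has even Zeckendorf expansion). So it suffices to show $e(i+1) \geqslant i+1$ for all $i \geqslant 1$. Since the sequence $(e(k))_{k \geqslant 1}$ is strictly increasing with $e(1)=0$, the inequality $e(i+1) \geqslant i$ is automatic; the equality $e(i+1)=i$ would force every element of $\{0,1,\ldots,i\}$ to have even Zeckendorf expansion, but $[1]_\mathrm{Z}=1$ is odd, so in fact $e(i+1) \geqslant i+1$.

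Given strict upper triangularity, requirement (ii) becomes a routine block computation: $\partial_{n+1}$ has the form $\left[\begin{smallmatrix} \partial_n & c \\ 0 & 0\end{smallmatrix}\right]$, where $c$ collects the first $n$ entries of column $n+1$ and the bottom row vanishes because $\partial_{n+1,j}=0$ for every $j \leqslant n+1$. Multiplying yields $\partial_{n+1}\iota_n = \left[\begin{smallmatrix} \partial_n \\ 0\end{smallmatrix}\right] = \iota_n\partial_n$. Requirement (iii) is immediate for the same reason: the entire $(n+1)$-th row of $\partial_{n+1}$ vanishes, so $\pi\partial_{n+1}=0$ as a map $\alg^{n+1} \to \alg$. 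I do not foresee any serious obstacle; the whole lemma is pure bookkeeping once one observes that the Zeckendorf-indexed recipe of \dfnref{partial} places every nonzero entry of $\partial$ strictly above the main diagonal.
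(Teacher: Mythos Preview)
Your proof is correct and follows essentially the same approach as the paper: both hinge on the observation that $\partial_{n+1}$ has block form $\left[\begin{smallmatrix} \partial_n & * \\ 0 & 0 \end{smallmatrix}\right]$, from which the short exact sequence is immediate. You supply an explicit argument (via the inequality $e(i+1) \geqslant i+1$) for why the bottom row vanishes, whereas the paper simply asserts the block form without further comment.
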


\begin{proof}
Since the differential module $(\alg^{n+1},\partial_{n+1})$ can be written as
\begin{equation*}
  (\alg^{n+1},\partial_{n+1})
  \,=\, 
  \bigg(
  \mspace{-10mu}
  {
  \renewcommand{\arraystretch}{0.8}  
  \begin{array}{c}
    \mspace{5mu}\alg^n 
    \\ \oplus \\ 
    \alg 
  \end{array}
  }\mspace{-7mu},
  \left[
  \begin{array}{c|c}
    \partial_n & * \\
    \hline
    0 & 0
  \end{array}
  \right]
  \bigg)\;,
\end{equation*}
the assertion follows immediately.
\end{proof}

\begin{lem}
  \label{lem:colim}
  In the abelian category $\operatorname{Diff}(\alg)$, the (filtered) colimt of the telescope
  \begin{equation}
    \label{eq:filtration}
    \xymatrix{
      (A,\partial_1)\, \ar@{>->}[r]^-{\iota_1} &
      (A^2,\partial_2)\, \ar@{>->}[r]^-{\iota_2} &
      (A^3,\partial_3)\, \ar@{>->}[r]^-{\iota_3} &
      \cdots
    }
  \end{equation}
  is the differential module $(\alg^{(\mathbb{N})},\partial)$. In symbols: \smash{$(\alg^{(\mathbb{N})},\partial) = \varinjlim_{\,n \geqslant 1}(\alg^n,\partial_n)$}.
\end{lem}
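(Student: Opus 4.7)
The plan is to compute the colimit first at the level of $A$-modules and then lift to $\operatorname{Diff}(A)$ using the block structure of $\partial$. Concretely, the forgetful functor $\operatorname{Diff}(A) \to \lMod{A}$ has a right adjoint (sending $M$ to $(M \oplus M, \left[\begin{smallmatrix}0&0\\1&0\end{smallmatrix}\right])$, namely the functor $\Fq{*}$ from \ref{Diff}), and hence preserves colimits. Moreover, since it is also faithful and exact, it creates filtered colimits: a cocone in $\operatorname{Diff}(A)$ is colimiting iff its underlying cocone of $A$-modules is, and the colimit differential is uniquely determined by commutation with the structure maps.

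First I would identify the underlying $A$-module colimit. The maps $\iota_n \colon A^n \to A^{n+1}$ are the standard inclusions $v \mapsto (v,0)^{\mathrm{T}}$, so the colimit of the telescope in $\lMod{A}$ is $A^{(\mathbb{N})}$, with the canonical inclusions $j_n \colon A^n \hookrightarrow A^{(\mathbb{N})}$ placing $v$ in the first $n$ coordinates. Thus, to finish the proof it is enough to produce a differential $d$ on $A^{(\mathbb{N})}$ making each $j_n$ a morphism in $\operatorname{Diff}(A)$, and to identify the unique such $d$ with $\partial$.

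Next I would verify that $\partial$ itself works. By construction, $\partial$ is a column-finite $\mathbb{N}\times\mathbb{N}$ matrix whose upper-left $n\times n$ block is $\partial_n$, so for every $n$ one has the block decomposition
\begin{equation*}
  \partial \,=\, \left[\begin{array}{c|c} \partial_n & * \\ \hline 0 & * \end{array}\right].
\end{equation*}
This is exactly the statement that $\partial \circ j_n = j_n \circ \partial_n$ for each $n \geqslant 1$, so $j_n \colon (A^n,\partial_n) \to (A^{(\mathbb{N})},\partial)$ is a morphism in $\operatorname{Diff}(A)$ compatible with the $\iota_n$, and $(A^{(\mathbb{N})},\partial)$ is a well-defined differential module by \lemref{dsquared}.

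Finally I would invoke uniqueness: any differential $d$ on $A^{(\mathbb{N})}$ satisfying $d \circ j_n = j_n \circ \partial_n$ for all $n$ is determined on each basis vector $e_k$ (which lies in the image of $j_n$ for all $n \geqslant k$) by the $k$-th column of $\partial_n$, and these columns stabilize in $n$ to the $k$-th column of $\partial$. Hence $d=\partial$, proving that $(A^{(\mathbb{N})},\partial)$ together with the inclusions $j_n$ is the colimit in $\operatorname{Diff}(A)$. The only mildly delicate point is the bookkeeping about the stabilization and column-finiteness, which is immediate from \dfnref{partial}; there is no real obstacle beyond that.
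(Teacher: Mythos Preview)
Your argument is correct and is essentially a careful unpacking of what the paper dismisses in one sentence: the paper's proof reads, in its entirety, ``This is evident from the definitions of $(\alg^n,\partial_n)$ and $(\alg^{(\mathbb{N})},\partial)$.'' Your version spells out exactly why it is evident---colimits in $\operatorname{Diff}(\alg)$ are computed on underlying modules, and the block form of $\partial$ makes the inclusions $j_n$ compatible.

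One minor correction: in the paper's notation $\Fq{*}$ is the \emph{left} adjoint of the forgetful functor $\Eq{*}$, not the right adjoint (see the adjoint triple $(\Fq{*},\Eq{*},\Gq{*})$ in \ref{recall-F-G}). The right adjoint is $\Gq{*}$. This does not affect your argument, since $\Eq{*}$ does have a right adjoint and therefore preserves colimits; and in fact for this particular $Q$ the Serre functor is the identity, so $\Fq{*}\cong\Gq{*}$ by \lemref{F-G-iso} and your formula for the adjoint is correct up to isomorphism. But you should cite $\Gq{*}$ rather than $\Fq{*}$ if you want to match the paper's conventions.
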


\begin{proof}
  This is evident from the definitions of $(\alg^n,\partial_n)$ and $(\alg^{(\mathbb{N})},\partial)$.
\end{proof}

\begin{prp}
  \label{prp:perp-E}
  The differential modules $(\alg^{(\mathbb{N})},\partial)$ and $(\alg^n,\partial_n)$ for $n \geqslant 1$ belong to ${}^\perp\lE{}$.
\end{prp}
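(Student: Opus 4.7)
The plan is to exhibit each of the two differential modules as a $\mathcal{P}$-filtered object in the sense of Proposition~\ref{prp:E-perp-inclusions}(a), where
\begin{equation*}
  \mathcal{P} \,=\, \{\Sq{*}(P) \,|\, P \in \lPrj{\alg}\}\;,
\end{equation*}
and then to conclude via the inclusion $\Filt{\mathcal{P}} \subseteq {}^\perp\lE{}$ established there (which is an application of Eklof's lemma to the fact that every $\Sq{*}(P)$ with $P$ projective lies in ${}^\perp\lE{}$).

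The key observation is that the ``basic'' factor $(\alg,0)$ from Lemma~\ref{lem:ses} is exactly $\Sq{*}(\alg)$, which lies in $\mathcal{P}$ since $\alg$ is projective over itself (see \ref{Diff} for the formula $\Sq{*}(M) = (M,0)$). Composing the embeddings $\iota_1,\iota_2,\ldots,\iota_{n-1}$ from Lemma~\ref{lem:ses}, one obtains the chain
\begin{equation*}
  0 \,\subset\, (\alg,\partial_1) \,\subset\, (\alg^2,\partial_2) \,\subset\, \cdots \,\subset\, (\alg^n,\partial_n)\;,
\end{equation*}
whose successive quotients are all isomorphic to $(\alg,0) = \Sq{*}(\alg) \in \mathcal{P}$ (again by Lemma~\ref{lem:ses}). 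This is a $\mathcal{P}$-filtration of finite length exhibiting $(\alg^n,\partial_n) \in \Filt{\mathcal{P}}$.

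Extending the chain above to all $n \geqslant 1$ and invoking Lemma~\ref{lem:colim} to identify \smash{$\varinjlim_n (\alg^n,\partial_n)$} with $(\alg^{(\mathbb{N})},\partial)$, we obtain a continuous $\mathcal{P}$-filtration of length $\omega$ showing $(\alg^{(\mathbb{N})},\partial) \in \Filt{\mathcal{P}}$. Both objects therefore lie in $\Filt{\mathcal{P}} \subseteq {}^\perp\lE{}$, completing the proof. No genuine obstacle arises: Lemmas~\ref{lem:ses} and~\ref{lem:colim} were designed precisely to produce such a filtration, and the heavy lifting — namely that $\mathcal{P} \subseteq {}^\perp\lE{}$ and that ${}^\perp\lE{}$ is closed under transfinite filtrations — was carried out in Section~\ref{sec:Fibrant}.
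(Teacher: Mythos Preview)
Your proof is correct and follows essentially the same route as the paper: exhibit the chain from \lemref{ses} and \lemref{colim} as a $\mathcal{P}$-filtration and invoke \prpref{E-perp-inclusions}\prtlbl{a}. The only detail the paper makes explicit that you leave implicit is that the \emph{first} filtration quotient $(\alg,\partial_1)$ itself lies in $\mathcal{P}$ because $\partial_1 = 0$ by \dfnref{partial}; this is not covered by \lemref{ses}, which only identifies the quotients for $n \geqslant 1$.
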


\begin{proof}
  The class $\mathcal{P}$ in \prpref{E-perp-inclusions}\prtlbl{a} is $\mathcal{P} = \{ (P,0) \, | \, P \in \lPrj{\alg} \}$. By \lemref{ses} each $\iota_n$ is a monomorphism with cokernel $(A,0)$, which belongs to $\mathcal{P}$. Recall from \dfnref{partial} that $\partial_1=0$ holds, and therefore $(A,\partial_1)$ is also equal to $(A,0)$. Hence \eqref{filtration} is a $\mathcal{P}$-filtration of \smash{$(\alg^{(\mathbb{N})},\partial) = \varinjlim_{\,n \geqslant 1}(\alg^n,\partial_n)$} and the assertion follows from \prpref{E-perp-inclusions}\prtlbl{a}.
\end{proof}

To parse the next result, recall from Neeman \cite[Dfn.~1.6.4]{Nee} the definition of \emph{homotopy colimits} in a triangulated category. Recall from \ref{summary-of-HJ2021} that ${}^\perp\lE{}$ is a Frobenius category and its stable category is therefore triangulated by Happel \cite[Chap.~I\S2]{Happel}.

\begin{cor}
  \label{cor:hocolim}
  In the stable category of the Frobenius category ${}^\perp\lE{}$, the object $(\alg^{(\mathbb{N})},\partial)$ is the homotopy colimit of the sequence \eqref{filtration}. In symbols: \smash{$(\alg^{(\mathbb{N})},\partial) = \mathrm{hocolim}_{\,n \geqslant 1}(\alg^n,\partial_n)$}.
\end{cor}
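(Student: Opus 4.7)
The plan is to realize $(\alg^{(\mathbb{N})},\partial)$ as the third vertex of the distinguished triangle that by definition computes the homotopy colimit in Neeman's sense \cite[Dfn.~1.6.4]{Nee}. Concretely, the homotopy colimit of the sequence \eqref{filtration} is the cone of the morphism $1 - \mathrm{shift}$ on $\bigoplus_{n\geq 1}(\alg^n,\partial_n)$, so the task reduces to producing a conflation in the Frobenius category ${}^\perp\lE{}$ whose associated triangle in the stable category ${}^\perp\lE{}/\lPrj{Q,\,\alg}$ has that shape.

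First I will verify that ${}^\perp\lE{}$ is closed under countable coproducts in $\operatorname{Diff}(\alg)$: for any family $\{X_\alpha\}$ in ${}^\perp\lE{}$ and any $E \in \lE{}$, one has $\Ext{\operatorname{Diff}(\alg)}{1}{\bigoplus_\alpha X_\alpha}{E} \cong \prod_\alpha \Ext{\operatorname{Diff}(\alg)}{1}{X_\alpha}{E} = 0$, so $\bigoplus_\alpha X_\alpha$ is again in ${}^\perp\lE{}$. Since $\lPrj{Q,\,\alg}$ is likewise closed under coproducts, the quotient ${}^\perp\lE{}/\lPrj{Q,\,\alg}$ inherits coproducts, so the defining triangle of the homotopy colimit actually makes sense in it.

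Next I construct the standard Milnor telescope short exact sequence in $\operatorname{Diff}(\alg)$,
\begin{equation*}
  0 \longrightarrow \bigoplus_{n\geq 1}(\alg^n,\partial_n) \xrightarrow{\ \psi\ } \bigoplus_{n\geq 1}(\alg^n,\partial_n) \longrightarrow (\alg^{(\mathbb{N})},\partial) \longrightarrow 0,
\end{equation*}
where $\psi = 1 - \mathrm{shift}$ sends an element $x$ of the $n$-th summand to $x \in (\alg^n,\partial_n)$ minus $\iota_n(x) \in (\alg^{n+1},\partial_{n+1})$, and the right-hand map is induced by the colimit cone of \lemref{colim}. Exactness is routine since each $\iota_n$ is a direct-summand inclusion on underlying modules (compare the block form in the proof of \lemref{ses}). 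By \prpref{perp-E} and the coproduct closure just established, all three terms lie in ${}^\perp\lE{}$, so this is a conflation in the exact category ${}^\perp\lE{}$ in the sense recalled in the proof of \prpref{conflation}.

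By Happel's passage from conflations in a Frobenius category to distinguished triangles in its stable category \cite[Chap.~I\S2]{Happel}, this conflation induces a distinguished triangle in ${}^\perp\lE{}/\lPrj{Q,\,\alg}$ whose third vertex is $(\alg^{(\mathbb{N})},\partial)$ and whose first map is $1 - \mathrm{shift}$, which is precisely the triangle that defines $\mathrm{hocolim}_{\,n\geq 1}(\alg^n,\partial_n)$. The essential uniqueness of the cone then yields the claimed isomorphism. I expect the only delicate point to be matching $\psi$ with $1 - \mathrm{shift}$ as an endomorphism of $\bigoplus_{n\geq 1}(\alg^n,\partial_n)$ in the stable category, together with the sign bookkeeping; once this is dispatched, the conclusion is immediate.
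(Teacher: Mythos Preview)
Your proposal is correct and follows essentially the same approach as the paper: both produce the Milnor telescope short exact sequence in $\operatorname{Diff}(\alg)$, observe that ${}^\perp\lE{}$ is closed under coproducts so the sequence is a conflation there, and then invoke Happel to obtain the defining triangle of the homotopy colimit. Your map $\psi = 1-\mathrm{shift}$ is exactly the paper's map $\iota$ (written componentwise as $(x_1,x_2,\ldots)\mapsto(x_1,\,x_2-\iota_1(x_1),\,\ldots)$), so the ``delicate point'' you flag about matching $\psi$ with $1-\mathrm{shift}$ and sign bookkeeping is a non-issue.
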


\begin{proof}
  We know from \prpref{perp-E} that $(\alg^n,\partial_n)$ and $(\alg^{(\mathbb{N})},\partial)$ are objects in ${}^\perp\lE{}$. As \smash{$(\alg^{(\mathbb{N})},\partial)$} is the direct limit of \eqref{filtration} in $\operatorname{Diff}(\alg)$ there is a short exact sequence in $\operatorname{Diff}(\alg)$,
\begin{equation}
  \label{eq:lim-ses}
  \xymatrix@C=1.7pc{
    0 \ar[r] 
    & 
    \bigoplus_{n \in \mathbb{N}} (\alg^n,\partial_n)
    \ar[r]^-{\iota}
    & 
    \bigoplus_{n \in \mathbb{N}} (\alg^n,\partial_n)
    \ar[r]
    &    
    (\alg^{(\mathbb{N})},\partial)
    \ar[r]
    & 
    0
  },
\end{equation}  
where $\sigma$ is the morphism given by
\begin{equation*}
  \iota(x_1,x_2,x_3,\ldots) \,=\, (x_1,\,x_2-\iota_1(x_1),\, x_3-\iota_2(x_2),\,\ldots)\;.
\end{equation*}
As the class ${}^\perp\lE{}$ is closed under coproducts in $\operatorname{Diff}(\alg)$, the sequence \eqref{lim-ses} is a conflation in the Frobenius category ${}^\perp\lE{}$. It now follows from Happel \cite[Chap.~I\S2.7]{Happel} that there is a distinguished triangle,
\begin{equation*}
  \xymatrix@C=1.7pc{
    \bigoplus_{n \in \mathbb{N}} (\alg^n,\partial_n)
    \ar[r]^-{\iota}
    & 
    \bigoplus_{n \in \mathbb{N}} (\alg^n,\partial_n)
    \ar[r]
    &    
    (\alg^{(\mathbb{N})},\partial)
    \ar[r]
    & 
    {}
  },
\end{equation*}  
in the stable category of ${}^\perp\lE{}$. By definition, see \cite[Dfn.~1.6.4]{Nee}, this means that $(\alg^{(\mathbb{N})},\partial)$ is the homotopy colimit of the sequence \eqref{filtration} the stable category of ${}^\perp\lE{}$.
\end{proof}

\begin{lem}
  \label{lem:relation}
  For $a,b \in A$ one has $ax+by=0$ if and only if $a \in (x,y)$ and $b \in (x)$.
\end{lem}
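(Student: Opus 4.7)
The plan is a direct computation using the explicit $\Bbbk$-basis $\{1,x,y,y^2,y^3,\ldots\}$ of $\alg$ recorded at the beginning of Appendix~\ref{app:example}, together with the two defining relations $x^2=0$ and $xy=0$.

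First I would write general elements in basis form,
\begin{equation*}
  a \,=\, a_0 + a_1 x + \sum_{i\geqslant 1} c_i y^i
  \qquad \text{and} \qquad
  b \,=\, b_0 + b_1 x + \sum_{i\geqslant 1} d_i y^i
\end{equation*}
with $a_0,a_1,c_i,b_0,b_1,d_i\in\Bbbk$ and only finitely many nonzero coefficients. Using $x^2=0$ and $xy=0$ (hence also $xy^i=0$ for every $i\geqslant 1$), the products collapse to
\begin{equation*}
  ax \,=\, a_0 x
  \qquad \text{and} \qquad
  by \,=\, b_0 y + \sum_{i\geqslant 1} d_i y^{i+1}\;.
\end{equation*}
Since $\{x,y,y^2,y^3,\ldots\}$ is part of a $\Bbbk$-basis of $\alg$, the equation $ax+by=0$ in $\alg$ is equivalent to the vanishing of each coefficient, namely $a_0=0$, $b_0=0$ and $d_i=0$ for every $i\geqslant 1$.

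Finally I would translate these vanishings back: $a_0=0$ says exactly that $a\in(x,y)$ (no constant term), while $b_0=0$ together with $d_i=0$ for all $i\geqslant 1$ says that $b=b_1 x\in(x)$. Conversely, if $a\in(x,y)$ and $b\in(x)$, then $ax\in(x^2,xy)=0$ and $by\in(xy)=0$, so $ax+by=0$ trivially. There is no real obstacle here; the only point requiring care is the observation that $xy^i=0$ for all $i\geqslant 1$, which is what makes the ``hidden'' $y$-tail of $a$ disappear from $ax$ and forces $b$ to be a $\Bbbk$-multiple of $x$.
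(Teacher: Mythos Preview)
Your proof is correct and is precisely the computation the paper has in mind: the paper's own proof simply states that the claim follows easily from the $\Bbbk$-basis $\{1,x,y,y^2,y^3,\ldots\}$ of $\alg$ and the relations $x^2=xy=0$, and you have spelled out exactly this argument.
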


\begin{proof}
  This follows easily from the fact that $\alg$ is free over $\Bbbk$ with basis $\{1,x,y,y^2,y^3,\ldots\}$ and from the relations $x^2=xy=0$ in $\alg$.
\end{proof}

\begin{prp}
  \label{prp:cof-repl}
  There is a weak equivalence in $\operatorname{Diff}(\alg)$,
  \begin{equation*}
     \varphi \colon (\alg^{(\mathbb{N})},\partial) \stackrel{\sim}{\longrightarrow} (\alg,x)
     \qquad \text{given by} \qquad
     (a_1,a_2,a_3,\ldots) \longmapsto y\,a_1\;;
  \end{equation*}  
  so $(\alg^{(\mathbb{N})},\partial)$ is a cofibrant replacement of $(\alg,x)$ in the projective model structure on $\operatorname{Diff}(\alg)$.
\end{prp}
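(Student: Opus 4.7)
The plan is to verify that $\varphi$ is a chain map, compute the homology of both differential modules, show the induced map on homology is an isomorphism, and conclude via \ref{Diff} together with \prpref{perp-E}. Checking that $\varphi$ commutes with the differentials is immediate: the differential on $(\alg, x)$ sends $\varphi(a_1, a_2, \ldots) = y a_1$ to $xy a_1 = 0$, while $\varphi(\partial(a_1, a_2, \ldots)) = y \cdot x a_2 = 0$, because $xy = yx = 0$ in $\alg$.

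On the target, $\Ker{x} = \operatorname{Ann}_\alg(x) = (x, y)$ and $\Im{x} = (x)$, so $\operatorname{H}(\alg, x) = (x, y)/(x)$, canonically isomorphic to $y\alg$ via the inclusion.

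The main work is computing $\operatorname{H}(\alg^{(\mathbb{N})}, \partial)$. The key structural observation, read off from \dfnref{partial} together with the Zeckendorf identity \eqref{o-e}, is that each column of $\partial$ contains at most one nonzero entry: column $j$ carries a single $x$ when $j \in E \setminus \{0\}$ (even Zeckendorf), carries a single $y$ when $j \in O \setminus \{1\}$, and is zero when $j = 1$. Consequently, each coordinate $a_j$ with $j \geq 2$ appears in exactly one row equation, so the defining conditions for $\Ker{\partial}$ can be read off coordinate by coordinate, using \lemref{relation} for the mixed equations of the form $x a_{e(i+1)} + y a_{e(i+1)+1} = 0$; an analogous analysis describes $\Im{\partial}$ coordinate by coordinate. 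A direct check shows that at every position $j \geq 2$ the kernel and image ideals coincide -- both equal $(x, y)$ when $j \in E \setminus \{0\}$ and both equal $(x)$ when $j \in O$ -- so the contribution at these positions vanishes. Only at position $j = 1$ do they differ: the kernel is unconstrained (column $1$ is zero) while the image is $(x)$ (row $1$ lies in $O$), yielding the single nontrivial contribution $\alg/(x)$. Thus $\operatorname{H}(\alg^{(\mathbb{N})}, \partial) \cong \alg/(x)$.

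Finally, the induced map $\operatorname{H}(\varphi) \colon \alg/(x) \to (x, y)/(x)$ is $\overline{a} \mapsto \overline{y a}$; under the identifications $\alg/(x) \cong \Bbbk[y]$ and $(x, y)/(x) \cong y\Bbbk[y]$ this is multiplication by $y$, a $\Bbbk$-linear bijection. Hence $\operatorname{H}(\varphi)$ is an isomorphism, so $\varphi$ is a weak equivalence by \ref{Diff}, and combined with \prpref{perp-E}, which places $(\alg^{(\mathbb{N})}, \partial)$ in ${}^\perp\lE{}$, this exhibits $(\alg^{(\mathbb{N})}, \partial)$ as a cofibrant replacement of $(\alg, x)$. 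The main obstacle is the kernel-image bookkeeping in the third step: one must verify that $E \setminus \{0\}$ and $O \setminus \{1\}$ partition $\{2, 3, 4, \ldots\}$ (via \eqref{o-e}), that each coordinate $a_j$ with $j \geq 2$ is involved in only a single row equation, and that the row-indexed image ideals match the column-indexed kernel ideals at every $j \geq 2$, so that position $1$ is the only surviving contribution.
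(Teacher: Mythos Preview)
Your proof is correct and follows essentially the same route as the paper: verify that $\varphi$ is a morphism of differential modules, compute $\operatorname{H}(\alg,x)=(x,y)/(x)$, show $\operatorname{H}(\alg^{(\mathbb{N})},\partial)\cong\alg/(x)$ concentrated in position~$1$ via the direct-sum decompositions of $\Ker{\partial}$ and $\Im{\partial}$ using \lemref{relation} and the identity \eqref{o-e}, and conclude that $\operatorname{H}(\varphi)$ is multiplication by $y$, hence an isomorphism. Your upfront observation that each column of $\partial$ has at most one nonzero entry is a clean organizing device---it makes transparent both that the image is a genuine direct sum and that the kernel conditions decouple---whereas the paper arrives at the same conclusion by a row-by-row analysis followed by the reindexing $e(i+1)+1=o(\ell)$; but the substance is identical.
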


\begin{proof}
  For every element $(a_1,a_2,\ldots)$ in $\alg^{(\mathbb{N})}$ one has 
  \begin{equation*}
    \varphi\partial(a_1,a_2,\ldots) \,=\,
    \varphi(xa_2,xa_3+ya_4,\ldots) \,=\, yxa_2 \,=\, 0 \,=\, xya_1 \,=\, x\varphi(a_1,a_2,\ldots)\;,
  \end{equation*}
  so $\varphi$ is a morphism of differential $\alg$-modules. As $(\alg^{(\mathbb{N})},\partial)$ is in ${}^\perp\lE{}$ by \prpref{perp-E}, it is a cofibrant object in the projective model structure on $\operatorname{Diff}(\alg)$, see \ref{summary-of-HJ2021}. Below we show
\begin{equation}
  \label{eq:H}
  \operatorname{H}(\alg^{(\mathbb{N})},\partial)
  \,=\, \alg/(x) \mspace{1mu}\oplus\mspace{1mu} 0 \mspace{1mu}\oplus\mspace{1mu} 0 \mspace{1mu}\oplus\mspace{1mu} \cdots\;,
\end{equation}  
and clearly $\operatorname{H}(\alg,x) = (x,y)/(x)$. Thus, the map $\operatorname{H}(\varphi)$ is the map $\alg/(x) \to (x,y)/(x)$ given by multiplication with the element $y$. Since this is evidently an isomorphism, $\varphi$ is a weak equivalence by \ref{Diff}. It remains to prove \eqref{H}, which requires a bit more work:
  
It follows directly from the definition of $\partial$, see \dfnref{partial}, that its image is a direct sum, $\Im{\partial} = \bigoplus_{i \in \mathbb{N}} I_i\mspace{2mu}$, where 
\begin{equation}
  \label{eq:Im}
  I_i \,=\, 
  \left\{\!\!\!
    \begin{array}{ccl}
      (x) & \text{if} & \text{$i$\, has odd Zeckendorf expansion}
      \\
      (x,y) & \text{if} & \text{$i$\, has even Zeckendorf expansion}
    \end{array}
  \right.
  \quad \ (i \in \mathbb{N})\;.
\end{equation}
That is, 
\begin{equation*}
  \Im{\partial} \ = \ I_1 \oplus I_2 \oplus I_3 \oplus I_4 \oplus I_5 \oplus \cdots \ = \  (x) \oplus (x,y) \oplus (x,y) \oplus (x) \oplus (x,y) \oplus \cdots\;.
\end{equation*}
If $(a_1,a_2,a_3,\ldots) \in \alg^{(\mathbb{N})}$ belongs to $\Ker{\partial}$, then the definition of $\partial$ shows that one has
\begin{equation*} 
  \left\{\!\!\!
    \begin{array}{rcl}
      xa_{e(i+1)}\,=\,0 & \text{if} & \text{$i$\, has odd Zeckendorf expansion}
      \\
      xa_{e(i+1)}+ya_{e(i+1)+1}\,=\,0 & \text{if} & \text{$i$\, has even Zeckendorf expansion}
    \end{array}
  \right.
  \quad \ (i \in \mathbb{N})\;.  
\end{equation*}
By \lemref{relation} this means that 
\begin{equation*} 
  \left\{\!\!\!
    \begin{array}{ll}
      a_{e(i+1)+1} \in (x) & \text{for every $i \in \mathbb{N}$ with even Zeckendorf expansion, and}
      \\
      \mspace{17mu} a_{e(i+1)} \in (x,y) & \text{for every $i \in \mathbb{N}$\;.}
    \end{array}
  \right.
\end{equation*}
When $i$ ranges over $\mathbb{N}$, the values $e(i+1)$ ranges over all natural numbers with even Zeckendorf expansion; thus the last condition above is equivalent to saying that $a_{\!j} \in (x,y)$ for every $j \in \mathbb{N}$ with even Zeckendorf expansion. As just mentioned, every $i \in \mathbb{N}$ with even Zeckendorf expansion has the form $i = e(\ell)$ for some $\ell \geqslant 2$, so the first condition above can be phrased as: $a_{e(e(\ell)+1)+1} \in (x)$ for every $\ell \geqslant 2$. By using \eqref{o-e}, this is exactly the same as: $a_{o(\ell)} \in (x)$ for every $\ell \geqslant 2$. When $\ell$ ranges over numbers $\geqslant 2$, the values $o(\ell)$ ranges over all natural numbers $j$ with odd Zeckendorf expansion -- except for $j=1$. Thus the first condition above is equivalent to saying that $a_{\!j} \in (x)$ for every $j > 1$ with odd Zeckendorf expansion. In conclusion: an element \smash{$(a_1,a_2,a_3,\ldots) \in \alg^{(\mathbb{N})}$} belongs to $\Ker{\partial}$ if and only if
\begin{equation*} 
  \left\{\!\!\!
    \begin{array}{ll}
      a_{\!j} \in (x) & \text{for every $j > 1$ with odd Zeckendorf expansion, and}
      \\
      a_{\!j} \in (x,y) & \text{for every $j \in \mathbb{N}$ with even Zeckendorf expansion\;;}
    \end{array}
  \right.
\end{equation*}
note that there is no condition on the element $a_1 \in \alg$. Consequently, the kernel of $\partial$ is a direct sum, $\Ker{\partial} = \bigoplus_{j \in \mathbb{N}} K_j\mspace{2mu}$, where 
\begin{equation}
  \label{eq:Ker}
  K_j \,=\, 
  \left\{\!\!\!
    \begin{array}{ccl}
      \alg & \text{if} & \text{$j=1$}
      \\
      (x) & \text{if} & \text{$j>1$\, has odd Zeckendorf expansion}
      \\
      (x,y) & \text{if} & \text{$j$\, has even Zeckendorf expansion}
    \end{array}
  \right.
  \quad \ (j \in \mathbb{N})\;.
\end{equation}
That is, 
\begin{equation*}
  \Ker{\partial} \ = \ K_1 \oplus K_2 \oplus K_3 \oplus K_4 \oplus K_5 \oplus \cdots \ = \  \alg \oplus (x,y) \oplus (x,y) \oplus (x) \oplus (x,y) \oplus \cdots\;.
\end{equation*}
Comparing \eqref{Im} and \eqref{Ker} we see that $I_1 = (x) \subset \alg = K_1$ while $I_i = K_i$ for every $i>1$. As one has $\Ker{\partial}/\Im{\partial} = \bigoplus_{i \in \mathbb{N}} K_i/I_i$, the equality \eqref{H} follows.
\end{proof}

\begin{rmk}
  \label{rmk:varphi-row}
  Since we view elements in $\alg^{(\mathbb{N})}$ as column vectors, the map $\varphi$ from \prpref{cof-repl} could be written as the $1 \times \mathbb{N}$ matrix $\varphi = [\mspace{2mu}y \ \ 0 \ \ 0 \ \ \cdots]$.
\end{rmk}

\begin{proof}[Proof of \thmref{counterexample}]
  \proofoftag{a} It is immediate from \dfnref{perfect} that $(\alg,x)$ is a strictly perfect object in $\operatorname{Diff}(\alg)$. (In the situation at hand, the category $Q$ has just one object, so every object in $\lMod{Q,\,\alg} \simeq \operatorname{Diff}(\alg)$ actually has finite support.)
  
  \proofoftag{b} Note that there is an epimorphism of differential $\alg$-modules:
\begin{equation*}
  \xymatrix@C=3pc{
  {
  \Fq{*}(\alg) \,=\,
  \big(
  \alg^2\,,
    \left[\begin{smallmatrix} 
       0 & 0 \\ 1 & 0
    \end{smallmatrix}\right] 
  \big)
  }
  \ar@{->>}[r]^-{
    [\begin{smallmatrix} 
       1 & x
    \end{smallmatrix}] 
  } &
  (\alg,x)
  }.
\end{equation*}
By adding this epimorphism to the morphism $\varphi$ from \prpref{cof-repl} we get, in view of \rmkref{varphi-row}, the epimorphism
\begin{equation*}
  \xymatrix@C=7pc{
  {(L,d) \,:=\,
  \Bigg(
  \mspace{-3mu}
  {
  \renewcommand{\arraystretch}{0.85}  
  \begin{array}{c}
    \mspace{5mu}\alg^2
    \\ \oplus \\ 
    \alg^{(\mathbb{N})} 
  \end{array}
  }\mspace{-7mu},
  \left[
  \begin{array}{c|c}
    {\!\!\begin{smallmatrix} 
       0 & 0 \\ 1 & 0
    \end{smallmatrix}\!\!} & 0 \\
    \hline
    0 & \partial
  \end{array}
  \right]
  \Bigg)
  }
  \ar@{->>}[r]^-{\psi \ = \
    [\begin{smallmatrix} 
       1 & x & \!|\! & y & 0 & 0 & \cdots
    \end{smallmatrix}] 
  } &
  (\alg,x)
  }.
\end{equation*}
As $\Fq{*}(\alg)$ is exact and $\varphi$ is a weak equivalence, the morphism $\psi$ is a weak equivalence too. Now, consider in $\operatorname{Diff}(\alg)$ the short exact sequence,
\begin{equation*}
  \xymatrix@C=1.5pc{
    0 \ar[r] & \Ker{\psi} \ar[r] & (L,d) \ar[r]^-{\psi} & (\alg,x) \ar[r] & 0
  }.
\end{equation*}
In the projective model structure on $\operatorname{Diff}(\alg)$ every object is fibrant, see \ref{summary-of-HJ2021}, and hence the fibrations in this model structure are nothing but epimorphisms, see \cite[Dfn.~5.1]{Hovey02}. Thus, $\psi$ is a fibration. Since $\psi$ is also a weak equivalence, it follows from \cite[Lem.~5.8]{Hovey02} that its kernel is exact, that is, $\Ker{\psi} \in \lE{}$. (Using the same method as in the proof of \prpref{cof-repl}, it is not hard to show that $\operatorname{H}(\Ker{\psi})=0$. Some readers may prefer this more direct approach.) Thus, \emph{if} $(\alg,x)$ were an object in ${}^\perp\lE{}$, then $\Ext{\mspace{1mu}\operatorname{Diff}(\alg)}{1}{(\alg,x)}{\Ker{\psi}}=0$ would hold and hence the short exact sequence above would split. But this is impossible; indeed, we now prove that $\psi$ does not have a right inverse (= a section).

Suppose that $\sigma \colon (\alg,x) \to (L,d)$ is a right inverse of $\psi$ in $\operatorname{Diff}(\alg)$. Every $\alg$-linear map $\alg \to \alg^2 \oplus \alg^{(\mathbb{N})}$; in particular, $\sigma$, has the form
\begin{equation*}
  \sigma(a) \,=\, (b'a\,,\, b''a\,,\, b_1a\,,\, b_2a\,,\, \ldots)
  \qquad , \qquad a \in \alg
\end{equation*}
for some fixed elements $b',b'',b_1,b_2,\ldots$ in $\alg$ (of which only finitely many are non-zero). As $\sigma$ is a morphism of differential modules, one has $d\sigma = \sigma x\mspace{1mu}$; in particular:
\begin{equation*}
  (0\,,\, b',\, b_2x\,,\, \ldots) \,=\,
  d(b',\, b'',\, b_1\,,\, \ldots) \,=\, d\sigma(1) \,=\, \sigma(x) \,=\, (b'x\,,\, b''x\,,\, b_1x\,,\, \ldots)\;.
\end{equation*}
By comparing the second coordinates (which is all we need for the argument), we see that $b' = b''x$ holds. As $\sigma$ is a right inverse of $\psi$ we have:
\begin{equation*}
  1 \,=\, \psi\sigma(1) \,=\, \psi(b',\, b'',\, b_1\,,\, b_2\,,\, \ldots) \,=\, b' + b''x + b_1y \,=\, 2b''x+b_1y\;,
\end{equation*}
where the last equality holds by the just established relation $b' = b''x$. But this is impossible since $1$ does not belong to the maximal ideal $(x,y) \subset \alg$.

\proofoftag{c} To prove that $(\alg,x)$ is neither perfect nor compact in $\QSD{Q}{\alg} \simeq \operatorname{Ho}(\operatorname{Diff}(\alg))$, it suffices by \corref{perfect-is-compact-in-D} to argue that $(\alg,x)$ is not compact. To this end, let $\mathcal{T}$ be the stable category of the Frobenius category ${}^\perp\lE{}$ and recall from \ref{amc} that the inclusion functor ${}^\perp\lE{} \to \operatorname{Diff}(\alg)$ induces a triangulated equivalence $\mathcal{T} \to \operatorname{Ho}(\operatorname{Diff}(\alg))$. By \prpref{cof-repl} the objects
$(\alg,x)$ and $(\alg^{(\mathbb{N})},\partial)$ are isomorphic in $\operatorname{Ho}(\operatorname{Diff}(\alg))$ and by \prpref{perp-E} the object $(\alg^{(\mathbb{N})},\partial)$ is in $\mathcal{T}$. Hence $(\alg,x)$ will be a compact object in $\operatorname{Ho}(\operatorname{Diff}(\alg))$ if and only if $(\alg^{(\mathbb{N})},\partial)$ is a compact object in $\mathcal{T}$. We now show that $(\alg^{(\mathbb{N})},\partial)$ is \emph{not} compact in $\mathcal{T}$.

By \corref{hocolim} we have \smash{$(\alg^{(\mathbb{N})},\partial) = \mathrm{hocolim}_{\,n \geqslant 1}(\alg^n,\partial_n)$} in $\mathcal{T}$, so \emph{if} $(\alg^{(\mathbb{N})},\partial)$ is compact, then the canonical homomorphism
\begin{equation}
  \label{eq:canonical-map}
  \textstyle
  \varinjlim_{\mspace{1mu}n \geqslant 1} \Hom{\mathcal{T}}{(\alg^{(\mathbb{N})},\partial)}{(\alg^n,\partial_n)} \longrightarrow
  \Hom{\mathcal{T}}{(\alg^{(\mathbb{N})},\partial)}{(\alg^{(\mathbb{N})},\partial)} 
\end{equation}
is an isomorphism; see Neeman \cite[Lem.~2.8]{MR1308405} or Rouquier \cite[Lem.~3.12]{MR2434186}. In particular, the identity morphism on $(\alg^{(\mathbb{N})},\partial)$ is in the image of the map \eqref{canonical-map}, which implies that:
\begin{equation}
  \label{eq:summand-1}
  \text{In $\mathcal{T}$ the object $(\alg^{(\mathbb{N})},\partial)$ is a direct summand of some $(\alg^n,\partial_n)$\;.}
\end{equation}
Now consider the ring homomorphism $\alg \twoheadrightarrow \alg/(x,y) = \Bbbk$ and the following composition of functors, where the first functor is the inclusion and the third functor is the canonical one:
\begin{equation*}
  \xymatrix@C=2.2pc{
    {}^\perp\lE{} \ar[r] & 
    \operatorname{Diff}(\alg) \ar[r]^-{\Bbbk\mspace{1mu}\otimes_\alg -} &
    \operatorname{Diff}(\Bbbk) \ar[r] &
    \operatorname{Ho}(\operatorname{Diff}(\Bbbk))
  }.
\end{equation*}
The functor $\Bbbk\otimes_\alg-$ preserves projective objects, and every projective object in $\operatorname{Diff}(\Bbbk)$ is exact and hence mapped to zero in $\operatorname{Ho}(\operatorname{Diff}(\Bbbk))$. It follows that the composite functor above induces a functor $\Bbbk\otimes_\alg- \colon \mathcal{T} \to \operatorname{Ho}(\operatorname{Diff}(\Bbbk))$. 
From \eqref{summand-1} we now conclude that 
\begin{equation}
  \label{eq:summand-2}
  \text{In $\operatorname{Ho}(\operatorname{Diff}(\Bbbk))$ the object $(\Bbbk^{(\mathbb{N})},0)$ is a direct summand of some $(\Bbbk^n,0)$\;.}
\end{equation}
Here we have used that $\Bbbk\otimes_\alg\partial=0$ and $\Bbbk\otimes_\alg\partial_n=0$ since the matrices $\partial$ and $\partial_n$ have entries in the ideal $(x,y) \subset \alg$, see \dfnref{partial}. Finally, we apply the homology functor $\operatorname{H} \colon \operatorname{Ho}(\operatorname{Diff}(\Bbbk)) \to \lMod{\Bbbk}$. From \eqref{summand-2} we now conclude that in the category $\lMod{\Bbbk}$ the module \smash{$\Bbbk^{(\mathbb{N})}$} is a direct summand of $\Bbbk^n$ for some $n \geqslant 1$. But this is  impossible.
\end{proof}

\def\cprime{$'$} \def\soft#1{\leavevmode\setbox0=\hbox{h}\dimen7=\ht0\advance
  \dimen7 by-1ex\relax\if t#1\relax\rlap{\raise.6\dimen7
  \hbox{\kern.3ex\char'47}}#1\relax\else\if T#1\relax
  \rlap{\raise.5\dimen7\hbox{\kern1.3ex\char'47}}#1\relax \else\if
  d#1\relax\rlap{\raise.5\dimen7\hbox{\kern.9ex \char'47}}#1\relax\else\if
  D#1\relax\rlap{\raise.5\dimen7 \hbox{\kern1.4ex\char'47}}#1\relax\else\if
  l#1\relax \rlap{\raise.5\dimen7\hbox{\kern.4ex\char'47}}#1\relax \else\if
  L#1\relax\rlap{\raise.5\dimen7\hbox{\kern.7ex
  \char'47}}#1\relax\else\message{accent \string\soft \space #1 not
  defined!}#1\relax\fi\fi\fi\fi\fi\fi} \def\cprime{$'$}
  \providecommand{\arxiv}[2][AC]{\mbox{\href{http://arxiv.org/abs/#2}{\tt
  arXiv:#2 [math.#1]}}}
  \providecommand{\oldarxiv}[2][AC]{\mbox{\href{http://arxiv.org/abs/math/#2}{\sf
  arXiv:math/#2
  [math.#1]}}}\providecommand{\MR}[1]{\mbox{\href{http://www.ams.org/mathscinet-getitem?mr=#1}{#1}}}
  \renewcommand{\MR}[1]{\mbox{\href{http://www.ams.org/mathscinet-getitem?mr=#1}{#1}}}
\providecommand{\bysame}{\leavevmode\hbox to3em{\hrulefill}\thinspace}
\providecommand{\MR}{\relax\ifhmode\unskip\space\fi MR }
\providecommand{\MRhref}[2]{%
  \href{http://www.ams.org/mathscinet-getitem?mr=#1}{#2}
}
\providecommand{\href}[2]{#2}


\end{document}